\documentclass{amsart}

\usepackage[T1]{fontenc}
\usepackage[utf8]{inputenc}
\usepackage[english]{babel}

\usepackage{amsmath, amssymb, amsthm, amsfonts, mathrsfs}

\usepackage[letterpaper,top=2.5cm,bottom=2.5cm,left=3cm,right=3cm,marginparwidth=1.75cm]{geometry}
\usepackage{graphicx}
\usepackage{subcaption}
\usepackage{tikz}
\usepackage{color}

\usepackage{hyperref}
\usepackage{listings}
\usepackage{enumerate}
\usepackage{csquotes}
\usepackage{multirow}
\usepackage{cancel}

\usepackage{tikz-cd}

\newcommand{\Z}{\mathbb{Z}}
\newcommand{\Gal}{\mathrm{Gal}}
\newcommand{\Q}{\mathbb{Q}}
\newcommand{\Aut}{\mathrm{Aut}}
\newcommand{\Jac}{\mathrm{Jac}}

\newcommand{\et}{\mathrm{\acute{e}t}}

\newtheorem{theorem}{Theorem}
\newtheorem{lemma}[theorem]{Lemma}
\newtheorem{proposition}[theorem]{Proposition}
\newtheorem{corollary}[theorem]{Corollary}

\theoremstyle{definition} 

\newtheorem{example}[theorem]{Example}
\newtheorem{remark}[theorem]{Remark}

\usepackage{hyperref}
\usepackage{fancyhdr,textcase}
\title{On periods and Jacobians of Heisenberg curves}

\dedicatory{Dedicated to Professor Jannis A. Antoniadis}

\author{Dimitrios Noulas}

\address{Department of Mathematics, National and Kapodistrian University of Athens, Panepistimioupolis, 15784 Athens, Greece.}
\email{dnoulas@math.uoa.gr}

\begin{document}

\begin{abstract}
Heisenberg curves are cyclic covers of Fermat curves 
that also arise as non-abelian covers of the projective line, 
branched over three points by the 
discrete Heisenberg group modulo an integer. 
As normal Belyi covers, these are curves with many 
automorphisms in the sense of Oort, who questioned whether 
such curves have CM Jacobians. 
In 1986, Ihara proposed using towers of curves to 
study the pro-$\ell$ Galois representation associated 
with the thrice-punctured projective line. To study the kernel of 
this representation, he 
suggested using Heisenberg curves, but it was unknown to him 
at the time whether their Jacobians lacked complex multiplication. 
In this paper, for any odd prime $\ell$, 
we prove that Heisenberg curves of level $\ell^n\neq 3$ 
do not have CM Jacobians. Thus, we resolve the missing part 
of Ihara's original argument 
and in doing so we provide an infinite family of 
new counterexamples to Oort's question. 
\end{abstract}

\maketitle

{\flushleft
\textbf{Keywords:} Heisenberg curves, periods, Jacobians, complex multiplication.\\
\textbf{Mathematics Subject Classification:} 14H40, 14K22, 14H30, 14H37, 11G32.}

%11R58 	Arithmetic theory of algebraic function fields
%14H30 	Coverings of curves, fundamental group
%11R32 	Galois theory
%14E22 	Ramification problems in algebraic geometry

\section{Introduction}

The study of Heisenberg curves can be traced back to $1981$ in the works 
of Ramakrishnan and Murty \cite{RamakrishnanHeis},\cite{RamakrishnanMurtyHeis},
due to the monodromy of the dilogarithm function:
\[ \mathrm{Li}_2(z) = - \int_{0}^z \mathrm{log}(1-t) \frac{dt}{t}, \] yielding the following map
\[ \mathbb{P}^1_{\mathbb{C}}-\{0,1,\infty\} \longrightarrow  \left\{\begin{pmatrix}
    1 & * & * \\
    0 & 1 & * \\ 
    0 & 0 &  1
\end{pmatrix}, \quad * \in \mathbb{C} \right\} (\mathrm{ mod } M) , 
\quad  x \mapsto \begin{pmatrix} 
1 & - \mathrm{log}(1-x) & \mathrm{Li}_2 (x)\\
0 & 1 & \mathrm{log}(x)\\ 
0 & 0 & 1
\end{pmatrix}, \] where
\[ M = \left\{\begin{pmatrix}
    1 & a & c \\
    0 & 1 & b \\ 
    0 & 0 &  1
\end{pmatrix}, \quad a,b \in 2\pi i \Z, \ c \in (2\pi i)^2 \Z \right\}.\] Passing to the modular group $\Gamma(2)$, 
this map induces a homomorphism to the discrete Heisenberg 
group $H_\Z$ of $3\times 3$ unipotent upper triangular integer matrices, given on generators by: 
\[ \begin{pmatrix} 1 & 2 \\ 0 & 1  \end{pmatrix}, \begin{pmatrix} 1 & 0 \\ 2 & 1  \end{pmatrix} \longrightarrow 
 \begin{pmatrix} 1 & 1 & 0 \\ 0 & 1 & 0 \\ 0 & 0 & 1 \end{pmatrix}, 
 \begin{pmatrix} 1 & 0 & 0 \\ 0 & 1 & 1 \\ 0 & 0 & 1 \end{pmatrix},
\]

which fits the following commutative diagram:
\[
\begin{tikzcd}
\Gamma(2) \arrow[r] \arrow[rd] & H_\Z \arrow[d] \arrow[r, "\mathrm{ mod }n"] & H_n \arrow[d]        \\
                               & \Z\times \Z \arrow[r]                       & \Z/n\Z \times \Z/n\Z.
\end{tikzcd}
\] In this setting, let $y_n: \Gamma(2) \rightarrow (\Z/n\Z)^2$ be the natural projection. Then, the compactification 
of $\ker y_n \backslash \mathbb{H}$ is the Fermat curve of exponent $n$. Similarly, a Heisenberg curve arises 
as the compactification of $\ker x_n \backslash \mathbb{H}$ 
for the homomorphism $x_n: \Gamma(2)\rightarrow H_n$ defined above. For a general modular curve given by the compactification
 of 
$\Gamma \backslash \mathbb{H}$, where $\Gamma \subset \mathrm{SL}_2(\Z)$ is a finite-index subgroup, Ramakrishnan and 
Murty were interested in whether the images of the cusps in the associated Jacobian variety were torsion points, 
which property holds true for congruence subgroups by the Manin-Drinfeld theorem. Following Mazur's 
\cite[p. 39]{MazurModularEisenstein} analogy between 
Fermat curves and modular curves, Heisenberg curves can be viewed as covers of Fermat curves analogous to 
the Shimura coverings 
$X_1(N)\rightarrow X_0(N)$ \cite{Banerjee_Merel_2024} 
and can also provide a testing ground for the Manin--Drinfeld principle in the non-congruence setting. Indeed, 
it is proven in \cite{Banerjee_Merel_2024} that for $n=5$, a cusp point has infinite order in the Jacobian, causing the 
principle to fail.

As Heisenberg curves are normal Belyi curves, they play another role in arithmetic geometry regarding a question posed by Oort. 
In the moduli spaces $M_g$, $g\geq 2$, Oort defined a curve {\em with many automorphisms} as one whose corresponding 
point in the moduli space $M_g$ is a local maximum in terms of the size of the automorphism group. 
In fact, curves with many automorphisms correspond to 
normal Belyi curves of genus greater than $2$  (see Remark \ref{remark:many_aut}). 
Reflecting on the philosophy that curves with special geometric properties might also have special arithmetic features, 
Oort questioned whether curves with many automorphisms have Jacobian varieties with complex multiplication. 
The answer to this question turned out to be negative (see Section \ref{subsec:oort}), however, 
the counterexamples so far appeared of small bounded genus and did not seem to represent a general phenomenon.

The Jacobians of Heisenberg curves also attracted the interest of the Japanese mathematician Ihara, 
whose work inspired, and continues to inspire, the research in anabelian geometry both in Japan and worldwide. Section 
\ref{sec:ihara} 
is entirely devoted to his theory, its developments and detailed references.

Let $\ell$ be an odd prime number and 
$\mathcal{F}_2$ be the pro-$\ell$ completion of the free group on two generators 
$F_2 \cong \pi_1(\mathbb{P}^1-\{0,1,\infty\})$, which is realized as the topological fundamental 
group of the thrice-punctured projective line. In $1986$, Ihara initiated the study of the 
pro-$\ell$ Galois representation: %\svs 

\[ \phi: \operatorname{Gal}(\overline{\Q}/\Q) \longrightarrow \mathrm{Out}(\mathcal{F}_2), \] 
where $\mathrm{Out}(\mathcal{F}_2)$ denotes the outer automorphisms of $\mathcal{F}_2$. To obtain 
information about this homomorphism, its image and its kernel, Ihara used a combination of descending 
filtrations of $G_\Q:=\operatorname{Gal}(\overline{\Q}/\Q)$. One type of filtration arises by using the lower central series 
$\mathcal{F}_2(m+1) = [\mathcal{F}_2, \mathcal{F}_2(m)]$ to replace $\mathcal{F}_2$ with the quotient 
$\mathcal{F}_2/\mathcal{F}_2(m+1)$ in $\phi$. This yields the homomorphisms $\phi_m :G_\Q \rightarrow \mathrm{Out}(
  \mathcal{F}_2/\mathcal{F}_2(m+1))$, and the kernels provide the filtration.

The second type of filtration relies on
well-chosen infinite-index subgroups $\mathcal{N}$ of $\mathcal{F}_2$, 
to replace $\mathcal{F}_2$ with $\mathcal{F}_2^* := \mathcal{F}_2/[\mathcal{N},\mathcal{N}]$ and then, as 
previously, to replace $\mathcal{F}_2^*$ with $\mathcal{F}_2^*/\mathcal{F}_2^*(m+1)$, yielding the homomorphisms $\psi_m :G_\Q \rightarrow \mathrm{Out}(
  \mathcal{F}_2^*/\mathcal{F}_2^*(m+1))$ along with their respective kernels. 

The groups $\mathcal{N}$ play the role of {\em towers of curves} in the following sense. 
If $x,y,z$ are the generators of homotopy classes of 
loops around $0,1,\infty$, consider the group 
$\mathcal{N}_{\ell^n} = \mathcal{N}\cdot \langle\langle x^{\ell^{n}},y^{\ell^n},z^{\ell^n} \rangle\rangle$, 
where $\langle\langle . \rangle\rangle$ is the normal and Krull closure in the pro-$\ell$ topology. Then, if $\mathcal{N} = \mathcal{F}_2(2)$ (resp. $\mathcal{F}_2(3)$) 
the group $\mathcal{N}_{\ell^n}$ corresponds 
to a Fermat (resp. Heisenberg) curve of level $\ell^n$, 
and $\mathcal{N}/[\mathcal{N},\mathcal{N}]$ is canonically isomorphic to the 
inverse limit of the $\ell$-adic Tate modules of their respective Jacobians.

A year later in $1987$, before explicitly determining $\ker \phi$ in a collaboration with Anderson \cite{AndersonIhara88}, 
Ihara hinted that with the above machinery on Heisenberg curves, one could possibly deduce 
that the fixed field of $\ker\phi$ is a non-abelian extension of $\Q(\mu_{\ell^\infty})$, where 
$\mu_{\ell^\infty}$ is the group of all $\ell$-power roots of unity. However, he wrote that at the time he did not know
whether the Jacobians of Heisenberg curves for $\ell >3 $ lacked complex multiplication \cite{Ihara3point}.

Our main contribution in this paper is the following theorem, which verifies Ihara's intuition. %\svs

\begin{theorem}\label{thm:intro}
  Fix an odd prime $\ell$ and let $X_{\ell^n}$ be the Heisenberg curve over $\mathbb{C}$ of level $\ell^n$. If $\ell>3$, then the Jacobian 
  $\Jac(X_{\ell^n})$ does not have complex multiplication. If $\ell=3$, the same holds for $\Jac(X_{3^n})$ when $n\geq 2$.
\end{theorem}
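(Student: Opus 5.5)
We would prove this by reducing the CM question to a statement about a single well-chosen quotient of the Jacobian. Throughout we use the standard criterion: $\Jac(X)$ has CM if and only if every simple isogeny factor has CM. The Heisenberg curve $X_{\ell^n}$ is a $\mu_{\ell^n}$-cyclic cover of the Fermat curve $F_{\ell^n}$, the cover with group $Z(H_{\ell^n})\cong \Z/\ell^n\Z$. The Jacobian of $F_{\ell^n}$ is of CM type (Koblitz--Rohrlich), so it contributes nothing.

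The strategy is therefore to decompose $H^0(X_{\ell^n},\Omega^1)$ under the action of $H_{\ell^n}$ and isolate the part on which the center acts through a faithful (primitive) character $\chi$. Concretely, the plan runs as follows.

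\textbf{Step 1: the new part and its Schrödinger representations.} The irreducible representations of $H_{\ell^n}$ with central character of exact order $\ell^n$ are the Schrödinger representations $\rho_\chi$, each of dimension $\ell^n$. The $\chi$-isotypic part of $H^0(\Omega^1)$ is $\rho_\chi^{\oplus m_\chi}$. We compute the multiplicity $m_\chi$ by the Chevalley--Weil formula for the Belyi cover $X_{\ell^n}\to\mathbb{P}^1$, which is branched over three points with inertia generated by $x,y,z$, each of order $\ell^n$. The aim is to show that for the relevant $\chi$ the holomorphic and antiholomorphic multiplicities $m_\chi$ and $m_{\bar\chi}$ are both nonzero, with $m_\chi+m_{\bar\chi}$ equal to the multiplicity in $H^1$.

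\textbf{Step 2: the abelian subvariety and the obstruction to CM.} Take the isotypic abelian subvariety $A_\chi$ (or its Galois orbit over $\Q$) cut out by the idempotent of $\Q[H_{\ell^n}]$. Its endomorphism algebra contains the commutant of the image of $\Q[H_{\ell^n}]$, which is a matrix algebra over $\Q(\zeta_{\ell^n})$ acting with multiplicity. The standard obstruction to CM (as in Shimura's treatment of families with a Hodge structure of mixed type) is this: if the Hodge types of $\chi$ and $\bar\chi$ are both nonzero, the generic member of the corresponding Shimura datum is non-CM. For a rigid Belyi curve, however, the argument must be pointwise.

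\textbf{Step 3: pointwise argument via periods.} Following the title, we compute the periods of $X_{\ell^n}$. The holomorphic forms on the new part can be written explicitly as iterated integrals (hypergeometric-type functions or Beta-function products) over the Fermat curve. A CM abelian variety has period ratios that are algebraic multiples of one another within each Galois orbit: its periods lie in the space of Gamma-products described by the Chowla--Selberg/Deligne results. Our plan is to show that certain period ratios of $A_\chi$ involve double integrals of the form $\int\int x^a(1-x)^b\log$-type terms, in the spirit of $\mathrm{Li}_2$. We would then prove these ratios are transcendental, or at least not of CM type. Failing that, we would fall back on an alternative criterion: show that the Mumford--Tate group of $A_\chi$ is non-commutative, using the Hodge numbers $(m_\chi,m_{\bar\chi})$ together with a dimension count. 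A CM Mumford--Tate torus is too small to contain the representation $\rho_\chi$ acting with unequal signatures compatibly with the $H_{\ell^n}$-action.

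The main obstacle is Step 3, namely converting the Hodge-theoretic data into a genuine non-CM conclusion for one rigid curve. The approach we would try first is to exhibit two distinct simple factors with the same CM field but incompatible CM types. We would also check the exceptional case $\ell^n=3$ in Step 1, where we expect $m_\chi$ or $m_{\bar\chi}$ to vanish; that vanishing would explain why the theorem excludes it and would confirm that the signature computation is the crux.
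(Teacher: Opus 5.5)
Your Steps 1 and 2 follow the paper's set-up: Chevalley--Weil for the three-point cover, the Fermat part discarded by Koblitz--Rohrlich, and attention on the pieces where the center acts non-trivially. But Step 3 is the entire content of the theorem, you do not supply it, and each candidate you list fails.

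\textbf{The signatures are equal.} For $\ell\nmid j$ the Chevalley--Weil formula gives $\chi_{0,j,0}$ multiplicity $(\ell^n-3)/2$ in $H^0(X_{\ell^n},\Omega)$, and the same multiplicity in the antiholomorphic part. At $\ell^n=3$ both vanish simply because there is no new part.

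\textbf{The Mumford--Tate argument points the wrong way.} $H_{\ell^n}$ acts through $\mathrm{End}^0$, the commutant of the Mumford--Tate group, and CM varieties have large endomorphism algebras. Concretely, take $\ell=5$ and a simple abelian surface $A$ with CM by $\Q(\zeta_5)$. Let $H_5$ act on $A^{10}$ via the simple component $\Q[H_5]\to M_5(\Q(\zeta_5))$ of the four $5$-dimensional characters, followed by $X\mapsto\mathrm{diag}(X,\overline{X})\in M_{10}(\Q(\zeta_5))$. Then the tangent space is $\bigoplus_{j=1}^{4}\chi_{0,j,0}$ and $H_1(A^{10},\Q)\cong W^{\oplus 2}$, where $W$ is the rational irreducible representation with $W\otimes\mathbb{C}=\bigoplus_j\chi_{0,j,0}$. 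This is exactly the $H_5$-structure of the new part of $\Jac(X_5)$. So no argument using only Hodge numbers and the group action can exclude CM at this rigid point; this is your own remark that the generic-member argument does not localize.

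\textbf{The other two routes.} Two simple factors with the same CM field and different CM types are no obstruction, since Fermat Jacobians consist of such factors. Transcendence of period ratios is, by Shiga--Wolfart, equivalent to non-CM for abelian varieties over $\overline{\Q}$, so that route restates the problem; in the paper it is a corollary, not an input. Finally, the case $\ell=3$, $n\geq 2$ is not addressed.

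\textbf{What the paper does.} It reduces to level $\ell$ (respectively $9$) through the tower $X_{\ell^m}\to X_{\ell^n}$, using that CM passes to quotient Jacobians. It then runs a dimension squeeze. Let $\mathcal{A}_j^{k_j}$ be the isogeny factor whose differentials are $V_j$, of dimension $\ell(\ell-3)/2$ and character $\frac{\ell-3}{2}\chi_{0,j,0}$. Wolfart's inequality $\dim U\leq k_jq_j$, with $\dim U=\ell$ and $q_j=1$ under CM, gives $\dim\mathcal{A}_j\leq(\ell-3)/2$. The center is used to put $\Q(\zeta_\ell)$ inside $\mathrm{End}^0(\mathcal{A}_j)$, forcing $\dim\mathcal{A}_j\geq(\ell-1)/2$ under CM. For $X_9$ the same is done with the $3$-dimensional $\chi_{1,3,1}$.

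\textbf{Why importing that squeeze would not close your gap.} The $\ell=5$ example above meets every hypothesis except the identification of a single $V_j$ with the differentials of an isogeny factor. That identification cannot hold for $\ell\geq5$. An automorphism of order $\ell$ acting on the tangent space of an abelian variety by the scalar $\zeta^{-j}$ would have a characteristic polynomial on $H_1(\cdot,\Q)$ with only the roots $\zeta^{\pm j}$, and that set is not Galois-stable. Isotypic factors are cut out by rational idempotents, as you correctly anticipated with ``its Galois orbit''. So the $H_\ell$-stable factors of the new part have differentials meeting $V_{j'}\oplus V_{-j'}$ for every $j'$. Two further steps also fail:
\begin{itemize}
\item $\Q(\zeta_\ell)\hookrightarrow M_{k_j}(D_j)$ does not place $\Q(\zeta_\ell)$ inside $D_j$.
\item For $\chi_{1,3,1}$ the center only supplies $\Q(\zeta_3)$.
\end{itemize}
Whichever route you take, the missing ingredient is genuinely arithmetic, for instance Frobenius data at primes of good reduction or an actual transcendence theorem, not further representation theory.
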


The $\ell>3$ and $\ell=3$ cases are treated separetely, and Theorem \ref{thm:intro} 
is the combination of Theorems \ref{thm:main} and \ref{thm:complement}. In proving Ihara's statement, 
we simultaneously provide an infinite family of new counterexamples to the question by Oort about 
curves with many automorphisms. Furthermore, the genus of these counterexamples grows arbitrarily large 
(see the odd part of Equation (\ref{eq:genus})).

We prove Theorem \ref{thm:intro} by combining the theory of Chevalley-Weil \cite{Chevalley1934-eb} regarding the action of the 
automorphism group on the space of holomorphic differentials with the work 
of Wolfart \cite{WolfartCMJacobians}, which specializes this setting to Belyi curves and relates
 the semisimple decomposition of the representations with the isogeny
decomposition of the Jacobian varieties. In doing so, for $\ell>3$ we provide the isogeny decomposition 
(see Corollary \ref{cor:matching_decomps}):
\[ \Jac(X_{\ell}) \sim \Jac(Y_{\ell}) \times \mathcal{A}_1^{k_1} \times \mathcal{A}_2^{k_2} \times \cdots \times \mathcal{A}_{\ell-1}^{k_{\ell-1}}, \]
where $Y_\ell$ is the Fermat curve of level $\ell$ and the $\mathcal{A}_i$ are simple abelian varieties. We then apply
Wolfart's arguments to our study of the space of holomorphic differentials, as well as 
the classification of endomorphism algebras of simple abelian varieties over $\mathbb{C}$ attributed to Albert and Shimura 
\cite{AlbertA},\cite{MR156001}, 
to show that if any $\mathcal{A}_j$ is assumed to have complex multiplication, it leads to a 
contradiction regarding its dimension over $\mathbb{C}$. The full theorem then follows by lifting this 
base case to the entire $\ell$-tower of curves using standard properties of abelian varieties. 
The $\ell=3$ case follows by adjusting these arguments specifically 
for the $X_9$ Heisenberg curve.

We note that the $\ell=2$ case is not considered; a Heisenberg curve of even level $n$ is 
not unique up to isomorphism, and one needs to keep track of the ``coordinate system'' $x,y,z$ of the homotopy 
classes of loops around $0,1,\infty$. This does not happen in the odd case, as a consequence of the classification of the 
automorphism group of Heisenberg curves by Antoniadis and Kontogeorgis \cite{MR4252293}. Even if one keeps 
track of the coordinate system, the discrete Heisenberg group modulo $n$ has a fundamentally different group structure 
in the odd versus even cases. Accommodating this would require substantial modifications throughout the present paper. 
For instance, in the even case Heisenberg curves are no longer unramified covers
of Fermat curves and consequently the previous statement about $\mathcal{N}_{\ell^n}$ is no longer true, 
meaning this group for $\ell=2$ does not 
correspond to a Heisenberg curve.   
Therefore, to avoid these complications, we restrict our focus to the odd primes $\ell$. 

Our second contribution combines a discrete version of the tangential basepoint idea introduced by Deligne with the Fox derivatives--a tool 
Ihara heavily relied on--in order to recompute the Fermat periods as in the appendix by Rohrlich from the 
article \cite{GrossAbelianIntegrals} by Gross. 
Having also constructed a basis of holomorphic differentials, we then generalize this approach 
to compute the periods of Heisenberg curves $X_{\ell^n}$. This method 
offers the advantage of eliminating the need for explicit contour integration and manual tracking of winding numbers. 
Instead, the Fox derivatives and the curve automorphisms perform this task automatically.
Furthemore, transcendence 
results regarding the period matrices are discussed in Remark \ref{remark:periods}, as a consequence of Theorem \ref{thm:intro}. 

Regarding other recent work on Heisenberg curves, the dilogarithm viewpoint has been revisited in \cite{MR3912937}. 
Some parts of our representation-theoretic arguments, 
mainly concerning the automorphism module structure of the homology groups $H_1(X_{\ell^n},\mathbb{C})$, 
rely on previous work of the author in 
\cite{kontogeorgis2024galoisactionhomologyheisenberg}, which we recover here through different means. Finally, 
for more work on 
Heisenberg function field extensions, we refer to \cite{MR4083092}, which builds upon \cite{sharifi1999twisted}.

The paper is organized as follows. In Section \ref{sec:heis_curves} we formally define Heisenberg curves 
and provide an affine model, followed in \ref{subsec:Hn_reps} 
by equations for the characters of the irreducible representations of the Heisenberg 
group $H_n$. In Section \ref{sec:holo}, we study the space of 
holomorphic differentials and its automorphism module structure, providing a basis in \ref{subsec:basis}. 
In Section \ref{sec:periods} we employ Fox derivatives and the tangential basepoint to 
compute the periods. In Section \ref{sec:rep_theory}, we study the 
$H_{\ell^n}$-module structure of $H^0(X_{\ell^n}, \Omega_{\mathbb{C}})$,  
$H^1(X_{\ell^n},\mathbb{C})$ and $H_1(X_{\ell^n},\mathbb{C})$, 
via the Chevalley--Weil theorem. In Section \ref{sec:decomp}, we combine the findings of Section \ref{sec:rep_theory} 
with Wolfart's framework to decompose the Jacobians $\Jac(X_{\ell^n})$ into powers of simple abelian varieties. 
We also provide an alternative decomposition into Jacobians of subcovers 
in \ref{subsec:KaniRosen} using a theorem of Kani and Rosen. In Section \ref{sec:cm}, we prove the $\ell>3$ part of 
Theorem \ref{thm:intro} and prove the $\ell=3$ part in \ref{subsec:ell_three}. Then, we discuss Oort's 
question and previous known counterexamples in \ref{subsec:oort}. In \ref{subsec:streit} we make remarks 
on the possible converse of Streit's CM criterion. Finally, in Section \ref{sec:ihara}, we give a brief review of selected aspects of 
Ihara theory, 
related recent developments and his initial motivation for the question regarding 
Heisenberg Jacobians.
\section*{Acknowledgments}
I am deeply grateful to my advisor, Aristides Kontogeorgis, for 
his constant support and guidance throughout my doctoral studies.
Many thanks also to Ilias Andreou, Miltiadis Karakikes, Orestis Lygdas, 
Konstantia Manousou and Alexios Terezakis for our inspiring conversations 
at the University of Athens. I would also like to thank my collaborators 
Nikita Andrusov, Sevag Büyüksimkeşyan, Fabien Pazuki, 
Mustafa Umut Kazancıoğlu and Jordi Vilà-Casadevall from the paper: 
``Distortion maps for elliptic curves over finite fields'', as that project significantly 
deepened my understanding of complex multiplication theory, leading to the present work.

\section{Heisenberg Curves}\label{sec:heis_curves}

For a positive integer $n$, the term {\em Heisenberg curve} $X_n$ of level $n$ will denote a non-singular projective curve over $\overline{\Q}$ such that, as a normal branched cover of the projective line, the Galois group $\Gal( X_n/\mathbb{P}_{\overline{\Q}}^1)$ is isomorphic to the following group:
\[H_n := \left\{\begin{pmatrix}
    1 & * & * \\
    0 & 1 & * \\ 
    0 & 0 &  1
\end{pmatrix}, \quad * \in \Z/n\Z \right\}.\] The group $H_n$ is referred to as the (discrete) {\em Heisenberg} group 
modulo $n$. By the Riemann Existence theorem, finite branched covers of the Riemann sphere $\mathbb{P}^1_\mathbb C$ correspond to complex algebraic curves. 
The existence of such a curve over $\overline{\Q}$ is then assured by the theorem of 
Belyi \cite{Belyi1}, which asserts that an algebraic curve $Y$ over 
$\mathbb{C}$ has a $\overline{\Q}$-model, if and only if there exists a 
morphism of curves $Y\rightarrow \mathbb{P}^1$ that is branched at most 
three points of $\mathbb{P}^1$. By composing with a Möbius transformation, the 
three points can be assumed to be $0,1,\infty$. The topological fundamental group $\pi_1(\mathbb{P}_{\mathbb{C}}^1-\{0,1,\infty\})$ is free on two generators, as it admits 
a presentation $\langle x,y,z \mid xyz=1\rangle $, where $x,y,z$ represent homotopy classes of loops around the 
branch points $0,1,\infty$. As the group $H_n$ is generated by the two elements
\[\alpha := \begin{pmatrix}
  1 & 1 & 0\\
  0 & 1 & 0 \\
  0& 0 & 1\end{pmatrix}, \quad \beta := \begin{pmatrix}
  1 & 0 & 0\\
  0& 1 & 1 \\
  0& 0& 1\end{pmatrix},\] the kernel of any surjective homomorphism
  \( \pi_1(\mathbb{P}_{\mathbb{C}}^1-\{0,1,\infty\}) \longrightarrow H_n,\) corresponds to a topological cover of the open curves $X_n^\circ(\mathbb{C})\rightarrow \mathbb{P}_{\mathbb{C}}^1-\{0,1,\infty\}$, where $X_n^\circ$ is a Heisenberg curve $X_n$ minus the ramified points, which depends on the choice of the above homomorphism.

  In terms of uniformization, the projective line $\mathbb{P}^1$ with three punctures admits the 
  hyperbolic space $\mathbb{H}$ as its universal cover and \[\Gamma(2)\setminus 
  \mathbb{H} \cong \mathbb{P}^1_\mathbb{C} - \{0,1,\infty\},\]
where \[\Gamma (2) = \left\{ \gamma = \begin{pmatrix}
  a & b \\ 
  c & d 
\end{pmatrix} \in \mathrm{SL}_2(\Z): \ \gamma \equiv \begin{pmatrix} 1 & 0 \\ 0 & 1 
\end{pmatrix} \mod 2 \right\} = \left\langle \begin{pmatrix} 1 & 2 \\ 0 & 1 
\end{pmatrix}, \begin{pmatrix} 1 & 0 \\ 2 & 1 
\end{pmatrix} \right\rangle. \] Furthermore, $\Gamma(2)$ is also freely generated by the two matrices above. Thus any surjective homomorphism $\psi : \Gamma(2) \rightarrow H_n$ yields an open Heisenberg curve as the quotient $X^\circ_n = \ker \psi \setminus \mathbb{H}$. 

To obtain the compactified Riemann surfaces, we uniformize via finite-index subgroups of triangle groups, which are defined as follows.
\[\Delta(k_0,k_1,k_\infty) := \langle \gamma_0,\gamma_1,\gamma_\infty \mid \gamma_0^{k_0} = \gamma_1^{k_1} = \gamma_\infty^{k_\infty} = 
\gamma_0\gamma_1\gamma_\infty = 1 \rangle. \] By considering parabolic elements in $\Gamma(2)$ as elliptic elements of infinite period, we can realize 
$\Gamma(2)$ as $\Delta(\infty,\infty,\infty)$. Thus the quotients $\Delta(k_0,k_1,k_\infty)$ act on $\mathbb{H}$ by fractional linear transformations. 
In particular, for finite $k_i$, the generators $\gamma_i$ act on $\mathbb{H}$ as rotations with angles $2\pi/k_i, i=0,1,\infty$ in the positive sense around their respective fixed points. 
Let $\psi_n$ be a surjective homomorphism
$\Delta(n,n,n) \rightarrow H_n$ if $n$ is odd and $\Delta(n,n,2n)\rightarrow H_n$ 
if $n$ is even. Then $X_n:= \ker \psi_n \setminus \mathbb{H}$ is a Heisenberg 
curve, dependent on $\psi_n$. The reason for this distinction is that as 
$H_n$ is $2$-nilpotent, it holds that $(\alpha \beta)^m = 
\alpha^m \beta^m [\beta,\alpha]^{\binom{m}{2}}$. The center $Z(H_n)$ 
is of order $n$ generated by $c:=[\alpha,\beta] = 
\alpha \beta \alpha^{-1}\beta^{-1}$, thus $(\alpha\beta)$ is of order 
$n$ (resp. $2n$) if and only if $n$ is odd (resp. even). Of course, in the even case similar choices can be made for $\Delta(2n,n,n)$ and $\Delta(n,2n,n)$. 
By the Riemann-Hurwitz formula, the genus $g_n$ of $X_n$ can be calculated as follows.
\[ 2 g_n - 2 = |H_n| \left(1- \frac{1}{k_0} - \frac{1}{k_1} - \frac{1}{k_\infty} \right), \] which yields:
\begin{equation}\label{eq:genus}
g_n = \begin{cases} n^2(n-3)/2 + 1, & \textrm{ if } (n,2)=1, \\
  n^2(n-3)/2 + n^2/4 + 1, & \textrm{ otherwise.}
\end{cases}
\end{equation} See also \cite[Lemma 15]{MR4252293}.

We will restrict to the odd case, as we 
will be interested in Heisenberg curves of level a power of an odd prime 
$\ell$. In the odd case, Antoniadis and Kontogeorgis in \cite[Lemma 16, Thm. 18]{MR4252293} 
prove that the group of automorphism $\Aut (X_n)$ is isomorphic to 
$H_n \rtimes S_3$\footnote{They do prove that every automorphism of 
$X_n$ is modular. Then the claim follows by lifting the $S_3$ 
automorphisms from the Fermat curves and thus the exact sequence in Thm. 
18 is split. It should be noted that the presentation of the kernel $\ker\phi$ in \S 6.2 is 
not precise and should be replaced by 
$\langle a^n, b^n, [a,[a,b]],[b,[a,b]]\rangle$. Then it can be shown that for odd $n$ 
this group is 
invariant by 
the $i_1,i_2$ involutions of $F_2$ generating $S_3$.}. This implies the action of 
$S_3$ on $\mathbb{P}^{1}$ 
as the ''$\lambda$-group'' 
$\{\lambda, 1-\lambda, \frac{1}{\lambda},\frac{1}{1-\lambda}, 
\frac{\lambda}{\lambda-1},\frac{\lambda-1}{\lambda}\}$--which permutes $0,1,\infty$--lifts to an isomorphism of $X_n$. Therefore, any choice of a homomorphism 
$\psi_n: \Delta(n,n,n)\rightarrow H_n, \gamma_i \mapsto \alpha,\beta$ gives the same curve up to isomorphism over $\overline{\Q}$ or $\mathbb{C}$.

We shall thus work with a specific model of level $\ell^n$ of our choice and as long as its Galois group 
over $\mathbb{P}^1$ is isomorphic to $H_{\ell^n}$, we can disregard the choice of the homomorphism 
$\psi_{\ell^n}$ and refer to it as {\em the Heisenberg curve $X_{\ell^n}$}. 
For our equations, we will follow the construction of Hirano and Morishita in 
\cite[Thm. 2.1.4]{MR3912937}, which readily extends from level $\ell$ to level $\ell^n$. See also \cite[Thm. 1]{Banerjee_Merel_2024}.
Let $K=\overline{\Q}(t)$ be the projective $t$-line $\mathbb{P}^1$ over $\overline{\Q}$. We denote by 
$\mathcal{K}_{\ell^n}$ the extension $K(t^{1/\ell^n}, (1-t)^{1/\ell^n})$. The extension $\mathcal{K}_{\ell^n}/K$ is
 Galois and the group $\Gal(\mathcal{K}_{\ell^n}/K)$ is isomorphic to $\Z/\ell^n\Z \times \Z/\ell^n \Z$,
  generated by $\tilde{\alpha},\tilde{\beta}$ defined as follows. Fix $\zeta$ a primitive $\ell^n$-th root of unity, then
\begin{align*} \tilde{\alpha}(t^{1/\ell^n}) &:= \zeta t^{1/\ell^n}, \quad \tilde{\alpha}((1-t)^{1/\ell^n}) =(1-t)^{1/\ell^n}, \\ 
\tilde{\beta}(t^{1/\ell^n}) &:= t^{1/\ell^n}, \quad \tilde{\alpha}((1-t)^{1/\ell^n}) = \zeta (1-t)^{1/\ell^n},
\end{align*}
and is unramified outside $0,1,\infty$. The ramification indices of these points are $\ell^n$. With the function 
field $\mathcal{K}_{\ell^n}$ is associated a non-singular projective model, the Fermat curve $Y_n$ of level $\ell^n$ 
\[ X^{\ell^n}+Y^{\ell^n} = Z^{\ell^n} \textrm{ in } \mathbb{P}^2,\] and the Belyi map 
$Y_n\rightarrow \mathbb{P}^1$ is given by $(X:Y:Z)\mapsto (X^{\ell^n},Z^{\ell^n})$. Set $x=t^{1/\ell^n},y=(1-t)^{1/\ell^n}$ and $\varepsilon = \varepsilon(t)$ such that
\begin{equation}\label{eq:varepsilon_ln}
\varepsilon^{\ell^n} := \prod\limits_{i=1}^{\ell^n-1}(1-\zeta^i x)^i.
\end{equation} Define the extension $\mathcal{R}_{\ell^n} := \mathcal{K}_{\ell^n}(\varepsilon) = \overline{\Q}(x,y,\varepsilon)$. The 
extension $\mathcal{R}_{\ell^n}/\mathcal{K}_{\ell^n}$ is a cyclic Kummer extension of degree $\ell^n$ with Galois group 
$\Gal(\mathcal{R}_{\ell^n}/\mathcal{K}_{\ell^n})$ generated by $c$ defined as $c(\varepsilon) := \zeta \varepsilon$. As we have 
already defined $c$ as an element in $H_{\ell^n}$, we shall show the definitions coincide by showing that 
$\Gal(\mathcal{R}_{\ell^n}/K)$ is isomorphic to $H_{\ell^n}$, which also implies that $\mathcal{R}_{\ell^n}/\mathcal{K}_{\ell^n}$ 
is unramified for our setting with odd primes $\ell$. Firstly, 
to discuss the level $\ell^n=3$ case, in \cite[Example 2.1.3]{MR3912937} a non-singular projective model is given, which is the 
elliptic curve $\zeta^2 X^3 + Y^3 = -\zeta W^3$. Furthermore, in \cite{MR4252293} a different (isomorphic) model is given, the elliptic curve with 
affine equation $Y^2 = X^3+2^3\cdot 3^6$. The two equations represent the same elliptic curve over $\overline{\Q}$ as they both have $j$-invariant $0$
and this curve admits complex multiplication by $(X,Y)\mapsto(\zeta X,Y)$, a fact which will be relevant later.

\begin{proposition}\label{prop:Gal_H_n}
With notations as above, the Galois group $\Gal(\mathcal{R}_{\ell^n}/K)$ is isomorphic to $H_{\ell^n}$, and the extension 
$\mathcal{R}_{\ell^n}/K$ is unramified outside $0,1,\infty$.
\end{proposition}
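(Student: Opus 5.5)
We plan to prove the statement in three steps: show that $\mathcal{R}_{\ell^n}/K$ is Galois, identify its Galois group with $H_{\ell^n}$, and then control the ramification. The first two steps go through extending $\tilde\alpha,\tilde\beta$ to automorphisms of $\mathcal{R}_{\ell^n}$. Write $f:=\prod_{i=1}^{\ell^n-1}(1-\zeta^i x)^i\in\mathcal{K}_{\ell^n}$, so $\varepsilon^{\ell^n}=f$.

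\textbf{Step 1 (Galois closure).} By Kummer theory, $\mathcal{R}_{\ell^n}/K$ is Galois as soon as, for every $\sigma\in\Gal(\mathcal{K}_{\ell^n}/K)$, $\sigma(f)/f$ lies in $(\mathcal{K}_{\ell^n}^\times)^{\ell^n}\cdot\langle f\rangle$. Since $\tilde\beta$ fixes $x$, it fixes $f$. For $\tilde\alpha$ we reindex:
\[
\tilde\alpha(f)=\prod_i(1-\zeta^{i+1}x)^i=\prod_j(1-\zeta^{j}x)^{j-1}=f\cdot\prod_{j}(1-\zeta^jx)^{-1},
\]
where the product runs over $j\in\Z/\ell^n\Z$ and we take exponents mod $\ell^n$. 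Take care here: the term $j=0$ has exponent $\ell^n-1\equiv -1$, and the term $j=0$ does not appear in $f$ itself, which is consistent. Now $\prod_{j}(1-\zeta^jx)=1-x^{\ell^n}=1-t=y^{\ell^n}$. Hence $\tilde\alpha(f)=f\,y^{-\ell^n}$ up to $\ell^n$-th powers. We extend $\tilde\alpha$ by $\varepsilon\mapsto\varepsilon/y$ (up to a harmless constant), and $\tilde\beta$ by $\varepsilon\mapsto\varepsilon$. This shows $\mathcal{R}_{\ell^n}/K$ is Galois of degree $\ell^{3n}$. The degree of $\mathcal{R}_{\ell^n}/\mathcal{K}_{\ell^n}$ is exactly $\ell^n$ because $f$ has order $\ell^n$ in $\mathcal{K}_{\ell^n}^\times/(\mathcal{K}_{\ell^n}^\times)^{\ell^n}$. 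To see this, look at the valuation at the place above $x=\zeta^{-1}$: there $1-\zeta x$ is a uniformizer in $\overline{\Q}(x)$, and this place is unramified in $\mathcal{K}_{\ell^n}/\overline{\Q}(x)$ because $1-t\neq 0$ there. So $f$ has valuation $1$.

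\textbf{Step 2 (identifying the group).} Let $A,B$ be the lifts above and $c:\varepsilon\mapsto\zeta\varepsilon$. Compute the commutator $[A,B]$ on $\varepsilon$: $B$ fixes $\varepsilon$ and sends $y\mapsto\zeta y$, while $A$ sends $\varepsilon\mapsto\varepsilon/y$. The commutator is therefore $\varepsilon\mapsto\zeta^{\pm1}\varepsilon$, i.e. $c^{\pm1}$. Next, $A$ and $B$ commute with $c$, since $c$ acts by a constant scalar on $\varepsilon$ and fixes $x,y$. Finally, $A^{\ell^n}$ acts on $\varepsilon$ by $\prod_k\tilde\alpha^k(y)^{-1}=y^{-\ell^n}\cdot$const, which is a constant root of unity times $\varepsilon$, hence a power of $c$; the same holds for $B^{\ell^n}$. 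By adjusting the constant in the choice of $A$ (or using that $\ell$ is odd, so that the binomial term $\binom{\ell^n}{2}$ is divisible by $\ell^n$), we get $A^{\ell^n}=B^{\ell^n}=1$. The group is generated by $A,B$ (since $c$ is a commutator), has order $\ell^{3n}$, is nilpotent of class $2$ with central commutator of order $\ell^n$, and has generators of order $\ell^n$. Hence it is a quotient of the presentation $\langle a,b\mid a^{\ell^n},b^{\ell^n},[a,[a,b]],[b,[a,b]]\rangle\cong H_{\ell^n}$, and comparing orders gives an isomorphism. We expect this order computation to be the main obstacle, in particular pinning down the constants so that $A^{\ell^n}=1$ exactly; this is where oddness of $\ell$ enters.

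\textbf{Step 3 (ramification).} $\mathcal{K}_{\ell^n}/K$ is unramified outside $0,1,\infty$. For $\mathcal{R}_{\ell^n}/\mathcal{K}_{\ell^n}$, a Kummer extension generated by an $\ell^n$-th root of $f$, ramification can only occur at places where the valuation of $f$ is not divisible by $\ell^n$. The divisor of $f$ is supported on the zeros of $1-\zeta^ix$, which lie over $t=1$, and on the poles of $x$, which lie over $t=\infty$. Hence the extension is unramified outside the places over $0,1,\infty$, proving the claim.
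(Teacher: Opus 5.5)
Your route is the paper's: lift $\tilde\alpha,\tilde\beta$ to $\mathcal{R}_{\ell^n}$, compute the commutator and the orders, and read ramification off the tower $K\subset\mathcal{K}_{\ell^n}\subset\mathcal{R}_{\ell^n}$. Step 3 is fine, but two computations that carry the weight are wrong.

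\textbf{The lift of $\tilde\alpha$ and the order of $A$.} Reducing exponents modulo $\ell^n$ discards the factor $(1-x)^{\ell^n}$, which is not a constant. The exact identity is $\tilde\alpha(f)=f\cdot\bigl(\frac{1-x}{y}\bigr)^{\ell^n}$. Hence $\varepsilon\mapsto\lambda\varepsilon/y$ is not a field homomorphism for any constant $\lambda$, since $(\lambda\varepsilon/y)^{\ell^n}=\lambda^{\ell^n}f/(1-t)\neq\tilde\alpha(f)$. The actual lifts are $A(\varepsilon)=\zeta^m\frac{1-x}{y}\varepsilon$; the paper's $\alpha$ is the case $m=0$. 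Your order computation compounds the error in two ways. First, $y^{-\ell^n}=(1-t)^{-1}$ is not a constant. Second, ``adjusting the constant'' cannot repair anything: any two lifts of $\tilde\alpha$ differ by a power of the central element $c$ of order $\ell^n$, so $(Ac^m)^{\ell^n}=A^{\ell^n}$. If $A^{\ell^n}$ were a nontrivial power of $c$, the Galois group would contain an element of order exceeding $\ell^n$. It then could not be $H_{\ell^n}$, which has exponent $\ell^n$ for odd $\ell$. With the correct lift, the step you single out as the main obstacle is just the identity $\prod_{k=0}^{\ell^n-1}(1-\zeta^k x)=1-x^{\ell^n}=1-t=y^{\ell^n}$. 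It gives $A^{\ell^n}(\varepsilon)=\prod_{k=0}^{\ell^n-1}\tilde\alpha^k\bigl(\frac{1-x}{y}\bigr)\,\varepsilon=\varepsilon$ exactly.

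\textbf{The degree $[\mathcal{R}_{\ell^n}:\mathcal{K}_{\ell^n}]=\ell^n$.} Your proof of this is false. The point $x=\zeta^{-1}$ lies over $t=x^{\ell^n}=1$, so $1-t$ does vanish there. Since $y^{\ell^n}=\prod_k(1-\zeta^k x)$ has a simple zero at $x=\zeta^{-1}$, this place is totally ramified in $\mathcal{K}_{\ell^n}/\overline{\Q}(x)$. At the place above it (the paper's $\beta_1$) one has $v(1-\zeta x)=\ell^n$, hence $v(f)=\ell^n$, not $1$. In fact $(f)=\ell^n D$ with $D=\sum_{i=1}^{\ell^n-1}i\beta_i-\frac{\ell^n-1}{2}\sum_{i=1}^{\ell^n}\gamma_i$, so every valuation of $f$ is divisible by $\ell^n$. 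This is exactly why $\mathcal{R}_{\ell^n}/\mathcal{K}_{\ell^n}$ is unramified. It also means no valuation argument can detect the order of $f$ modulo $\ell^n$-th powers; that would require showing $jD$ is not principal for $0<j<\ell^n$.

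The repair is already in your Step 2. Your Kummer argument correctly shows $\mathcal{R}_{\ell^n}/K$ is Galois, so lifts $A,B$ exist as genuine automorphisms. Then $[A,B]$ fixes $\mathcal{K}_{\ell^n}$ while sending $\varepsilon\mapsto\zeta^{\pm1}\varepsilon$ with $\zeta$ primitive. Thus $\varepsilon$ has $\ell^n$ distinct conjugates over $\mathcal{K}_{\ell^n}$, and the degree is $\ell^n$. This is what the paper's computation $[\alpha,\beta](\varepsilon)=\zeta\varepsilon$ implicitly supplies. With these two repairs, your presentation argument identifying the group with $H_{\ell^n}$ goes through.
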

\begin{proof}
The proof will be similar to the proof of \cite[Thm. 2.1.4]{MR3912937} but for level $\ell^n$. The claim about the ramification follows from the intermediate extensions $\mathcal{R}_{\ell^n}/\mathcal{K}_{\ell^n}$ and $\mathcal{K}_{\ell^n}/K$. 
For $0\leq j < \ell^n$, we have that 
\[ \tilde{\alpha}^j (\varepsilon^{\ell^n}) = \frac{\{ (1-x)\cdots(1-\zeta^{j-1}x)\}^{\ell^n}}{(1-x^{\ell^n})^j} 
\varepsilon^{\ell^n}, \quad \tilde{\beta}^j (\varepsilon^{\ell^n}) = \varepsilon^{\ell^n}.\] Define (again) as 
$\alpha,\beta$ the extensions of $\tilde{\alpha},\tilde{\beta}$ from $\Gal(\mathcal{K}_{\ell^n}/K)$ to $\Gal(\mathcal{R}_{\ell^n}/K)$ by
\begin{align*}
\alpha(x):=\zeta x, \quad \alpha(y):= y,  \quad &\alpha(\varepsilon):= \frac{1-x}{(1-x^{\ell^n})^{1/\ell^n}} \varepsilon = \frac{1-x}{y} \varepsilon, \\
\beta(x):=x, \quad \beta(y) = \zeta y, \quad & \beta(\varepsilon) := \varepsilon.
\end{align*} It is easy to verify that $\beta^\ell=1$. The other case $\alpha^{\ell^n}=1$ happens precisely because $\ell^n$ is odd. Indeed, we need to verify it holds for 
$\alpha^{\ell^n}(\varepsilon)$, which is 
as follows,
\[\alpha^{\ell^n}(\varepsilon) = \frac{1}{1-x^{\ell^n}} \cdot (1-x)(1-\zeta x)\cdots(1-\zeta^{\ell^n-1}x) 
\varepsilon = \zeta^{\binom{\ell^n}{2}} \varepsilon = \varepsilon.\] By computation, we have $[\alpha,\beta](x) = x, 
[\alpha,\beta](y)=y$ and $[\alpha,\beta](\varepsilon) = \zeta \varepsilon$, thus $[\alpha,\beta]=c$. 
Therefore, there is an isomorphism $\Gal(\mathcal{R}_{\ell^n}/K) \rightarrow H_{\ell^n}$ mapping
\[\alpha \mapsto \begin{pmatrix}
  1 & 1 & 0\\
  0& 1 & 0 \\
  0& 0& 1\end{pmatrix}, \quad \beta \mapsto \begin{pmatrix}
  1 & 0 & 0\\
  0& 1 & 1 \\
  0& 0& 1\end{pmatrix},\] and thus our definitions of $\alpha,\beta,c=[\alpha,\beta]$ in $H_{\ell^n}$ and $\Gal(\mathcal{R}_{\ell^n}/K)$ coincide. 
\end{proof}
By our discussion of uniqueness of the $\ell^n$-level Heisenberg curve $X_{\ell^n}$, Proposition \ref{prop:Gal_H_n} 
now asserts that the system $x^{\ell^n}+y^{\ell^n}=1$ along with the defining Equation (\ref{eq:varepsilon_ln}) 
of $\varepsilon^{\ell^n}$ is an affine model for $X_{\ell^n}$ which is what we will work with in the forthcoming sections. 
We close the section with the following remark about curves with many automorphisms.

\begin{remark}\label{remark:many_aut}
An algebraic curve $\mathcal{C}$ over $\mathbb{C}$ of genus $g\geq 2$ is said to have {\em many automorphisms} if its 
corresponding point $\mathfrak{p}$ in the moduli space $\mathcal{M}_g$ has a neighborhood $V$ 
(in the complex topology) such that all curves corresponding to any other point in $V\setminus\{\mathfrak{p}\}$ 
have strictly smaller automorpshism group than $\mathrm{Aut}(\mathcal{C})$. These are not to be confused 
with curves with {\em large automorphism group} $G$, which are curves with $|G|>4(g-1)$. 
There are curves with large automorphism group that do not satisfy the many automorphisms condition. A 
reformulation of the many automorpshisms condition is that $\mathcal{C}$ can be uniformized by a finite index torsion 
free subgroup $\Gamma$ of a hyperbolic triangle group $\Delta(k_0,k_1,k_\infty)$. 
For the equivalence, see Wolfart's \cite[Thm. 6, Lemma 8]{WolfarObviousBelyi}.
As the $\ell^n$-th level Heisenberg 
curve $X_{\ell^n}$ is uniformized by the third term of the lower central series of $\Delta(\ell^n,\ell^n,\ell^n)$, this 
implies each $X_{\ell^n}$ for every odd prime $\ell$ (except for the level $\ell^n=3$ case of genus $1$) is a curve 
with many automorphisms. 

\end{remark}
\subsection{Representation theory of \texorpdfstring{$H_n$}{Hn}} \label{subsec:Hn_reps}
We shall briefly review at this point the decomposition of the regular representation $\mathbb{C}[H_n]$ into 
indecomposables, for all positive integers $n$. The irreducible characters of $H_n$ and the 
indecomposables will become important later when we discuss the Chevalley-Weil theorem as well as the 
decomposition of Jacobians of the Heisenberg curves.

As $H_n$ can be realized as $(\Z/n\Z \times \Z/n\Z) \rtimes \Z/n\Z$, the standard reference is Serre's book \cite[\S 8.2]{SerreLinear} for semi-direct 
products with abelian kernel. The author has performed this procedure in previous work and provides a relevant 
appendix in \cite{kontogeorgis2024galoisactionhomologyheisenberg}, which we will borrow the equations from.

For $j=0,1,\ldots,n-1$ set $d_j = \gcd(n,j)$, then the characters of irreducible representations of $H_n$ are 
denoted by $\chi_{ijs}$, for $i,s=0,1,\ldots,d_j-1$, and are defined as follows. 
\begin{equation}\label{eq:chi_ijs}
\chi_{ijs}( \beta^m c^\lambda \alpha^\mu) = \frac{n}{d_j} \zeta^{ms + \lambda j + \mu i}, \ 
\textrm{ if } {n}/{d_j} \textrm{ divides } \mu \textrm{ and } m,\end{equation} and $0$ otherwise, where $\zeta$ is a primitive $n$-th root of unity and
\[\beta^m \alpha^\lambda c^\mu = \begin{pmatrix}
1 & \lambda & \mu \\
0 & 1 & m \\
0 & 0 & 1
\end{pmatrix} \in H_n.\]
Each character $\chi_{ijs}$ is of dimension $n/d_j$. Thus, to put everything together, the character $\chi_{\mathbb{C}[H_n]}$ of $\mathbb{C}[H_n]$ is the following sum. 
\[\chi_{\mathbb{C}[H_n]} = \sum\limits_{j=0}^{n-1}\sum\limits_{i,s=0}^{d_j-1} \frac{n}{d_j} \chi_{ijs},\]
and the same equation holds over $\overline{\Q}$ or over any field satisfying Maschke's theorem for $H_n$.

\section{The space of holomorphic differentials}\label{sec:holo}

\noindent 
In this section we shall study the space of holomorphic differentials $H^0(X_n, \Omega_{\overline{\Q}})$ as a $\overline{\Q}$-vector space, 
where $n$ is a power of an odd prime, $X_n$ is the $n$-level Heisenberg curve and $\Omega_{\overline{\Q}}$ is the sheaf of 
holomorphic 
differentials defined over $\overline{\Q}$ associated to the curve. The purpose of this is two-fold: first, to explicitly describe the periods of $X_n$. Second, to study it as a 
$\overline{\Q}[H_n]$-module, which will lead to the decomposition of the Jacobian $\mathrm{Jac}(X_n)$.

Recall that we are working with the following affine model for $X_n$ and its function field is denoted by $\mathcal{R}_n$:
\[x^n+y^n=1, \quad  \varepsilon^n = \prod\limits_{i=1}^{n-1}(1-\zeta^ix)^i,\] where $\zeta$ is a primitive $n$-th root of unity.

Based on the defining affine equation $x^n+y^n=1$, we can utilize the well-understood theory of divisors and holomorphic differentials on the $n$-level Fermat curves $Y_n$, 
with function field $\mathcal{K}_n$. We mention the divisors of $\mathcal{K}_n$ we require and for detailed 
computations on them we will 
refer to \cite{Ko:98}.
For the entire section we set $\omega := dx/y^{n-1} = - dy/x^{n-1}$. Then $(\omega)_{\mathcal{K}_n}$ is a canonical divisor of 
$\mathcal{K}_n$. Furthermore, we denote by $\alpha_i,\gamma_i$ the places of $\mathcal{K}_n$ lying above the places $P_{(x=0)}$ and $P_{(x=\infty)}$ respectively, in $\overline{\Q}(x)$. By symmetry, 
the places $\gamma_i$ coincide also with the places above $P_{(y=\infty)}$ in $\overline{\Q}(y)$. Denote also by 
$\beta_i:=P_{(x=\zeta^{-i},y=0)}$ the places above $P_{(y=0)}$. We have the following divisors of the $n$-level Fermat curve.
\[ (x)_{\mathcal{K}_n} = \sum\limits_{i=1}^n \alpha_i - \sum\limits_{i=1}^n \gamma_i, \quad 
(y)_{\mathcal{K}_n} = \sum\limits_{i=1}^n \beta_i - \sum\limits_{i=1}^n \gamma_i,\] \[(dx)_{\mathcal{K}_n} = (n-1)\sum\limits_{i=1}^n \beta_i - 2\sum\limits_{i=1}^n \gamma_i, \quad 
(\omega)_{\mathcal{K}_n} = \frac{2g_{\mathcal{K}_n} - 2}{n} \sum\limits_{i=1}^n \gamma_i, \quad 
(1-\zeta^ix)_{\mathcal{K}_n} = n \beta_i - \sum\limits_{i=1}^n \gamma_i,\]
where $g_{\mathcal{K}_n}=(n-1)(n-2)/2$ is the genus of $Y_n$. Set $D=\frac{1}{n} (\varepsilon^n)_{\mathcal{K}_n}$, which is a divisor of 
$\mathcal{K}_n$. 
Based on the above divisors, we can compute $D$ as follows.
\begin{align*} D &= \frac 1n \left( \prod\limits_{i=1}^{n-1} \left(1-\zeta^i x\right)^i \right)_{\mathcal{K}_n}= \frac 1n \sum\limits_{i=1}^{n-1} i\left(1-\zeta^i x\right)_{\mathcal{K}_n}  = \frac 1n \sum\limits_{i=1}^{n-1} n\cdot i\cdot  \beta_i - \frac{n(n-1)}{2n}\sum\limits_{i=1}^n \gamma_i \\
& = \sum\limits_{i=1}^{n-1} i\cdot  \beta_i - \frac{(n-1)}{2}\sum\limits_{i=1}^n \gamma_i =: D_0 - D_\infty,
\end{align*} using the fact that $n$ is odd. Observe that the place $\beta_n = P_{(x=1,y=0)}$, does not appear in $D_0$ which we will have to take into account. Furthermore, for $j=1,\ldots,n-1$ we write the following equation of $jD$ for future reference. 
\begin{equation} \label{eq:jD}
jD = \sum_{i=1}^{n-1}ij \cdot \beta_i - \frac{j(n-1)}2 \sum\limits_{i=1}^n \gamma_i. 
\end{equation} We shall now calculate the dimensions of the Riemann-Roch spaces $L((\omega)+jD)$, where 
$L(E)=\{ f \in \mathcal{K}_n: (f)_{\mathcal{K}_n}+E\geq 0\}$, for any divisor $E$ of $\mathcal{K}_n$. As customary, we denote the dimension of $L(E)$ by $l(E)$. Set 
$E_j := (\omega)_{\mathcal{K}_n}+jD$, for $1\leq j \leq n-1$. Then, by the Riemann-Roch theorem we have that
\[l(E_j)-l\left((\omega)_{\mathcal{K}_n}-E_j\right) = \deg(E_j) - g_{\mathcal{K}_n} + 1,\] and this yields 
\begin{equation}\label{eq:lEj}
l(E_j) = g_{\mathcal{K}_n} -1,
\end{equation}
which is independent of $j$. The above holds, since $D$ is of degree $0$ and $\deg(E_j) = 2g_{\mathcal{K}_n}-2 $. 
Furthermore, $l(-jD)=0$ since $jD$ is not principal. 
Indeed, any non-constant function $f$ in $\mathcal{K}_n$ satisfying $(f)_{\mathcal{K}_n}\geq jD$ must satisfy $(f)_{\mathcal{K}_n}=jD$,
because both are divisors of degree $0$. However, from the cyclic extension $\mathcal{R}_n/\mathcal{K}_n$, 
the minimum positive integer $j$ such that $jD = (f)_{\mathcal{K}_n}$ is $j=n$, a contradiction.

We will now utilize the spaces $L(E_j)$ to decompose $H^0(X_n, \Omega_{\overline{\Q}})$ in the following proposition.

\begin{proposition}\label{prop:H0_vector_spaces}
   For notations as above,
  we have the following isomorphism between $\overline{\Q}$-vector spaces.
  \begin{equation}\label{eq:decomp_holomorphic_differentials}
  H^0(X_n, \Omega_{\overline{\Q}}) \cong H^0(Y_n, \Omega_{\overline{\Q}}) \oplus 
  \bigoplus\limits_{j=1}^{n-1}\varepsilon^j L((\omega)_{\mathcal{K}_n}+jD).
  \end{equation}
\end{proposition}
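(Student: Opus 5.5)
The plan is to use the eigenspace decomposition of the cyclic Kummer extension $\mathcal{R}_n/\mathcal{K}_n$ under the central element $c$, which acts by $c(\varepsilon)=\zeta\varepsilon$. Every element of $\mathcal{R}_n$ is uniquely $\sum_{j=0}^{n-1} f_j \varepsilon^j$ with $f_j\in\mathcal{K}_n$. Writing a differential on $X_n$ as $\eta = f\,\omega$ with $f\in\mathcal{R}_n$, we get $\eta=\sum_j f_j\varepsilon^j\omega$. Since $c$ is an automorphism of $X_n$, it preserves holomorphy, and the pieces $f_j\varepsilon^j\omega$ are the projections of $\eta$ onto the $\zeta^j$-eigenspaces, given by averaging $\frac1n\sum_k \zeta^{-jk}c^k$. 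Hence $\eta$ is holomorphic if and only if each $f_j\varepsilon^j\omega$ is holomorphic. So it suffices to determine, for each $j$, exactly which $f\in\mathcal{K}_n$ make $f\varepsilon^j\omega$ holomorphic on $X_n$.

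Next we compute the divisor of $f\varepsilon^j\omega$ on $X_n$ via the conorm from $\mathcal{K}_n$. The key input is that $\mathcal{R}_n/\mathcal{K}_n$ is unramified, which follows from Proposition \ref{prop:Gal_H_n}. Indeed, the total ramification index over $0,1,\infty$ of $X_n\to\mathbb{P}^1$ is the order of $\alpha,\beta,(\alpha\beta)^{-1}$, namely $n$ since $n$ is odd, and this is already attained in $\mathcal{K}_n/K$. Equivalently, $D$ is an honest integral divisor, which was just verified using that $n$ is odd.

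With no ramification, the conorm $\mathrm{Con}$ satisfies three identities:
\begin{itemize}
\item $(\omega)_{\mathcal{R}_n}=\mathrm{Con}((\omega)_{\mathcal{K}_n})$, since there is no different;
\item $(\varepsilon)_{\mathcal{R}_n}=\frac1n\mathrm{Con}((\varepsilon^n)_{\mathcal{K}_n})=\mathrm{Con}(D)$;
\item $(f)_{\mathcal{R}_n}=\mathrm{Con}((f)_{\mathcal{K}_n})$.
\end{itemize}
Therefore $(f\varepsilon^j\omega)_{\mathcal{R}_n}=\mathrm{Con}\big((f)_{\mathcal{K}_n}+(\omega)_{\mathcal{K}_n}+jD\big)$. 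Since $\mathrm{Con}$ of a divisor is effective if and only if the divisor is effective, $f\varepsilon^j\omega$ is holomorphic exactly when $f\in L((\omega)_{\mathcal{K}_n}+jD)=L(E_j)$.

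For $j=0$ the same criterion gives $f\in L((\omega)_{\mathcal{K}_n})$, that is, $f\omega$ is a holomorphic differential on $Y_n$ pulled back. This gives the summand $H^0(Y_n,\Omega_{\overline{\Q}})$. For $1\le j\le n-1$ it gives $\varepsilon^jL(E_j)\,\omega$, so identifying via $\eta\mapsto\eta/\omega$ yields the stated isomorphism. The sum is direct because the $\varepsilon^j$ are linearly independent over $\mathcal{K}_n$.

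As a sanity check, the dimensions match: by \eqref{eq:lEj}, $g_{\mathcal{K}_n}+(n-1)(g_{\mathcal{K}_n}-1)=n(g_{\mathcal{K}_n}-1)+1$. Riemann--Hurwitz for the unramified degree-$n$ cover gives exactly this as $g_n$. Substituting $g_{\mathcal{K}_n}=(n-1)(n-2)/2$ also reproduces \eqref{eq:genus}.

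The main obstacle is justifying the identity $(\varepsilon)_{\mathcal{R}_n}=\mathrm{Con}(D)$, i.e. the unramifiedness of $\mathcal{R}_n/\mathcal{K}_n$ at the places $\beta_i$ and $\gamma_i$. I would argue it directly from Kummer theory: a place $P$ of $\mathcal{K}_n$ is unramified in $\mathcal{K}_n(\varepsilon)$ iff $n\mid v_P(\varepsilon^n)$. From the computation of $D$, $v_{\beta_i}(\varepsilon^n)=ni$ and $v_{\gamma_i}(\varepsilon^n)=-n(n-1)/2$, both divisible by $n$ because $n$ is odd. This gives the claim without needing the group-theoretic count.
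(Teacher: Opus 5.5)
Your proposal is correct and follows essentially the same route as the paper: you use the $c$-eigenspace projectors to reduce to single summands $\varepsilon^j f\omega$, then pull back $(\omega)_{\mathcal{K}_n}$ and $D$ along the unramified cover, so that holomorphy becomes $f\in L((\omega)_{\mathcal{K}_n}+jD)$. Your Kummer-theoretic check that $n \mid v_P(\varepsilon^n)$ at the $\beta_i$ and $\gamma_i$ (using $n$ odd) is a useful addition, since the paper only asserts that $\pi$ is unramified.
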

Such decompositions between unramified extensions of function fields have been studied before of course. We refer to the work of Tamagawa \cite[Case I]{Tamagawa:51} 
as a guideline for the following proof.
\begin{proof} Denote by $\pi : X_n \rightarrow Y_n$ the unramified morphism of curves, 
  with Galois group $\Gal(X_n/Y_n)=\Gal(\mathcal{R}_n/\mathcal{K}_n) = \langle c \rangle \cong \Z/n\Z$. Since 
  $\mathcal{R}_n/\mathcal{K}_n$ is a cyclic Kummer extension of degree $n$ generated by $\varepsilon$, the space of $\overline{\Q}$-differentials on the Heisenberg curve 
  $X_n$ is spanned by elements 
  of the form $\varepsilon^j f(x,y)dx$, where $j=0,\ldots,n-1$ and $f \in \mathcal{K}_n$. Furthermore, the cyclic generator $c$ decomposes the space of $\overline{\Q}$-differentials 
  into $n$ eigenspaces, since it maps $c(\varepsilon^j) = \zeta^j\varepsilon^j$ and $c(f)=f$, for any $f\in \mathcal{K}_n$. Thus, any (meromorphic) differential $w$ on the Heisenberg curve $X_n$ can be uniquely 
  written as $w = \sum_{j=0}^{n-1} \varepsilon^j w_j$, where the $w_j$ are (meromorphic) differentials on the Fermat curve $Y_n$. 
  
  We can prove that $w$ is a holomorphic differential on $X_n$ if and only if each $\varepsilon^j w_j$ is holomorphic on $X_n$. The converse direction is obvious. 
  For the first direction, consider $c$ as an automorphism of $X_n$ and let $c^*$ denote its pullback, which acts on the space of holomorphic differentials of $X_n$. We can write
  \[\frac{1}{n} \sum\limits_{i=0}^{n-1}\zeta^{-ij}\left(c^*\right)^i(w) = \frac{1}{n}\sum\limits_{i=0}^{n-1} \sum\limits_{\lambda=0}^{n-1} \zeta^{i(\lambda-j)} \varepsilon^\lambda w_\lambda = \varepsilon^j w_j.\]
  Thus, if $w$ is holomorphic, then every $\left(c^*\right)^i(w)$ is also holomorphic and the above equality implies the same for each $\varepsilon^j w_j$.
  We will now work in terms of divisors on both curves $X_n$ and $Y_n$. Since $\pi$ is unramified, the canonical divisor of $X_n$ is exactly the pullback of the canonical divisor of $Y_n$, see for instance 
  \cite[IV, Proposition 2.3]{Hartshorne:77}. This concludes the ($j=0$) component $H^0(Y_n, \Omega_{\overline{\Q}})$ in the desired 
  decomposition (\ref{eq:decomp_holomorphic_differentials}). Furthermore, the pullback of the divisor $D$ in $\mathcal{K}_n$ is as follows. 
  \[\pi^*(D) = \frac{1}{n}\pi^*( (\varepsilon^n)_{\mathcal{K}_n}) = \frac{1}{n} (n\varepsilon)_{\mathcal{R}_n} = 
  (\varepsilon)_{\mathcal{R}_n}.\] As a consequence, the divisor of $\varepsilon^j w_j$ on $X_n$ is exactly the pullback of the divisor 
  $jD+(w_j)_{\mathcal{K}_n}$ from $Y_n$. This component is holomorphic if and only if 
  \[ (w_j)_{\mathcal{K}_n} \geq -jD,\] and since $w_j$ are differentials on $Y_n$, the above inequality describes precisely the space $L((\omega)_{\mathcal{K}_n}+jD)$ on $\mathcal{K}_n$. The decomposition follows.
\end{proof} Note that from equation (\ref{eq:lEj}), we can verify the dimensions match in 
(\ref{eq:decomp_holomorphic_differentials}). Indeed, it holds that
\[g_n= g_{\mathcal{K}_n} + (n-1)(g_{\mathcal{K}_n}-1) = n(g_{\mathcal{K}_n}-1)+1,\] 
for the genus $g_n$ of the Heisenberg curve $X_n$ as in equation (\ref{eq:genus}), for odd $n$. 

Set 
$V_{n,j}:= \varepsilon^j L( (\omega)_{\mathcal{K}_n}+jD)$ for the twisted Riemann-Roch spaces. We now show 
that this decomposition from Proposition \ref{prop:H0_vector_spaces} is $H_{n}$-equivariant.

\begin{proposition} \label{prop:module_decomp} For notations as above, the $\overline{\Q}$-vector space decomposition 
\[H^0(X_n,\Omega_{\overline{\Q}}) \cong H^0(Y_n,\Omega_{\overline{\Q}}) \oplus 
\bigoplus\limits_{j=1}^{n-1} V_{j,n},\] 
is a decomposition of $\overline{\Q}[H_n]$-modules.
\end{proposition}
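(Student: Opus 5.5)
Since the decomposition of Proposition \ref{prop:H0_vector_spaces} is already a direct sum of $\overline{\Q}$-vector spaces, it suffices to show that each summand $V_{n,j}$ (and the $j=0$ summand $H^0(Y_n,\Omega_{\overline{\Q}})$) is stable under the generators $\alpha,\beta$ of $H_n$. The element $c$ is central and $c^*$ acts on $V_{n,j}$ by the scalar $\zeta^j$. So the summands are exactly the $\zeta^j$-eigenspaces of $c^*$ on $H^0(X_n,\Omega_{\overline{\Q}})$, as in the projection formula used in the proof of Proposition \ref{prop:H0_vector_spaces}.

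The first route I would take is purely formal. Because $c$ lies in the center $Z(H_n)$, every $h\in H_n$ satisfies $h^*c^*=c^*h^*$ as operators on differentials. Commuting operators preserve each other's eigenspaces, so $h^*$ maps the $\zeta^j$-eigenspace of $c^*$ into itself. The holomorphic differentials are preserved by automorphisms, so $h^*$ maps $V_{n,j}$ into $V_{n,j}$. The pullback action is contravariant, $(gh)^*=h^*g^*$, and I would either use this right action or pass to $(h^{-1})^*$ to obtain a left module; eigenspace stability is unaffected by this choice.

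As a consistency check, and to have explicit formulas for later sections, I would also compute the action directly on $\varepsilon^j f\,dx$. By the definitions in the proof of Proposition \ref{prop:Gal_H_n}, $\beta$ fixes $\varepsilon$ and $\beta(x)=x$, so $\beta$ sends $\varepsilon^j f\,dx$ to $\varepsilon^j\,\beta(f)\,dx$ with $\beta(f)\in\mathcal{K}_n$. For $\alpha$ we have
\[
\alpha(\varepsilon^j f\,dx)=\varepsilon^j\Bigl(\tfrac{1-x}{y}\Bigr)^{j}\alpha(f)\,\zeta\,dx,
\]
and the factor multiplying $\varepsilon^j$ again lies in $\mathcal{K}_n$. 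Both images therefore lie in $\varepsilon^j\mathcal{K}_n\,dx$, and holomorphicity then forces them into $V_{n,j}$ by the characterization in the proof of Proposition \ref{prop:H0_vector_spaces}. The same computation with $j=0$ shows that $H^0(Y_n,\Omega_{\overline{\Q}})$ is stable, and that its module structure factors through $H_n/Z(H_n)\cong(\Z/n\Z)^2$.

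I expect no real obstacle here. The only point needing care is the convention for how pullback turns into a left $H_n$-action, that is, whether one uses $h^*$ or $(h^{-1})^*$. Eigenspace stability holds under either convention, so this does not affect the argument.
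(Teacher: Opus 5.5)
Your proof is correct, and it takes a different route from the paper.

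The paper checks only the generator $\alpha$, and does so by hand. It computes that $\alpha$ sends $f\varepsilon^j\omega$ to a root of unity times $\varepsilon^j\bigl(\frac{1-x}{y}\bigr)^j f(\zeta x,y)\,\omega$. It then verifies the Riemann--Roch condition on $Y_n$: it pulls back the effective divisor $(f)+(\omega)+jD$ along $\widetilde{\alpha}$ and shows that $\widetilde{\alpha}^*D-D=(n-1)\beta_n-\sum_{i=1}^{n-1}\beta_i=\bigl(\frac{1-x}{y}\bigr)$ is principal.

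Your main argument avoids all of this divisor bookkeeping. The projector $\frac1n\sum_i\zeta^{-ij}(c^*)^i$ from the proof of Proposition~\ref{prop:H0_vector_spaces} identifies $V_{n,j}$ with the entire $\zeta^j$-eigenspace of $c^*$. Centrality of $c$ then makes every $h^*$ preserve these eigenspaces. This treats all of $H_n$ at once and does not depend on the left/right convention. It also exhibits the decomposition as the isotypic decomposition for the center $\langle c\rangle$, which is the viewpoint the paper itself adopts later in Section~\ref{sec:decomp} via Schur's lemma.

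What the paper's computation buys is explicit transformation data: the factor $\frac{1-x}{y}$, and the fact that $D$ moves by a principal divisor under $\widetilde{\alpha}$. These are used concretely for the matrices of $\alpha$ in the $n=7$ example and for $\alpha^*(u)$ in the period computation. Your secondary direct check reaches the paper's conclusion by citing the general fact that automorphisms preserve holomorphic differentials, where the paper verifies the corresponding divisor inequality explicitly.
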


\begin{proof}
Recall the definitions of the elements $\alpha,\beta$ and $c$ of $H_n$ as in the 
proof of Proposition \ref{prop:Gal_H_n}. It suffices to prove that 
$\alpha$ preserves each $V_{j,n}$. Fix a $j$ such that $1\leq j \leq n-1$. Then a holomorphic differential 
corresponding to an element in $V_{j,n}$ is of the form
$f(x,y) \varepsilon^j \omega$ such that $(f)_{\mathcal{K}_n}+(\omega)_{\mathcal{K}_n}+ jD \geq 0$ 
in terms of divisors in $\mathcal{K}_n$. The action of $\alpha$ yields 
the element $ \zeta^{j+1} \cdot \varepsilon^j \frac{(1-x)^j}{y^j} f(\zeta x,y) \omega$, thus we need to prove
\[ j(1-x)_{\mathcal{K}_n} - j(y)_{\mathcal{K}_n} + (f(\zeta x,y))_{\mathcal{K}_n} + (\omega)_{\mathcal{K}_n} + j D\geq 0,\] in 
terms of divisors of $\mathcal{K}_n$. Observe that $(f(\zeta x,y))_{\mathcal{K}_n} = \widetilde{\alpha}^*( f)_{\mathcal{K}_n}$, 
as a pullback of the automorphism $\widetilde{\alpha}$ of the Fermat 
curve. Additionally, $\widetilde{\alpha}$ preserves holomorphicity, that is
\begin{equation}\label{eq:divisors1} 
  \widetilde{\alpha}^* (f)_{\mathcal{K}_n} + \widetilde{\alpha}^*(\omega)_{\mathcal{K}_n} + j \cdot  
\widetilde{\alpha}^*D \geq 0,
\end{equation} and $\widetilde{\alpha}^* (\omega)_{\mathcal{K}_n} = (\zeta \omega)_{\mathcal{K}_n} = (\omega)_{\mathcal{K}_n}$. We compute 
\begin{equation}\label{eq:divisors2} \widetilde{\alpha}^* D = 
  \sum\limits_{i=1}^{n-1}i \cdot \beta_{i+1} -
   \frac{n-1}{2} \sum\limits_{i=1}^{n} \gamma_i 
   = D + \left( \frac{1-x}{y} \right)_{\mathcal{K}_n}, \end{equation} since 
   \[  \widetilde{\alpha}^* D - D = (n-1)\cdot \beta_{n} - \sum\limits_{i=1}^{n-1} \beta_i. \] 
   Combining equations (\ref{eq:divisors1}) and (\ref{eq:divisors2}) yields the desired result.   
\end{proof}

\begin{remark}\label{remark:ell_three}
Note that in the case of $n=3$, the Heisenberg curve $X_3$ is of genus $1$ and therefore the decomposition of Proposition
\ref{prop:H0_vector_spaces} is simply $H^0(X_3, \Omega_{\overline{\Q}}) \cong H^0(Y_3, \Omega_{\overline{\Q}})$. That is, the $j$-indexed spaces 
are trivial in this case, a fact which we will frequently reference.
\end{remark}

The decomposition from Propositions \ref{prop:H0_vector_spaces},\ref{prop:module_decomp} will play a crucial role 
in subsequent sections and in the proof of the main theorem. However, for the 
rest of the section, we will provide a combinatorial argument on how to construct a basis of 
holomorphic differentials of $X_n$. We will only use the basis to be able to fully describe all periods in the next section, 
and to have a concrete understanding of the holomorphic differentials. The constructed basis will not be 
required in the proof of our main Theorem 
\ref{thm:main}, and readers focused solely in that may safely skip the remainder of this subsection.

\subsection{Constructing a basis of holomorphic differentials} \label{subsec:basis}

We now provide a combinatorial construction for a basis of holomorphic differentials of $X_n$. A basis for the space 
$H^0(Y_n,\Omega_{\overline{\Q}})$ of the Fermat curve $Y_n$ consists of the differentials $x^\mu y^\lambda \omega$, such that $\mu,\lambda \geq 0$ and 
$\mu+\lambda \leq (2 g_{\mathcal{K}_n} - 2)/n = n-3$, which can be seen by the divisors 
$(x)_{\mathcal{K}_n},(y)_{\mathcal{K}_n}$ and $(\omega)_{\mathcal{K}_n}$ of $\mathcal{K}_n$. 

By Proposition \ref{prop:H0_vector_spaces}, it suffices to complement the Fermat basis 
with a basis for each Riemann-Roch space $L((\omega)+jD)$ for $j=1,\ldots,n-1$, where $\omega = dx/y^{n-1}$ and $D=\frac{1}{n}(\varepsilon^n)_{\mathcal{K}_n}$.

To span the $j=1$ component, we will define a set of differentials $\mathcal{B}^{(1)} = \mathcal{B}^{(1)}_1 \cup \mathcal{B}^{(1)}_2$ consisting 
of pure monomial functions and some specific rational functions respectively. Set $S:=(n-5)/2$, then these are defined as follows.
\[\mathcal{B}^{(1)}_1 := \{ \varepsilon x^\mu y^\lambda \omega \mid \quad \mu,\lambda \geq 0, \ \mu+\lambda \leq S\}.\]
To construct the set of rational functions, we define $f_N:= y^{-N} \prod_{i=0}^{N-1}(x-\zeta^{-i})$ for $1\leq N \leq n-1$, and set: 
\[\mathcal{B}^{(1)}_2 := \{\varepsilon x^\nu f_N \omega \mid \quad 1\leq N \leq n-1, \quad 0 \leq \nu \leq \min\{S,n-N-1\}\}.\]

The elements of $\mathcal{B}^{(1)}_1 $ arise by multiplying the Fermat basis by $\varepsilon$. By considering the new 
bound at infinity from the divisor $D_\infty$, these remain holomorphic if and only if $\mu+\lambda \leq S$. Thus we have 
$|\mathcal{B}^{(1)}_1| = (S+1)(S+2)/2$ holomorphic differentials.

Since these are insufficient to span the entire twisted Riemann-Roch space, the elements of $\mathcal{B}^{(1)}_2$ 
complement this, by introducing differentials with controlled poles. According to the zero divisor $D_0$ of $D$, we are permitted up to $i$ poles at 
$\beta_i=P_{(x=\zeta^{-i},y=0)}$ for $1 \le i \le n-1$, while requiring no poles at 
$\beta_n = P_{(x=1,y=0)}$. In the definition of $f_N$, each factor $(x-\zeta^{-i})$ contributes 
$n$ zeros at $\beta_i$, while $1/y$ contributes a simple pole at every $\beta_i$, keeping us within the 
permitted bounds. The factor $x^\nu$ is now introduced to exhaust the possible bound, up to $S$, for the poles at infinity. 
Thus $\nu \geq 0$ is capped at $\min\{S,n-N-1\}$ to avoid linear dependence in the numerators of the differentials, 
arising from the relation 
$x^n+y^n=1$.

Therefore, to properly count the differentials of $\mathcal{B}^{(1)}_2$, we have to consider the two cases 
$N \in  \{1, \dots, n-S-1\}$ and $N \in \{n-S, \dots, n-1\}$. For all $N$ in the former case, there are 
$S+1$ choices for $\nu$, yielding $(n-S-1)(S+1)$ differentials. For all $N$ in the latter case, the bound for $\nu$ is $n-N-1$, 
thus the number of possible values for $\nu$ decreases as $N$ increases. This yields another $\sum_{i=1}^S i= S(S+1)/2$ 
differentials. 

Combining the sizes of $\mathcal{B}^{(1)}_1$ and $\mathcal{B}^{(1)}_2$, we get exactly 
$|\mathcal{B}^{(1)}| = g_{\mathcal{K}_n} - 1$ differentials. 
Thus $\mathcal{B}^{(1)}$ spans the entire twisted Riemann-Roch subspace $V_{1,n}$.

We adapt this construction now for the $j\geq 2$ components based on Equation (\ref{eq:jD}). 
This permits $ij$ poles at each $\beta_i$, except for $\beta_n$ where no poles are allowed. Twisting by $\varepsilon^j$ 
sets the bound for poles at infinity to be $S_j:= n-3-j(n-1)/2$, which is strictly negative for $j\geq 2$. 
Thus no monomial differentials are allowed in this construction. 
As a consequence, we will build the basis \[ \mathcal{B}^{(j)} := \bigcup\limits_{N=0}^{n-1} \mathcal{B}^{(j)}(N),\] where the sets 
$\mathcal{B}^{(j)}(N)$ will consist entirely of rational functions, indexed by the power
 $N$ of $y$ in the denominator, for $0\leq N \leq n-1$. To compensate for the negative $S_j$ bound, we have to make use 
 of our allowed poles at 
 $\beta_i$ to create zeros at infinity. We do this by allowing factors $(x-\zeta^{-i})$ to be in the denominator. 

Define the pole capacity $c_i$ of $\beta_i$ according to Equation (\ref{eq:jD}) as:
\[c_i = c_i(N) := \max\{\kappa \in \mathbb{Z}_{\ge 0} \mid ji \ge \kappa n + N\} = \left\lfloor \frac{ji - N}{n} 
\right\rfloor, \quad c(N) := \sum_{i=1}^{n-1} c_i(N).
\] Each value $c_i$ dictates the maximum power of $(x-\zeta^{-i})$ permitted in the denominator alongside $y^N$. Set also
\[
d_N := \#\{i \in \{0,\dots,n-1\} : N > ij\}.
\] The value $d_N$ is the 
numerator degree required to cancel the unwanted poles introduced by $1/y^N$. 

Let $D_k, 0 \le k \le c_N$ be the partial product formed by taking exactly the first $k$ linear factors of 
the polynomial $D_{\max} := \prod_{i=1}^{n-1} (x - \zeta^{-i})^{c_i(N)}$ when expanded strictly in order from 
$i=1$ to $n-1$ (with $D_0 := 1$). Define the base rational functions $f_N := y^{-N} \prod_{i=0}^{d_N-1} (x - \zeta^{-i})$ 
and $M_N := d_N - N - S_j$. The basis elements $\mathcal{B}^{(j)}(N)$ are defined as follows:

\begin{align*} \textrm{If } M_N > 0, \textrm{ set } \mathcal{B}^{(j)}(N) 
  &:= \left\{ \frac{\varepsilon^j f_N \omega}{D_k} \ \middle|\ M_N \le k \le c_N \right\}, \\
\textrm{otherwise, } \mathcal{B}^{(j)}(N) 
&:= \left\{ \varepsilon^j x^\nu f_N \omega \ \middle|\ 1 \le \nu \le |M_N| \right\} 
\cup \left\{ \frac{\varepsilon^j f_N \omega}{D_k} \ \middle|\ 0 \le k \le c_N \right\}.
\end{align*}

%For $N=0$, we define $1/f_0$ by taking 
%the product of the first $|S_j|$ available denominator factors (counting multiplicities up to capacity $c_i$) to satisfy 
%the infinity bound. We can then scale the denominator further, by sequentially appending additional permitted denominator factors up to the capacity $c_N$, rather than scaling the numerator. 
%This provides $c_N + S_j + 1$ basis elements.

%For $N \geq 1$, we set $M := d_N - N - S_j$ and define:
%\[
%f_N := \begin{cases} \frac{\prod\limits_{i=0}^{d_N-1}(x-\zeta^{-i})}{y^N \prod\limits_{i=1}^{\deg=M} (x-\zeta^{-i})}, & \text{if } M > 0, \\
%  & \\  \frac{\prod\limits_{i=0}^{d_N-1}(x-\zeta^{-i})}{y^N}, & \text{if } M \le 0. \end{cases}
%\]
%If $M \ge 0$, we utilize $M$ denominator factors to satisfy the infinity bound, leaving $c_N - M$ available scalings. 
%If $M < 0$, the numerator degree already satisfies the infinity bound, allowing us to approach the bound by scaling 
%the numerator by $x^\nu$ (for $\nu \le |M|$), or to scale the denominator utilizing the capacity $c_N$. 
In both cases, $\mathcal{B}^{(j)}(N)$ has exactly $c_N - M_N + 1$ elements. Substituting $M_N$, we have in total:
\[
\sum_{N=0}^{n-1} (c_N - d_N + N + S_j + 1) = \frac{1}{2}n(n-3)= g_{\mathcal{K}_n}-1,
\] elements for each $j$-component, as required. We shall showcase that the sum of $c_N - d_N$ is equal to $n(n-1)(j-2)/2$ from 
which the above equality follows by computation.
Using the Iverson bracket $[P]$ to denote $1$ if a proposition $P$ is true and $0$ otherwise, we evaluate the sum of $d_N$:
\[
\sum_{N=1}^{n-1} d_N = \sum_{N=1}^{n-1} \sum_{i=0}^{n-1} [N > ij] = \sum_{i : ij \leq n-1} (n - 1 - ij).
\]
Furthermore, using Hermite's identity, the sum over $c_N$ becomes:
\[
\sum_{N=0}^{n-1} c_N = \sum_{i: ij \geq n}^{n-1}\sum_{N=0}^{n-1}  \left\lfloor \frac{ij - N}{n} \right\rfloor = \sum_{i: ij \ge n}^{n-1} \sum_{\kappa=0}^{n-1} \left\lfloor \frac{(ij - n + 1) - \kappa}{n} \right\rfloor = \sum_{i: ij \ge 0} (ij - n + 1).
\]
Since $d_0 = 0$, the difference is:
\[
\sum_{N=0}^{n-1} (c_N - d_N) = \sum_{i=0}^{n-1} (ij - n + 1) = (j - 2)\frac{n(n+1)}{2},
\] as desired.

Therefore, if $\mathcal{B}^{(0)}$ denotes the Fermat holomorphic differentials, a basis $\mathcal{B}$ for the holomorphic differentials of the Heisenberg curve $X_n$ is 
\[ \mathcal{B} := \bigcup\limits_{j=0}^{n-1} \mathcal{B}^{(j)}.\]
We will showcase the entire construction in the following two examples.

\noindent \textbf{Example for $\mathbf{n=5}$:} The $6$ holomorphic differentials that are lifted from the Fermat curve are 
\[\omega, x\omega,x^2\omega,y\omega,y^2\omega ,xy\omega,\] and the rest of the basis of $H^0(X_n,\Omega_{\overline{\Q}})$ appears in the following table:
\begin{table}[h!]
\centering
\setlength{\tabcolsep}{2pt}
\renewcommand{\arraystretch}{1.5}
\caption{Basis of holomorphic differentials for $n=5$ across the $j=1$ to $j=4$ components.}
\label{table:1}
\begin{tabular}{|l|l|l|l|}
\hline
$j=1$ & $j=2$ & $j=3$ & $j=4$ \\
\hline
 $ \varepsilon \omega$ & $\frac{1}{(x - \zeta)(x - \zeta^2)} \varepsilon^{2} \omega$ & $\frac{1}{(x-\zeta)^2(x-\zeta^2)(x-\zeta^3)}\varepsilon^3 \omega$ &  $\frac{1}{(x-\zeta)^3(x-\zeta^2)^2(x-\zeta^3)}\varepsilon^4 \omega$\\
 $\frac{(x - 1)}{y} \varepsilon \omega$ & $\frac{(x - 1)}{y(x - \zeta)(x - \zeta^{2})} \varepsilon^{2} \omega$ & $\frac{(x-1)}{y(x-\zeta)^2(x-\zeta^2)(x-\zeta^3)}\varepsilon^3 \omega$ &  $\frac{(x-1)}{y(x-\zeta)^3(x-\zeta^2)^2(x-\zeta^3)}\varepsilon^4 \omega$\\
 $\frac{(x - 1)(x - \zeta^{-1})}{y^{2}} \varepsilon \omega$ & $\frac{(x - 1)}{y^{2}(x - \zeta)} \varepsilon^{2} \omega$ & $\frac{(x-1)}{y^2(x-\zeta)^2(x-\zeta^2)}\varepsilon^3 \omega$ & $\frac{(x-1)}{y^2(x-\zeta)^2(x-\zeta^2)^2(x-\zeta^3)}\varepsilon^4 \omega$ \\
 $\frac{(x - 1)(x - \zeta^{-1})(x - \zeta^{-2})}{y^{3}} \varepsilon \omega$ & $\frac{(x - 1)(x - \zeta^{-1})}{y^{3}(x - \zeta)} \varepsilon^{2} \omega$ & $\frac{(x - 1)}{y^{3}(x - \zeta)(x - \zeta^2)} \varepsilon^{3} \omega$ &  $\frac{(x-1)}{y^3(x-\zeta)^2(x-\zeta^2)(x-\zeta^3)}\varepsilon^4 \omega$\\
 $\frac{(x - 1)(x - \zeta^{-1})(x - \zeta^{-2})(x - \zeta^{-3})}{y^{4}} \varepsilon \omega$ & $\frac{(x - 1)(x - \zeta^{-1})}{y^{4}} \varepsilon^{2} \omega$ & $\frac{(x - 1)(x - \zeta^{-1})}{y^{4}(x - \zeta)(x - \zeta^{2})} \varepsilon^{3} \omega$ & $\frac{(x-1)}{y^4(x-\zeta)^2(x-\zeta^2)}\varepsilon^4 \omega$ \\[2ex]
\hline
\end{tabular}
\end{table}

\noindent \textbf{Example for $\mathbf{n=7}$:}
Let us showcase the $H_n$-action on the holomorphic differentials corresponding to the component $\varepsilon L((\omega)_{\mathcal{K}_n}+D)$, for $n=7$. Denote by $f_N$ the 
rational functions 
\[f_N =  1/y^N \prod_{m=0}^{N-1}(x-\zeta^{-m}), \quad 1\leq N \leq 6.\] The $14$ basis elements of the $j=1$ component can be partitioned into two sets of $7$ by:
\[ A_1 = \{\varepsilon \omega, f_1 \varepsilon \omega, f_2\varepsilon \omega ,\ldots, f_6 \varepsilon \omega\}, \  \textrm{ and }  
\ A_2 = \{x\varepsilon \omega, y\varepsilon \omega, xf_1\varepsilon \omega ,\ldots, xf_5 \varepsilon \omega\}.\]
Each of the two sets spans a $\overline{\Q}$-vector space that is $H_7$-invariant. To see this, recall the definitions of the generators 
$\alpha,\beta$ from the proof of Proposition \ref{prop:Gal_H_n}. It suffices to verify the action of $\alpha$. Let 
$\nu_1 = \varepsilon\omega(1,f_1,\ldots,f_6)$ be the row vector of the elements in $A_1$. The action of $\alpha$ on this vector can be represented as follows.
\[\alpha(\nu_1) = \nu_1 \cdot 
\left(
\begin{array}{c|c}
  0 & 1 \\
  \hline
  -\zeta I_{6\times 6} & 0
\end{array}
\right),\]where $I_{6\times 6}$ is the indentity matrix.

For the second part, the action of $\alpha$ on $xf_5 \varepsilon \omega$ produces $xf_6\varepsilon \omega$. However, this remains in the span 
of $A_2$ as $xf_6 \varepsilon \omega = (y-\zeta f_6)\varepsilon \omega$. Thus, we write $\nu_2 = x\varepsilon\omega(1,f_1,\ldots,f_6)$ and similarly we have that 
\[\alpha(\nu_2) = \nu_2 \cdot 
\left(
\begin{array}{c|c}
  0 & -\zeta \\
  \hline
  D_{6\times 6} & 0
\end{array}
\right),
\] where $D_{6\times 6}= - \mathrm{diag}(\zeta^{2},\zeta^{3},\zeta^{4},\zeta^{5},\zeta^{6},-1)$. Overall, we see that the $j=1$ component is 
$H_7$-invariant, but not irreducible. However, since the irreducible representations of $H_7$ strictly have dimensions $1$ or $7$, recall equation 
(\ref{eq:chi_ijs}), the two subspaces spanned by $A_1$ and $A_2$ respectively are irreducible $\overline{\Q}[H_7]$-modules.

\section{Periods}\label{sec:periods}
In this section, we provide a convenient method 
for calculating the periods of the Heisenberg curves $X_{\ell^n}$. 
For any holomorphic differential $u$ in $H^0(X_{\ell^n}, \Omega_{\mathbb{C}})$ 
determined previously in \S\ref{subsec:basis}, these periods are given by the integrals:
\[ \int\limits_{ \gamma}u, \quad \gamma \in H_1(X_{\ell^n},\Z).  \] 
In order to obtain a proper 
period matrix, one initially requires a basis for 
$H_1(X_{\ell^n},\Z)$, and a change of basis to make it 
symplectic. 
A classical approach to obtain a basis is to use the 
Reidemeister-Schreier method
\cite[Thm. 9.1]{bogoGrp} 
(see also \cite{kontogeorgis2024galoisactionhomologyheisenberg} 
for this specific application). Additionally, alternative homology bases are provided 
in \cite{Banerjee_Merel_2024}. Furthermore, Streit's algorithm in 
\cite{StreitSymplecticRepresentations} can be used directly to computationally 
generate a 
symplectic basis for any Heisenberg curve $X_{\ell^n}$, from which one would construct 
the corresponding period matrix. 

To be able to compute periods, we will follow the method from the Appendix by Rohrlich in the article of Gross 
\cite{GrossAbelianIntegrals} for the case of Fermat curves. However, Rohrlich uses explicit contour integration for the paths in the homology of the Fermat curve, which procedure can 
become very complicated on Heisenberg curves. 
Thus, we will firstly generalize the approach by Rohrlich using Fox calculus on free groups. The Fox derivatives, as these will be defined later, 
will automate the tracking of winding numbers around the punctures $0$ and $1$ and make the contour integration ``an algebraic procedure'', 
by encoding it as an element in an automorphism group ring. For instance, we can recover the Fermat periods with ease in Equation (\ref{eq:fermat_periods}), from Equation (\ref{eq:period}). 
Our procedure will easily generalize to Heisenberg curves and we will be able to write any period arising from any holomorphic differential, integrated over any 
generator of a symplectic homology basis.

First, recall that the topological fundamental 
$\pi_1(\mathbb{C}-\{0,1\},z_0)$ is freely generated by two elements, say 
$l_\alpha$ and $ l_\beta$, which correspond to the 
homotopy classes of small, positively oriented loops 
around the punctures $0$ and $1$. We assume the basepoint $z_0$ 
lies in the interval 
$(0,1)$. Let $X_{\ell^n}^\circ$ denote the affine Heisenberg curve which is a topological 
cover of $\mathbb{C}-\{0,1\}$. For any point $P_0$ above $z_0$, 
the image of $\pi_1(X_{\ell^n}^\circ,P_0)$ in 
$\pi_1(\mathbb{C}-\{0,1\},z_0)$ is the normal closure of the 
subgroup generated by the 
elements $l_\alpha^{\ell^n},l_\beta^{\ell^n}, \ [l_\alpha, [l_\alpha,l_\beta]]$ 
and $[l_\beta, [l_\alpha,l_\beta]]$. 

Passing to the 
homology $H_1(X_{\ell^n},\Z)$ of the projective curve $X_{\ell^n}$, 
the elements $l_\alpha^{\ell^n},l_\beta^{\ell^n}$ become 
trivial. 
Group-theoretically, this occurs 
because uniformizing the projective curve requires 
passing to a subgroup of the triangle group 
$\Delta(\ell^n,\ell^n,\ell^n)$, as in \S\ref{sec:heis_curves}, where these corresponding 
elements appear as relations. Furthermore, $H_1(X_{\ell^n},\Z)$ is 
generated as a $\Z[H_{\ell^n}]$-module by $[l_\alpha, [l_\alpha,l_\beta]]$ 
and $[l_\beta, [l_\alpha,l_\beta]]$. If 
$\ell^n \neq 3$, this set of generators is minimal; however, 
for the $X_3$ curve a single generator suffices. 
See \cite[Prop. 16]{kontogeorgis2024galoisactionhomologyheisenberg} and 
compare this with Theorem \ref{thm:intro}. 
Thus, $H_1(X_{\ell^n},\Z)$ is spanned by elements of the form:
\begin{align}\label{eq:span_homology}
l_\beta^s [l_\alpha,l_\beta]^j l_\alpha^i \cdot [l_\alpha, [l_\alpha,l_\beta]] \cdot (l_\beta^s [l_\alpha,l_\beta]^j l_\alpha^i)^{-1}, & \quad  \textrm{ for } 0\leq i,j,s \leq \ell^n-1 \\
l_\beta^s [l_\alpha,l_\beta]^j l_\alpha^i \cdot [l_\beta, [l_\alpha,l_\beta]] \cdot (l_\beta^s [l_\alpha,l_\beta]^j l_\alpha^i)^{-1}. & \nonumber
 \end{align} 
 These elements are precisely the images of the base paths 
 $\gamma_1:=[l_\alpha, [l_\alpha,l_\beta]]$ and 
 $\gamma_2:=[l_\beta, [l_\alpha,l_\beta]]$ under the action of 
 the elements
 $\beta^s c^j \alpha^i$ in $H_{\ell^n}$ on $H_1(X_{\ell^n},\Z)$, where the action is given by: 
 \[\beta^s c^j \alpha^i \cdot \gamma = l_\beta^s [l_\alpha,l_\beta]^j l_\alpha^i \cdot \gamma \cdot (l_\beta^s [l_\alpha,l_\beta]^j l_\alpha^i)^{-1}.\]
 
To see why this holds, we can rely on standard covering space theory. Let 
 $X\rightarrow Y$ be a normal finite topological cover of path-connected spaces. If $\pi_1$ is 
 the fundamental group of $Y$ and 
$N$ is the image of the fundamental group of $X$ under the covering map, then the quotient $\pi_1/N$ is naturally 
isomorphic to the group of deck transformations $\mathrm{Deck}(X/Y)$. 

The group $\pi_1/N$ acts on $N$ by outer conjugation. When we pass to the abelianization $N/[N,N]$ 
all inner automorphisms become trivial, making this outer action well-defined. Since $N/[N,N]$ is isomorphic to 
$H_1(X,\Z)$, we get a conjugation action on the homology cycles.

We want to show that this conjugation action matches 
the standard action of a deck transformation $f$ in $\mathrm{Deck}(X/Y)$ on a homology cycle. A natural way 
to see this is by working with the universal covering space $\widetilde{Y}$ of $Y$, 
where we can identify $N$ with $\mathrm{Deck}(\widetilde{Y}/X)$. For any lift 
$f^\prime : \widetilde{Y}\rightarrow \widetilde{Y}$ (corresponding to the representative in $\pi_1/N$) we have that 
$f^\prime \mathrm{Deck}(\widetilde{Y}/X)f^{\prime -1} = \mathrm{Deck}(\widetilde{Y}/f(X))$. Although 
the deck group remains the same, as $f(X)=X$, the points of $X$ have been translated by $f$. Consequently the same translation 
happens to the homology cycles.

To apply this in our setting, we pick $X=X_{\ell^n}^\circ$ and $Y=\mathbb{P}^1-\{0,1,\infty\}$. Recall that the homology of the projective curve
$H_1(X_{\ell^n},\Z)$ is a quotient of $H_1(X_{\ell^n}^\circ,\Z)$ by trivializing the 
cycles corresponding to loops around the ramified points above $0,1,\infty$ in the Belyi covering map. Furthermore, 
the action of $H_{\ell^n}$ (as $\mathrm{Deck}(X/Y))$ on $H_1(X_{\ell^n}^\circ,\Z)$, passes 
to the quotient $H_1(X_{\ell^n},\Z)$ (as $\Gal(X_{\ell^n}/\mathbb{P}^1)$) by filling the punctures.

 Having established the possible homology generators in (\ref{eq:span_homology}) as a $\Z$-module, Streit's algorithm 
 \cite{StreitSymplecticRepresentations} can carefully pick 
 $2g_{\ell^n}$ elements from this list and form a symplectic basis. As previously, 
 $g_{\ell^n}$ is the genus as in Equation (\ref{eq:genus}). We can continue 
 with our procedure for any of the above possible elements 
 of any possible symplectic basis as follows. 
 
 For any curve automorphism $f $ in $H_{\ell^n}$ of $X_{\ell^n}$ denote 
 by $f^*$ its pullback to the space of holomorphic differentials. We have that 
 \[ \int\limits_{f(\gamma)} u = \int\limits_{\gamma} f^* u,\] which means we only need to explicitly compute the integrals 
 over $\gamma_1,\gamma_2$. This substitution rule is also used by Rohrlich on the Fermat case in \cite{GrossAbelianIntegrals}; see 
 also the integration on dessins part of \cite{WolfartCMJacobians}.

We deviate from Rohrlich's method now and avoid cutting $\gamma_1$ and $\gamma_2$ into multiple 
line segments in $\mathbb{C}-\{0,1\}$, as it was done in \cite{GrossAbelianIntegrals} for the element 
$[l_\alpha,l_\beta]$ regarding Fermat curves. Instead, 
we make the following observations. When computing a Fermat period over a holomorphic differential $u$, 
looping once in the positive direction around $0$ (resp. $1$) 
corresponds to the action $x\mapsto \zeta \cdot x$ (resp. $y\mapsto \zeta \cdot y)$ on $u$. Thus, in 
order to keep track of the winding numbers around $0$ (resp. $1$) 
we have to keep track of the automorphism sequence applied 
on the differential $u$. For instance, travelling along the commutator 
loop $l_\alpha l_\beta l_\alpha^{-1} l_\beta^{-1}$ yields the sequence
\[ \widetilde{\alpha}^* (u), \ \widetilde{\beta}^* \widetilde{\alpha}^*(u), \ (\widetilde{\alpha}^{-1})^* \widetilde{\beta}^* \widetilde{\alpha}^* (u) =  \widetilde{\beta}^*(u), \ (\widetilde{\beta}^{-1})^*(\widetilde{\alpha}^{-1})^* 
\widetilde{\beta}^* \widetilde{\alpha}^*(u) = u,\] 
where  
$\widetilde{\alpha},\widetilde{\beta}$ are the Fermat curve automorphisms as 
in \S\ref{sec:heis_curves}. Given our affine models of the Heisenberg curves in \S\ref{sec:heis_curves}, 
the same principle holds for their periods as the actions $x\mapsto \zeta \cdot x$ and 
$y\mapsto \zeta \cdot y$ extend naturally to the automorphisms $\alpha$ and 
$\beta$ of $H_{\ell^n}$. 

At this point, we must carefully choose the integration paths based at 
$z_0$. To achieve this efficiently, we will implement the idea of the 
{\em tangential basepoint}; specifically, we pick $z_0$ in 
$(0,1)$ to be infinitesimally close to $1$, viewed as a tangent vector towards $0$. That is we take the limit $z_0\rightarrow 1$, and standard limiting arguments work due to the differentials being holomorphic. Therefore, we can write the periods in terms of integrals evaluated over $(0,1)$.

%{\color{blue} maybe add picture here.}

In the base space $\mathbb{C}-\{0,1\}$, with this setup looping around $0$ once is equivalent to 
travelling from $1$ to $0$ without altering the winding states of any puncture, 
and then returning from $0$ to $1$ by altering the winding state of $0$ using the pullback action of the automorphism 
$\alpha$. Conversely, 
the loop from $z_0$ around $1$ does not contribute anything to the path integral, as there is no distance travelled. However, 
it adjusts the winding number of $1$ by the action of the automorphism 
$\beta$. This procedure naturally leads to the Fox derivatives, which are defined as follows.

Let $F$ be a free group of finite rank on $x_1,\ldots,x_r$ and denote by $\epsilon_{\mathbb{Z}[F]}$ the augmentation map 
$\mathbb{Z}[F] \rightarrow \mathbb{Z}$. The Fox derivatives are the $\Z$-linear maps
 $\partial/\partial x_i: \Z[F]\rightarrow \Z[F]$ satisfying the properties:
\begin{align*}
    (1) \quad & \frac{\partial x_i}{\partial x_j} = 1 \quad \textrm{ if } i=j, \ 0 \textrm{ otherwise.} \\
    (2) \quad & \frac{\partial(y_1 + y_2)}{\partial x_j} = \frac{\partial y_1}{\partial x_j} + \frac{\partial y_2}{\partial x_j}, \quad \frac{\partial(m y_1)}{\partial x_j} = m\frac{\partial y_1}{\partial x_j} \quad (y_1, y_2 \in \mathbb{Z}[F], m \in \mathbb{Z}). \\
    (3) \quad & \frac{\partial(y_1y_2)}{\partial x_j} = \frac{\partial y_1}{\partial x_j}\epsilon_{\mathbb{Z}[F]}(y_2) + y_1\frac{\partial y_2}{\partial x_j} \quad (y_1, y_2 \in \mathbb{Z}[F]). \\
    (4) \quad & \frac{\partial f^{-1}}{\partial x_j} = -f^{-1}\frac{\partial f}{\partial x_j} \quad (f \in F).
\end{align*}
For a detailed exposition, their pro-$\ell$ versions and applications we 
refer to the book by Morishita \cite{Morishita2011-yw}. 
It should be noted that the pro-$\ell$ versions were 
heavily used by Ihara in \cite{Ihara1986towers,Ihara3point}. 
Additionally, some 
results of Ihara are reinstated by Nakamura in \cite{NakamuraTangential} 
based on Fox derivatives in combination with tangential basepoints. The latter were 
introduced by Deligne in \cite{Deligne3point} in a much broader setting and were communicated to Ihara, as he notes in 
\cite{Ihara1985-it}. This story and how the two notions interact in \cite{NakamuraTangential} is what inspired our current approach to 
compute periods. 

To formalize, let $F_2$ be the free group on 
$l_\alpha, l_\beta$ and $\pi:F_2 \rightarrow H_{\ell^n}$ (resp. $\pi: F_2 \rightarrow (\Z/\ell^n\Z)^2$) 
be the projection defined by $l_\alpha,l_\beta \mapsto \alpha,\beta$ (resp. $\widetilde{\alpha},\widetilde{\beta}$). By 
placing the tangential basepoint at $1$, we are differientating by 
the loop $l_\alpha$. This happens since the loop
 $l_\beta$ does not contribute to the integral and simply adjusts 
 the winding state for the next loop, which the Fox derivative keeps track of. This is encoded as follows.

 \[ \frac{\partial \left(l_\beta \cdot \gamma\right) }{\partial l_\alpha} = \frac{\partial l_\beta}{\partial l_\alpha} + l_\beta \cdot \frac{\partial \gamma }{\partial l_\alpha} = l_\beta \cdot \frac{\partial \gamma }{\partial l_\alpha}, \] and 
 \[\frac{\partial \left(l_\alpha \cdot \gamma\right) }{\partial l_\alpha} = 1 + l_\alpha \cdot \frac{\partial \gamma }{\partial l_\alpha},\] for $\gamma$ an element in $F_2$. 
 To get an integral from $0$ to $1$ over a loop $\gamma$, we first extend linearly 
 $\pi$ to $\Z[F_2]$ to get an automorphism action sequence corresponding to 
 the generators appearing in $\gamma$. We then use the pullbacks of the automorphisms 
 on the differentials. We 
 multiply by $\alpha^*-1$ (resp. $\widetilde{\alpha}^*-1$) 
 for the paths from $1$ to $0$, which does not alter the winding state around 
 a puncture, and from $0$ back to $1$ which encircles $0$ once. 
 
 For the Heisenberg curves $X_{\ell^n}$, any holomorphic differential $u$ and 
 any loop $\gamma$ in $F_2 \cong \pi_1(\mathbb{C}-\{0,1\},z_0)$ that is non-zero in the 
 homology $H_1(X_{\ell^n},\Z)$, we have

\begin{equation}\label{eq:period}
  \int_{\gamma} u = \int_{0}^{1} (\alpha^*-1) \cdot \pi 
  \left(\frac{\partial \gamma }{\partial l_\alpha} \right)^* (u).
\end{equation}
The same equation holds for the Fermat curves, by 
replacing the automorphism $\alpha$ by $\widetilde{\alpha}$. 
Let us perform the calculation from the appendix of \cite{GrossAbelianIntegrals} 
on Fermat curves of level $d$, using this 
machinery. We have
\[ \frac{\partial [l_\alpha,l_\beta]}{\partial l_\alpha} = 1 - l_\alpha^{-1} l_\beta l_\alpha, \] which projects 
to $1-\widetilde{\beta}$, for $\widetilde{\alpha},\widetilde{\beta}$ being the maps $(x,y)\mapsto (\zeta\cdot x,y)$ 
and $(x,y)\mapsto (x,\zeta \cdot y)$, respectively in 
$(\Z/d\Z)^2$. Recall that we use 
$x=t^{1/d}, \ y = (1-t)^{1/d}$ as variables over the projective $t$-line. Then,
\begin{equation} \label{eq:fermat_periods}
  \int\limits_{[l_\alpha,l_\beta]} x^{r-1}y^{s-d} dx = \int_{0}^{1} (\widetilde{\alpha}^*-1)(1-\widetilde{\beta}^*) 
x^{r-1}y^{s-d} dx = -(1-\zeta^r)(1-\zeta^s) \frac{B(r/d, s/d)}{d},\end{equation} where $B(m,n)$ denotes the Beta function. 
Let us perform now the calculation on Heisenberg curves $X_{\ell^n}$. We have the Fox derivatives:

\begin{align*}
 \frac{\partial [l_\alpha, [l_\alpha,l_\beta]]}{\partial l_\alpha} & = 1 + l_\alpha - l_{\alpha}^2 l_\beta l_\alpha^{-1} - 
l_\alpha [l_\alpha,l_\beta] l_\alpha^{-1} + 
l_\alpha [l_\alpha,l_\beta] l_\alpha^{-1} l_\beta - [l_\alpha, [l_\alpha,l_\beta]], \\
 \frac{\partial [l_\beta, [l_\alpha,l_\beta]]}{\partial l_\alpha} & = l_\beta - l_\beta l_\alpha l_\beta l_\alpha^{-1} + l_\beta [l_\alpha,l_\beta] 
 - l_\beta [l_\alpha,l_\beta] l_\alpha l_\beta^{-1} l_\alpha^{-1},
\end{align*} which are projected in $\Z[H_{\ell^n}]$ to
\[
\alpha - \alpha c \beta - c + c\beta, \quad \beta - \beta^2 c + \beta c - 1, \quad \textrm{ for } 
c=[\alpha,\beta] \in Z(H_{\ell^n}).\]
Therefore, since the pullback action is contravariant, for any holomorphic differential $u$ in $H^0(X_{\ell^n},\Omega_{\mathbb{C}})$ the equation 
(\ref{eq:period}) yields
\[ \int\limits_{[l_\alpha, [l_\alpha,l_\beta]]} u = \int_{0}^{1} (\alpha^*-1) (\alpha^*- \beta^* c^* \alpha^* - c^* + \beta^*c^*)(u), \]
\[\int\limits_{[l_\beta, [l_\alpha,l_\beta]]} u = \int_{0}^{1} (\alpha^*-1) 
(\beta^*- c^* (\beta^2)^* + c^*\beta^* - 1)(u), \] from which all the periods of the Heisenberg curves can be computed. 

\begin{example}

Let us work the period of the holomorphic differential 
$u = \varepsilon dx/y^{\ell^n-1}$ over the path $\gamma_2$. The differential $u$ is an eigenvector of $\beta^*,c^*$ with 
both eigenvalues being $\zeta$ and also $\alpha^*(u)= \zeta \frac{(1-x)}{y} u$. Therefore,
\begin{align*}
\int\limits_{[l_\beta, [l_\alpha,l_\beta]]}\varepsilon \frac{dx}{y^{\ell^n-1}}  
&= -(1-\zeta)(1-\zeta^2) \int_{0}^{1} \left( \zeta \frac{(1-x)}{y} - 1 \right) \varepsilon \frac{dx}{y^{\ell^n-1}} \\
&=  - \zeta (1-\zeta)(1-\zeta^2) \int_{0}^{1} \prod\limits_{i=1}^{\ell^n-1} (1-\zeta^i x)^{(i/\ell^n - 1)} dx  \\ 
& + (1-\zeta)(1-\zeta^2) \int_{0}^{1} (1-x)^{(1/\ell^n-1)}\prod\limits_{i=1}^{\ell^n-1} (1-\zeta^i x)^{( \frac{i+1}{\ell^n} - 1 )} dx.
%&= (1-\zeta)(1-\zeta^2)\left( -\zeta \cdot F_D^{(\ell^n-1)} \left( 1, \left\{ (\ell^n-i)/\ell^n \right\}_{i=1}^{\ell^n-1}, 2 ; 
%\left\{\zeta^i \right\}_{i=1}^{\ell^n-1}\right) \right)
\end{align*} Both of these integrals 
can be written in terms of the integral expansions of the Lauricella hypergeometric series 
$F_D^{(\ell^n-1)}$, see \cite[Example 3.5]{AomotoHypergeometric}.
\end{example}

\begin{remark}\label{remark:periods}
For the Heisenberg curves of level $\ell^n \neq 3$ (with genus 
$g=g_{\ell^n}$ given by Equation (\ref{eq:genus})), Theorems 
\ref{thm:main} and \ref{thm:complement}--which we prove 
later independently of this section--imply the existence of non-zero periods $\int_{\gamma_i} u_i$ and $\int_{\gamma_j} u_j$ whose quotient 
is transcendental. Equivalently, any period quotient of these curves is not defined over $\overline{\Q}$; this quotient is 
well-defined up to the action of the Siegel modular group 
$\mathrm{Sp}(2g, \Z)$ regarding the choice of a symplectic basis of 
$H_1(X_{\ell^n},\Z)$. This means no period matrix representative is an element of the matrix ring $M_g(\overline{\Q})$. For the implication see 
\cite[Thm. 3]{WolfartCMJacobians}, based on \cite[Cor. 2]{WolfartShiga}.
\end{remark}
% - (1-x)^{(1/\ell^n-1)}\prod\limits_{i=1}^{\ell^n-1} (1-\zeta^i x)^{( \frac{i+1}{\ell^n} - 1 )} 

\section{(Co)Homology and Representation Theory}\label{sec:rep_theory}
In this section, we shall discuss the discrete Heisenberg group $H_{\ell^n}$'s action on the homology vector 
spaces $H_1(X_{\ell^n},\overline{\Q})$ and $H_1(X_{\ell^n},\mathbb{C})$ in terms of representation theory, 
where the group acts via automorphisms of the curve as $H_{\ell^n}\cong \Gal(X_{\ell^n}/\mathbb{P}^1)$. As previously,
$\ell$ is an odd prime and $X_{\ell^n}$ is the $\ell^n$-level Heisenberg curve. 
We refer to \cite{Farkas-Kra} for the standard theory of compact Riemann surfaces and their homology groups, as well as their relation with the curves automorphisms. 
In particular, recall that the first homology group $H_1(X_{\ell^n},\Z)$ is a free $\Z$-module of rank 
$2g_{\ell^n}$, for the genus $g_{\ell^n}$ 
as in (\ref{eq:genus}) and $H_1(X_{\ell^n},\overline{\Q}) = H_1(X_{\ell^n},\Z) \otimes \overline{\Q}$ 
as well as $H_1(X_{\ell^n},\mathbb{C}) = H_1(X_{\ell^n},\Z) \otimes \mathbb{C}$ are its extensions of scalars.

Furthermore, we shall discuss the action of $H_{\ell^n}$ on the vector space of the holomorphic 
differentials $H^0(X_{\ell^n},\Omega_{\overline{\Q}})$ and $H^0(X_{\ell^n},\Omega_{\mathbb{C}})$. As our curves are 
defined over $\overline{\Q}$, the $H_{\ell^n}$ action has the same decomposition in both cases with the exception of scalars. 
However, over $\mathbb{C}$ there is a well-known relation between holomorphic differentials of the 
first kind and (co)homology, which builds up from Serre duality, the Hodge principle and the De Rham isomorphism. 
To put everything together, for a compact Riemann surface $\mathcal{C}$, Serre duality asserts that
\[H^1(\mathcal{C}, \mathcal{O}_{\mathcal{C}}) \cong H^0(\mathcal{C}, \Omega_\mathcal{C})^*,\]
where $\mathcal{O}_\mathcal{C}$ is the structure sheaf of $\mathcal{C}$ and $H^0(\mathcal{C},\Omega_\mathcal{C})^*$ is 
the dual of the space of holomorphic differentials $H^0(\mathcal{C},\Omega_\mathcal{C})$, 
which is also denoted sometimes as $\Omega^1(\mathcal{C})$ in the literature. Now, $H^1(\mathcal{C},\mathbb{C})$ (in the singular sense) as a $\mathbb{C}$-vector space is isomorphic to the de Rham first cohomology space
 $H^1_{\mathrm{dR}}(\mathcal{C},\mathbb{C})$ and we have the Hodge decomposition:
\begin{equation}\label{eq:hodge_decomp}
H^1(\mathcal{C},\mathbb{C}) \cong H^1_{\mathrm{dR}}(\mathcal{C},\mathbb{C}) \cong  H^0(\mathcal{C},\Omega_\mathcal{C}) 
\oplus H^1(\mathcal{C},\mathcal{O}_\mathcal{C}),
\end{equation} which splitting is respected by the action of any automorphism of 
$\mathcal{C}$. Additionally, $H^1(\mathcal{C},\mathbb{C})$ is the dual of $H_1(\mathcal{C},\mathbb{C}) = 
H_1(\mathcal{C},\Z)\otimes_\Z \mathbb{C}$ and by the above decomposition splits into holomorphic and anti-holomorphic 
differentials, where the latter are obtained by complex conjugation of the former. The above discussion is, of course, well-known 
and a detailed exposition can be found, for example, in \cite[Ch. 0]{GriffithsHarrisPrincliples}.

Furthermore, $H^1(\mathcal{C},\mathbb{C})$ and 
$H_1(\mathcal{C},\mathbb{C})$ have the same character under 
the action of any $G\subseteq \mathrm{Aut}(\mathcal{C})$. 
By duality, these must have the same character up to complex 
conjugation, and since the $\mathrm{Aut}(\mathcal{C})$-action 
is already well-defined in $H_1(\mathcal{C},\Z)$, the
trace of any matrix representing the action of 
an automorphism
on the homology must be an integer. Thus, the characters coincide.

Based on the decomposition (\ref{eq:hodge_decomp}), we can thus relate the characters of the action of $H_{\ell^n}$ on $H_1(X_{\ell^n},\mathbb{C})$ and on 
$H^0(X_{\ell^n},\Omega_{\mathbb{C}})$, once we get an understanding of them. The character of the former action has been established 
in previous work by the author \cite{kontogeorgis2024galoisactionhomologyheisenberg}. Namely, if we denote $\chi_{\ell^n}$ the character of 
$H_1(X_{\ell^n},\mathbb{C})$ (or over $\overline{\Q}$), we have in terms of irreducible characters (\ref{eq:chi_ijs})
\begin{equation}\label{eq:chi_elln}
\chi_{\ell^n} = \sum\limits_{j=0}^{\ell^n-1}\sum\limits_{i,s=0}^{d_j-1} h_{ijs}\chi_{ijs},
\end{equation}for $d_j = \gcd(\ell^n,j)$ 
and $h_{ijs} = \frac{\ell^n}{d_j} - [i \text{ or } s = 0] - [i=s = 0] - [i+s \equiv 0\mod d_j]$, if $(i,j,s)\neq (0,0,0)$ and $h_{0,0,0}=0$, where we use again the 
Iverson bracket $[P]=1$ or $0$ if the proposition $P$ is true or false. A particularly useful case will be the character $\chi_\ell$ 
at the $\ell$-level Heisenberg curve, that is
\begin{equation}\label{eq:chi_ell}
\chi_{\ell} = \sum\limits_{\substack{i,s=0 \\ i+s  \not\equiv0 \ \mathrm{mod }\ell}}^{\ell-1} \!\!\!\!\! \chi_{i,0,s} +  
\sum\limits_{j=1}^{\ell-1} (\ell-3)\chi_{0,j,0}.
\end{equation}
It will be of importance that the higher-dimensional representations $\chi_{0,j,0}$, for $1\leq j < \ell$ of $H_\ell$ appear with multiplicity $(\ell-3)$. Additionally, 
the first component of one-dimensional representations is credited to the Fermat curve as a subcover of 
$\mathbb{P}^1$, e.g. see \cite[Prop. 2.12]{KontogarParamFermat}. 

We can thus move on understanding the action of $H_{\ell^n}$ on $H^0(X_{\ell^n},\Omega_{\mathbb{C}})$. This will be done as 
an application of the Chevalley-Weil theorem \cite{Chevalley1934-eb}, which is quite powerful in understanding the group-module 
structure of holomorphic differentials, not only of the first kind, in both positive and $0$ characteristic. 
For a convenient description of the theorem, we refer to \cite[Table 1, Thm. 2]{computational_chev_weil}. As it is customary, 
for any finite group $G$, we denote by $\langle \cdot, \cdot \rangle_G$ the standard inner product of characters of $G$, i.e. $\langle \psi_1,\psi_2 \rangle_G = \frac{1}{|G|}\sum_{g\in G} 
\psi_1(g)\overline{\psi_2(g)}$, and for a subgroup $G^\prime$ and a character $\psi$ of $G$, we denote $\mathrm{Res}_{G^\prime}^G\psi$, for the restriction of the representation $\psi$ to $G^\prime$.

Let $\psi_{\ell^n}$ be the character of the action on the holomorphic differentials of the first kind. Then, by (\ref{eq:hodge_decomp}), we have that
\[\chi_{\ell^n} = \psi_{\ell^n} + \overline{\psi}_{\ell^n}.\] 

In \cite{computational_chev_weil} notation, we set $m$ equal to $1$ as we work with 
first kind differentials, and the base of the cover is $\mathbb{P}^1$, which is of genus $0$, with branch points 
$(0),(1),(\infty)$ and ramification indices $\ell^n$. Now, the Chevalley-Weil theorem determines the multiplicity of 
an irreducible character $\chi_{ijs}$ of $H_{\ell^n}$ in $\psi_{\ell^n}$, as follows. 
\begin{equation}\label{eq:chev_weil}
   \langle \psi_{\ell^n}, \chi_{ijs}\rangle_{H_{\ell^n}} = [i=j=s=0] -\dim \chi_{ijs} + 
\sum\limits_{d=0}^{\ell^n-1}\left\langle \frac{-d}{\ell^n}\right\rangle \left(n_{d,(0),\chi_{ijs}} + 
n_{d,(1),\chi_{ijs}} + n_{d,(\infty),\chi_{ijs}}\right),\end{equation}
where $\langle q \rangle= q-\lfloor q \rfloor$ denotes the fractional part of $q$ and the values
 $n_{d,Q,\chi_{ijs}}$, for $Q=(0),(1),(\infty)$ in 
$\mathbb{P}^1$ are defined below. Recall that $\beta,\alpha,(\beta\alpha)^{-1}$, as in the proof of Proposition \ref{prop:Gal_H_n}, 
are the stabilizers of the ramified points lying above $(0),(1),(\infty)$ respectively. Then, we denote by $\psi_Q^d$ the irreducible characters of the 
cyclic groups $\langle \beta\rangle, \langle \alpha \rangle$ and $\langle \alpha\beta \rangle$ accordingly, 
mapping the corresponding cyclic generator to $\zeta^d$, for $\zeta$ being the fixed primitive $\ell^n$-nth root of unity 
and $d=0,1,\ldots,\ell^n-1$. With the above, we have
\[n_{d,(0),\chi_{ijs}}= \langle\psi^d_{(0)}, \mathrm{Res}^{H_{\ell^n}}_{\langle \beta \rangle}(\chi_{ijs}) 
\rangle_{\langle \beta \rangle }, \quad n_{d,(1),\chi_{ijs}}= \langle\psi^d_{(1)}, \mathrm{Res}^{H_{\ell^n}}_{\langle \alpha \rangle}(\chi_{ijs}) 
\rangle_{\langle \alpha \rangle }, \]
\[n_{d,(\infty),\chi_{ijs}}= \langle\psi^d_{(\infty)}, \mathrm{Res}^{H_{\ell^n}}_{\langle \alpha\beta \rangle}(\chi_{ijs}) 
\rangle_{\langle (\beta\alpha)^{-1} \rangle }. \] Using the formula (\ref{eq:chi_ijs}) for the characters $\chi_{ijs}$, we can get the following proposition.
\begin{proposition} \label{prop:n_values}
\[ n_{d,(0),\chi_{ijs}} = \begin{cases}
1, & \textrm{ if } d\equiv s\mod \gcd(\ell^n,j), \\
0, & \textrm{ otherwise,}
\end{cases} \quad n_{d,(1),\chi_{ijs}} = \begin{cases}
1, & \textrm{ if } d\equiv i\mod \gcd(\ell^n,j), \\
0, & \textrm{ otherwise,}
\end{cases}\]
\[n_{d,(\infty),\chi_{ijs}}= \begin{cases}
1, & \textrm{ if } d\equiv -(i+s)\mod \gcd(\ell^n,j), \\
0, & \textrm{ otherwise.}
\end{cases}\]
\end{proposition}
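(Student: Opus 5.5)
The plan is to compute each restriction $\mathrm{Res}^{H_{\ell^n}}_{\langle g\rangle}\chi_{ijs}$ directly from the character formula (\ref{eq:chi_ijs}), for $g=\beta,\alpha,(\beta\alpha)^{-1}$. Each resulting inner product over a cyclic group of order $\ell^n$ should reduce to an orthogonality sum of roots of unity. Throughout write $n=\ell^n$ for brevity, $d_j=\gcd(n,j)$, and use that (\ref{eq:chi_ijs}) vanishes unless $n/d_j$ divides both the $\beta$-exponent $m$ and the $\alpha$-exponent $\mu$.

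\emph{The point $(0)$.} The elements are $\beta^m$, so $\lambda=\mu=0$. Then $\chi_{ijs}(\beta^m)=\frac{n}{d_j}\zeta^{ms}$ when $\frac{n}{d_j}\mid m$, and $0$ otherwise. Hence
\[
n_{d,(0),\chi_{ijs}}=\frac1n\sum_{m=0}^{n-1}\chi_{ijs}(\beta^m)\zeta^{-dm}=\frac{1}{d_j}\sum_{k=0}^{d_j-1}\bigl(\zeta^{n/d_j}\bigr)^{k(s-d)} .
\]
Since $\zeta^{n/d_j}$ is a primitive $d_j$-th root of unity, this equals $1$ if $d\equiv s \bmod d_j$ and $0$ otherwise.

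\emph{The point $(1)$.} The computation for $\alpha^\mu$ is identical, with $\mu$ in place of $m$ and $i$ in place of $s$. It gives the condition $d\equiv i\bmod d_j$.

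\emph{The point $(\infty)$.} This is the only step with real content. First put $(\beta\alpha)^{m}$ into the normal form $\beta^{m}c^{\lambda(m)}\alpha^{m}$. Since $H_n$ is $2$-step nilpotent with central $c$, an identity of the type $(\alpha\beta)^m=\alpha^m\beta^m[\beta,\alpha]^{\binom m2}$ quoted in \S\ref{sec:heis_curves}, followed by moving $\alpha^m$ past $\beta^m$ (which costs a further power $c^{\pm m^2}$), gives $\lambda(m)=q(m)$. Here $q$ is an explicit integer-valued quadratic polynomial of the form $m(am+b)/2$. The $\beta$- and $\alpha$-exponents both equal $m$, so the character is supported on $\frac{n}{d_j}\mid m$.

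For such $m=k\,n/d_j$, the claim is that $\zeta^{\lambda(m) j}=1$. Write $j=d_j j'$. Then $\lambda(m)j=d_j j'\cdot k\frac{n}{d_j}\cdot\frac{am+b}{2}$, and since $n$ is odd, $2$ is invertible modulo $n$. So $\lambda(m)j$ is a multiple of $n$, and the central twist disappears. The same argument shows that replacing $\beta\alpha$ by $\alpha\beta=c\,\beta\alpha$ changes nothing. Consequently
\[
\chi_{ijs}\bigl((\beta\alpha)^{m}\bigr)=\frac{n}{d_j}\zeta^{m(i+s)} \quad\text{for } \tfrac{n}{d_j}\mid m .
\]
Finally, the cyclic generator used for $\psi^d_{(\infty)}$ is $(\beta\alpha)^{-1}$, i.e.\ we evaluate at $m\mapsto -m$. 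The same orthogonality sum as before then yields $1$ exactly when $d\equiv -(i+s)\bmod d_j$.

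The main obstacle is the bookkeeping in this last case. One has to get the normal-form exponent $\lambda(m)$ right, using the paper's ordering $\beta^m c^\lambda\alpha^\mu$, and check that the oddness of $n$ kills the term $\zeta^{\lambda j}$ exactly on the support of the character. One also has to track the sign coming from the inverse generator $(\beta\alpha)^{-1}$. The other two cases are immediate.
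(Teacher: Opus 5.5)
Your proposal is correct and follows essentially the paper's proof: restrict the character formula (\ref{eq:chi_ijs}) to the cyclic stabilizers, write $(\beta\alpha)^m$ in the normal form $\beta^m c^{\lambda(m)}\alpha^m$, use that $\ell^n$ is odd to kill the central factor $\zeta^{\lambda(m)j}$ on the support $\frac{\ell^n}{d_j}\mid m$, and finish with orthogonality of $d_j$-th roots of unity. The paper writes out only the $(\infty)$ case and you also treat $(0)$ and $(1)$, but this is the same argument. Your detour through $(\alpha\beta)^m$ is unnecessary, because applying the $2$-step nilpotent identity to $\beta\alpha$ directly gives $\lambda(m)=\binom{m}{2}$; still, your ``$2$ is invertible modulo $\ell^n$'' argument works for any integer-valued quadratic of that shape.
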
 \begin{proof} Let us showcase the third computation, as the other two are similar. Set 
$\rho_{j} = \zeta^{\frac{\ell^n}{d_j}}$ a primitive $d_j$-root of unity. 
Observe that for $\lambda=1,\ldots,\ell^n-1$ we have 
$(\beta \alpha)^\lambda = \beta^\lambda c^{\binom{\lambda}{2}} \alpha^\lambda$ and that $\ell^n$ 
divides $j\binom{\frac{\ell^n}{d_j}\lambda}{2}$, since $\ell$ is odd. Recall from equation 
(\ref{eq:chi_ijs}), that only $\ell^n/d_j$-multiples survive as exponents of $\beta$ and $\alpha$ in the 
character $\chi_{ijs}$. Note also that from the monodromy configuration of $(0),(1),(\infty)$ we have that $\psi^d_{(\infty)}( (\beta\alpha)^{-1} ) = \zeta^d$. Therefore,
\begin{align*}
n_{d,(\infty),\chi_{ijs}} & = \frac{1}{\ell^n} \sum\limits_{\lambda=0}^{\ell^n-1} \psi^d_{(\infty)} 
( (\beta\alpha)^{-\lambda}) \overline{\chi_{ijs}( (\beta\alpha)^{-\lambda})} \\
&=\frac{1}{d_j} \sum\limits_{\lambda=0}^{d_j-1} \zeta^{ \frac{\ell^n}{d_j} \lambda (d +i+s) + 
j\binom{ \lambda \frac{\ell^n}{d_j}}{2} } \\
&=\frac{1}{d_j} \sum\limits_{\lambda=0}^{d_j-1} \rho_j^{\lambda (d+i+s)},
\end{align*} and as this is a standard sum of roots of unity, the result follows. \end{proof}

\begin{proposition}[Chevalley-Weil formula for $H_{\ell^n}$-action on $H^0(X_{\ell^n},\Omega_{\mathbb{C}})$]
\label{prop:chev_weil2}
  Let $\chi_{ijs}$ be an irreducible 
character of $H_{\ell^n}$. Let $\psi_{\ell^n}$ be the 
character of $H^0(X_{\ell^n},\Omega_{\mathbb{C}})$ as a 
$\mathbb{C}[H_{\ell^n}]$-module. Then, the character $\chi_{ijs}$ appears in the decomposition of $\psi_{\ell^n}$ with the following multiplicity:
\begin{align}\label{eq:chev_weil2}
\langle \psi_{\ell^n}, \chi_{ijs}\rangle_{H_{\ell^n}} &=
  \frac{\ell^n+3 \gcd(\ell^n,j) - 2( i + s + \langle -(i+s)\rangle_{\gcd(\ell^n,j)})}{2\gcd(\ell^n,j)} \\
  & +[\chi_{ijs}=\chi_{0,0,0}]  - \sum\limits_{\mu = i,s,i+s} [\mu \equiv 0 \mod \gcd(\ell^n,j)], \notag
\end{align} where $\langle \lambda \rangle_{m}$ denotes the unique integer $0\leq \lambda^\prime <m$ such that $\lambda \equiv \lambda^\prime\mod m$.
\end{proposition}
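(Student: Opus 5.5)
The plan is to substitute the values of Proposition \ref{prop:n_values} into the Chevalley--Weil formula (\ref{eq:chev_weil}) and evaluate the three resulting sums of fractional parts in closed form. Write $d_j=\gcd(\ell^n,j)$ and $N=\ell^n$. The term $[i=j=s=0]$ is exactly $[\chi_{ijs}=\chi_{0,0,0}]$, and $\dim\chi_{ijs}=N/d_j$ by Section \ref{subsec:Hn_reps}. What remains is to compute, for a residue $r$ modulo $d_j$,
\[
\Sigma(r):=\sum_{\substack{0\le d\le N-1\\ d\equiv r \bmod d_j}}\left\langle \frac{-d}{N}\right\rangle .
\]
The formula then reads $\langle\psi_{N},\chi_{ijs}\rangle = [\chi_{ijs}=\chi_{0,0,0}] - N/d_j + \Sigma(s)+\Sigma(i)+\Sigma(-(i+s))$.

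To evaluate $\Sigma(r)$, take $r_0=\langle r\rangle_{d_j}\in\{0,\dots,d_j-1\}$. The relevant $d$ are $r_0+kd_j$ for $k=0,\dots,N/d_j-1$. For $d\neq 0$ we have $\langle -d/N\rangle = 1-d/N$, and for $d=0$ it is $0$.

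\textbf{Case $r_0\neq0$.} None of the $d$ vanish, so
\[
\Sigma(r)=\frac{N}{d_j}-\frac{1}{N}\sum_{k}(r_0+kd_j)=\frac{N}{d_j}-\frac{r_0}{d_j}-\frac{1}{2}\Big(\frac{N}{d_j}-1\Big)=\frac{N+d_j-2r_0}{2d_j}.
\]

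\textbf{Case $r_0=0$.} The same computation applies, except the $d=0$ term contributes $0$ instead of $1$. So $\Sigma(r)=\frac{N+d_j}{2d_j}-1$. This is the origin of the Iverson bracket corrections $[\mu\equiv0 \bmod d_j]$ for $\mu=i,s,i+s$.

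Adding the three contributions gives
\[
\frac{3N+3d_j-2(\langle i\rangle_{d_j}+\langle s\rangle_{d_j}+\langle -(i+s)\rangle_{d_j})}{2d_j}-\frac{N}{d_j}
=\frac{N+3d_j-2(i+s+\langle -(i+s)\rangle_{d_j})}{2d_j}.
\]
Here I use that the characters are indexed with $0\le i,s\le d_j-1$, so $\langle i\rangle_{d_j}=i$ and $\langle s\rangle_{d_j}=s$. Adding $[\chi_{ijs}=\chi_{0,0,0}]$ and subtracting the three brackets gives (\ref{eq:chev_weil2}).

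The only delicate point is bookkeeping rather than any real obstacle. It consists of two things. First, the sign convention at $\infty$: I use the congruence $d\equiv -(i+s)$ exactly as stated in Proposition \ref{prop:n_values}. Second, the $d=0$ term, where the fractional part jumps. As a check, I would verify the result against the known total: for $N=\ell$ and $j\neq0$ it should give multiplicity $(\ell-3)/2$, which is half of the $(\ell-3)$ appearing in (\ref{eq:chi_ell}), consistent with $\chi_\ell=\psi_\ell+\overline{\psi}_\ell$.
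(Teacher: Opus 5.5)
Your proposal is correct and is essentially the paper's proof. The paper also substitutes the Iverson-bracket values of Proposition \ref{prop:n_values} into (\ref{eq:chev_weil}) and uses $\langle -d/\ell^n\rangle = 1-d/\ell^n$ for $d\neq 0$; it isolates the $d=0$ terms as the correction $-\sum_Q n_{0,Q,\chi_{ijs}}$, which produces the brackets $[\mu\equiv 0 \bmod \gcd(\ell^n,j)]$, and then evaluates the arithmetic-progression sums exactly as your $\Sigma(r)$ does. Your computation additionally covers the trivial character without setting it aside, and your check against (\ref{eq:chi_ell}) agrees with the paper's subsequent remark.
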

\begin{proof} 
  We will reinterpret the values $n_{d,Q,\chi_{ijs}}$ from Proposition \ref{prop:n_values} as 
  Iverson symbols and plug them into equation 
  (\ref{eq:chev_weil}). Set again $d_j=\gcd(\ell^n,j)$. Observe that 
  $\langle -d/\ell^n\rangle = 1 - d/\ell^n$ for 
  $d=1,\ldots,\ell^n-1$, and for an integer 
  $0\leq \mu < d_j$ we can use the following summation tricks, 
  \[\frac{1}{\ell^n}\sum\limits_{d=0}^{\ell^n-1} d\cdot[d\equiv \mu \! \mod d_j] = 
\frac{1}{\ell^n}\sum\limits_{m = 0}^{\frac{\ell^n}{d_j}-1} (\mu + m \cdot d_j)
=  \frac{2\mu + \ell^n-d_j}{2d_j},\] as well as that 
\[ \sum\limits_{d=0}^{\ell^n-1}[d\equiv \mu \! \mod d_j] = \frac{\ell^n}{d_j}.\]
For a non-trivial character $\chi_{ijs}$, we reinterpret now the right-hand-side of (\ref{eq:chev_weil}) as
\[ -\frac{\ell^n}{d_j} + \sum\limits_{d=0}^{\ell^n-1}\left(1-\frac{d}{\ell^n}\right)
 (n_{d,(0),\chi_{ijs}} + n_{d,(1),\chi_{ijs}} + n_{d,(\infty),\chi_{ijs}}) - 
 \sum\limits_{Q=(0),(1),(\infty)}n_{0,Q,\chi_{ijs}},\]
 which can be evaluated by using the two summation tricks for $\mu=i,s$ and the appropriate representative of $-(i+s)$. 
 The result follows by computation.
\end{proof}

\begin{remark}
Observe that for the Heisenberg curve 
$X_{\ell}$ of level $\ell$, any higher dimensional 
irreducible character $\chi_{0,j,0}$, for $1\leq j <\ell$, appears in the decomposition of the character of 
$H^0(X_{\ell},\Omega_{\mathbb{C}})$ with multiplicity 
$(\ell-3)/2$, by equation (\ref{eq:chev_weil2}). Combined with equation (\ref{eq:chi_ell}), 
this implies the characters $\chi_{0,j,0}$ are split 
evenly between the spaces of holomorphic and anti-holomorphic 
differentials. Furthermore, we see now in the $\ell=7$ 
example, that the $j=1$ component correctly consists of $(\ell-3)/2 =2$ 
irreducibles.  
\end{remark}

With the Chevalley-Weil formula at our disposal and the Hodge decomposition, 
we can in fact derive the character $\chi_{\ell^n}$ in 
(\ref{eq:chi_elln}) from \cite{kontogeorgis2024galoisactionhomologyheisenberg}, independently. It 
is worth noting that in \cite{kontogeorgis2024galoisactionhomologyheisenberg}, 
this was computed via 
combinatorial group theory tools, similar to our derivation of periods. 
These methods were inspired by the techniques used in arithmetic topology \cite{Morishita2011-yw}, 
which is an area about analogies between knots and primes. 
%In particular, 
%the Fox free differential calculus on free groups and its pro-$\ell$ 
%version were heavily used by Ihara in \cite{Ihara1986towers}.

The fact that we can derive the character on the first homology $\mathbb{C}$-vector space, from the character on the holomorphic differentials appears more generally 
in the work of Streit as \cite[Prop. 6]{StreitPeriods}. However, the 
argument is scattered between \cite{StreitHomology,StreitSymplecticRepresentations}. Thus, we will simply adapt his idea to our setting.

By conjugating the one-dimensional characters, we have that
\[n_{d,Q,\overline{\chi}_{ijs}} = \langle\psi^d_{Q}, \mathrm{Res}(\overline{\chi}_{ijs}) \rangle = \langle\overline{\psi^d_{Q}}, \mathrm{Res}(\chi_{ijs}) \rangle  = n_{\ell^n-d,Q,\chi_{ijs}},\]
and based on that and (\ref{eq:chev_weil}), for $\chi_{ijs}\neq \chi_{0,0,0}$, we can compute

\begin{align*}
\langle \chi_{\ell^n}, \chi_{ijs}\rangle &= \langle \psi_{\ell^n} + \overline{\psi}_{\ell^n}, \chi_{ijs}\rangle = 
\langle\psi_{\ell^n}, \chi_{ijs} \rangle + \langle \overline{\psi}_{\ell^n}, \chi_{ijs}\rangle \\
&= \langle\psi_{\ell^n}, \chi_{ijs} \rangle + \langle \psi_{\ell^n}, \overline{\chi}_{ijs}\rangle \\
&= -2\dim\chi_{ijs} + \frac{1}{\ell^n}\sum\limits_{Q=(0),(1),(\infty)}\sum\limits_{d=0}^{\ell^n-1} d\cdot (n_{d,Q,\chi_{ijs}} + n_{\ell^n-d,Q,\chi_{ijs}}) \\
&= -2\dim\chi_{ijs} + \sum\limits_{Q=(0),(1),(\infty)} \left(\sum\limits_{d=0}^{\ell^n-1} n_{d,Q,\chi_{ijs}} - n_{0,Q,\chi_{ijs}}\right) \\ 
&= \dim\chi_{ijs} - n_{0,(0),\chi_{ijs}} - n_{0,(1),\chi_{ijs}} - n_{0,(\infty),\chi_{ijs}}, \\
&
\end{align*}
which coincides with the description of $h_{ijs}$ as intended. In the above, we used the fact that \[\sum\limits_{d=0}^{\ell^n-1} n_{d,Q,\chi_{ijs}} = \dim \chi_{ijs}.\]

\section{Decomposition of the Jacobian}\label{sec:decomp}
In this section, we shall discuss the decomposition of the Jacobian varieties 
$\Jac(X_{\ell^n})$ of the Heisenberg curves $X_{\ell^n}$, with main focus to the base case of $X_\ell$. As always, we identify $X_{\ell^n}$ with the 
complex points of the nonsingular projective algebraic curve given by the affine maps $x^{\ell^n}+y^{\ell^n}=1$ and (\ref{eq:varepsilon_ln}), that means both $X_{\ell^n}$ and $\Jac(X_{\ell^n})$ are defined over $\overline{\Q}$.
For the standard theory of Jacobians of curves, 
we refer to \cite{milneAV} as well as Mumford's books \cite{MumfordAbelian, MR0419430}. Analytically, in terms of the 
complex topology, these are defined as 

\begin{equation} \label{eq:jac_def}
  \Jac(\mathcal{C}) = H^0(\mathcal{C},\Omega_{\mathbb{C}})^*/H_1(\mathcal{C},\Z),
\end{equation}
for any algebraic curve $\mathcal{C}$, see \cite[Thm 2.5, pg. 93]{milneAV} for the precise construction. 
Recall that we have a surjective morphism of curves $\pi:X_{\ell^n} \rightarrow Y_{\ell^n}$, 
where $Y_{\ell^n}$ is the $\ell^n$-level Fermat curve. By pulling back divisors, we have an isogeny:
\begin{equation}\label{eq:pullback}
   \Jac(Y_{\ell^n})\longrightarrow \pi^*(\Jac(Y_{\ell^n})) \subset \Jac(X_{\ell^n}),
\end{equation} where $\pi^*(\Jac(Y_{\ell^n}))$ denotes the pullback of the map $\pi$. Now the Poincaré complete 
reducibility theorem \cite[Cor. 1, pg. 73]{MumfordAbelian} asserts 
that any abelian variety $\mathcal{A}$ is isogenous to a product $\prod_{i=1}^r \mathcal{B}_i^{k_i}$ where $\mathcal{B}_i$ are simple abelian varieties and not isogenous to each other. Based on this and the isogeny (\ref{eq:pullback}), we have 
\begin{equation} \label{eq:jac_decomp}
  \Jac(X_{\ell^n}) \sim \Jac(Y_{\ell^n}) \times \mathcal{A}_1^{k_1} \times \mathcal{A}_2^{k_2} \times \cdots \times \times \mathcal{A}_r^{k_r},
\end{equation}
   where $\sim$ denotes an isogeny, it is an equivalence relation, and the $\mathcal{A}_i$ are simple abelian varieties not isogenous to each other. 
   We shall restrict ourselves to determining properties of the $\mathcal{A}_i$'s as well as to determine the number $r$, since the 
   Jacobians of Fermat curves are already a well-studied topic. See e.g. \cite[Sec. 8.5]{MurtyAbVar}. % Section 8.5

We will rely on the theory developed by Wolfart \cite{WolfartCMJacobians}, based on \cite{WolfartShiga}, 
to link the representation-theoretic approach we have developed so far in this paper with the above decomposition 
(\ref{eq:jac_decomp}). We start with the following lemma of his. 
\begin{lemma}[Wolfart's Lemma 14 \cite{WolfartCMJacobians}] \label{lemma:wolfart_14}
Let $\mathcal{C}$ be a nonsingular projective algebraic curve defined over $\overline{\Q}$, uniformized by 
some cocompact Fuchsian triangle group $\Delta$, 
such that $\Delta/\Gamma = \Gal(\mathcal{C}/\mathbb{P}^1)\subseteq \Aut(\mathcal{C})$, for some finite index subgroup $\Gamma$. Denote by $\Phi$ the canonical representation of $\Delta/\Gamma$ on the $\overline{\Q}$-vector space of holomorphic differentials on $\mathcal{C}$
 (or equivalently on its Jacobian, which is also defined over $\overline{\Q}$). Assume that
 \[\Jac(\mathcal{C}) \sim \mathcal{B}_1^{k_1} \times \cdots \times \mathcal{B}_m^{k_m},\] where the $\mathcal{B}_i, i=1,\ldots,m$ are simple, 
 pairwise non-isogenous abelian varieties. Let $U_{\nu}$ denote the pullback of $H^0(\mathcal{B}_\nu^{k_\nu},\Omega_{\overline{\Q}})$ in $H^0(\Jac(\mathcal{C}),\Omega_{\overline{\Q}})$. 
 Then, every $U_\nu$ is an invariant subspace of the representation $\Phi$.
\end{lemma}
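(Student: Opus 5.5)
The plan is to pass to the analytic description (\ref{eq:jac_def}) and use the fact that the automorphism group acts on $\Jac(\mathcal{C})$ by isogeny-invariant data. Each $\sigma \in \Delta/\Gamma \subseteq \Aut(\mathcal{C})$ induces, by functoriality of the Jacobian, an automorphism $\sigma_*$ of $\Jac(\mathcal{C})$ defined over $\overline{\Q}$. Its pullback on invariant differentials $H^0(\Jac(\mathcal{C}),\Omega_{\overline{\Q}}) \cong H^0(\mathcal{C},\Omega_{\overline{\Q}})$ is exactly $\Phi(\sigma)$. It therefore suffices to show that every endomorphism of $\Jac(\mathcal{C})$ maps the isotypic subvariety corresponding to $\mathcal{B}_\nu^{k_\nu}$ into itself. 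The claim then follows on the level of differentials, after tracking the possibly non-injective isogeny.

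The key steps, in order, are as follows.

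\textbf{Step 1.} Fix an isogeny $\phi:\mathcal{B}_1^{k_1}\times\cdots\times\mathcal{B}_m^{k_m}\to\Jac(\mathcal{C})$ and let $A_\nu:=\phi(\mathcal{B}_\nu^{k_\nu})$. These are abelian subvarieties, and the addition map $\prod A_\nu \to \Jac(\mathcal{C})$ is an isogeny. The space $U_\nu$ is then identified, via the isogeny, with the annihilator-complement: the differentials that vanish on the tangent spaces of all $A_\mu$ with $\mu\ne\nu$. Equivalently, one works dually, where $U_\nu$ corresponds to $\mathrm{Lie}(A_\nu)$ under the identification $H^0(\Jac,\Omega)^* = \mathrm{Lie}(\Jac)$. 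Since isogenies induce isomorphisms on Lie algebras and on spaces of invariant differentials in characteristic $0$, this identification is canonical.

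\textbf{Step 2.} Show $\sigma_*(A_\nu)\subseteq A_\nu$ for every endomorphism, in particular for every $\sigma_*$. Composing $A_\nu \hookrightarrow \Jac \xrightarrow{\sigma_*}\Jac$ with the projections to $A_\mu$ (defined up to isogeny via $\prod A_\mu\to\Jac$) gives homomorphisms $\mathcal{B}_\nu^{k_\nu}\to\mathcal{B}_\mu^{k_\mu}$ up to isogeny. Each such homomorphism restricts to maps $\mathcal{B}_\nu\to\mathcal{B}_\mu$. The image of a simple variety is either $0$ or isogenous to it, and the image is a subvariety of $\mathcal{B}_\mu^{k_\mu}$, all of whose simple factors are isogenous to $\mathcal{B}_\mu$. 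So by Poincaré reducibility the map vanishes for $\mu\ne\nu$. Hence $\mathrm{Hom}(\mathcal{B}_\nu^{k_\nu},\mathcal{B}_\mu^{k_\mu})=0$, and $\sigma_*(A_\nu)\subseteq A_\nu$ up to the finite kernel ambiguity. Since $\sigma_*(A_\nu)$ is connected, it then lies in $A_\nu$ itself.

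\textbf{Step 3.} Differentiate. The inclusion $\sigma_*(A_\nu) \subseteq A_\nu$ gives $d\sigma_*(\mathrm{Lie}\,A_\nu)\subseteq \mathrm{Lie}\,A_\nu$, so the dual subspace $U_\nu$ is stable under $\sigma^*=\Phi(\sigma)$ (using $\sigma^{-1}$ if needed). Finally, I would note that everything is defined over $\overline{\Q}$, because $\Jac(\mathcal{C})$ and its abelian subvarieties and endomorphisms descend to $\overline{\Q}$. This is standard rigidity: endomorphisms of abelian varieties over $\overline{\Q}$ are defined over $\overline{\Q}$. The main obstacle is Step 1, namely making the identification of $U_\nu$ (a pullback through an isogeny in the non-canonical direction) precise. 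I would handle it by replacing $\phi$ with a quasi-inverse isogeny $\psi$ satisfying $\phi\psi=[N]$ and observing that $[N]^*$ is multiplication by $N$ on differentials, so the subspaces are unaffected.
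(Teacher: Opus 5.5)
Your proposal is correct and is essentially the paper's argument. The one variation is in how the isotypic subvariety is defined. The paper takes $I_\nu=\bigl(\bigcap_{i\neq\nu}\bigcap_{g\in\mathrm{Hom}(\Jac(\mathcal{C}),\mathcal{B}_i)}\ker g\bigr)^\circ$, which is invariant under every endomorphism $\sigma_*$ because $g\circ\sigma_*$ is again such a $g$. You instead take the image $A_\nu=\phi(\mathcal{B}_\nu^{k_\nu})$ and prove invariance via $\mathrm{Hom}(\mathcal{B}_\nu^{k_\nu},\mathcal{B}_\mu^{k_\mu})=0$ for $\mu\neq\nu$; the two subvarieties coincide.

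Your description $U_\nu=\bigl(\bigoplus_{\mu\neq\nu}\mathrm{Lie}\,A_\mu\bigr)^\perp$ makes precise what the paper states only as $U_\nu\cong H^0(I_\nu,\Omega_{\overline{\Q}})$. Note, though, that stability of this $U_\nu$ under $\Phi(\sigma)$ requires invariance of the $A_\mu$ with $\mu\neq\nu$, not of $A_\nu$ as your Step 3 says. Your Step 2 supplies this, since it holds for every index.
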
 
\begin{proof}
The proof is as described by Wolfart \cite[pg. 25]{WolfartCMJacobians}. Without loss of generality, assume that each $\mathcal{B}_i$ is a subvariety of $\Jac(\mathcal{C})$. The action of $\Delta/\Gamma$ on $\mathcal{C}$ induces an action on $\Jac(\mathcal{C})$, and since the $\mathcal{B}_i$'s are simple and pairwise 
non-isogenous, any automorphism of the curve preserves their isomorphism class.
Consider the possible projections $g_\nu : \Jac(\mathcal{C})\rightarrow \mathcal{B}_\nu$ and denote by 
\[I_\nu := \left( \bigcap\limits_{i\neq \nu} \bigcap\limits_{g_i \in \mathrm{Hom}(\Jac(\mathcal{C}), \mathcal{B}_i)} \ker g_i \right)^\circ,\] 
the connected component of the identity in the intersection of the kernels of all the projections $g_i, i\neq \nu$. Since each $\mathcal{B}_i$ is preserved-up to isogeny-by the action of $\Delta/\Gamma$, 
each subvariety $I_\nu$ is $\Delta/\Gamma$-invariant. As $\Jac(\mathcal{C})$ is defined over $\overline{\Q}$, all the subvarieties, 
the projections and the isogenies are also defined over $\overline{\Q}$, thus $U_\nu \cong H^0(I_\nu,\Omega_{\overline{\Q}})$.
\end{proof}

In the proof of the Lemma \ref{lemma:wolfart_14}, each $I_\nu$ is isogenous to $\mathcal{B}_\nu^{k_\nu}$ and thus is an isotypic component of the action 
of $\Delta/\Gamma$. Therefore, Lemma \ref{lemma:wolfart_14} tells us that in order to understand the decomposition (\ref{eq:jac_decomp}), we have to undestand 
the isotypic components of $H^0(X_{\ell^n},\Omega_{\overline{\Q}})$ as a $\overline{\Q}[H_{\ell^n}]$-module.

In Proposition \ref{prop:module_decomp} we showed that each twisted Riemann-Roch subspace 
$V_{j,\ell^n}$ is $H_{\ell^n}$-invariant. We thus have to determine their isotypic components. 
For now, denote also by $V_{0,\ell^n}$ the holomorphic differentials of 
the Fermat curve. Then, the generator $c$ of the center 
$Z(H_{\ell^n}) = \langle c \rangle$ acts as multiplication by 
$\zeta^j$ on $V_{j,\ell^n}$ for $j=0,1,\ldots,\ell^n-1$. 
As an application of Schur's lemma, e.g. \cite[\S 2.2]{SerreLinear}, 
the action of a central element $g$ on an irreducible representation 
is multiplication by a scalar $\lambda_g$, and different scalars 
$\lambda_g \neq \lambda_g^\prime$ correspond to non-isomorphic 
irreducible representations. Thus, $V_{j,\ell^n},V_{j^\prime,\ell^n}$, 
for $j\neq j^\prime$, have no common irreducible factors, 
as the former is acted upon by scalar multiplication by $\zeta^j$ and the 
latter by 
$\zeta^{j^\prime}$, for $1\leq j,j^\prime \leq \ell^n-1$.

Furthermore, the $j$-indices of $V_{j,\ell^n}$ 
are ``in accordance'' with the 
characters $\chi_{ijs}$, since 
\[\chi_{ijs}(c) = \dim \chi_{ijs} \cdot  \zeta^j, \] which indicates 
the scalar action by $\zeta^j$. Thus, any irreducible character
 $\chi_{ijs}$ from the action on the holomorphic differentials on $X_{\ell^n}$ is strictly 
 contained in $V_{j,\ell^n}$, with the full multiplicity as in the 
 Chevalley-Weil formula (\ref{eq:chev_weil2}). That is, for the character $\chi_{V_{j,\ell^n}}$ of $V_{j,\ell^n}$, we have:
\[\chi_{V_{j,\ell^n}} = \bigoplus\limits_{i,s=0}^{\gcd(\ell^n,j)-1}
 \langle \psi_{\ell^n}, \chi_{ijs}\rangle_{H_{\ell^n}} \cdot \chi_{ijs}.\]

 From Lemma \ref{lemma:wolfart_14}, Proposition \ref{prop:module_decomp} and the discussion above, we obtain the following corollaries.

 \begin{corollary}
  The number $r$ of pairwise non-isogenous abelian varieties appearing in the decomposition of $\Jac(X_{\ell^n})$, which are not in the decomposition of $\Jac(Y_{\ell^n})$, is 
  \[ r= \sum\limits_{j=1}^{\ell^n-1} \gcd(\ell^n,j)^2 = \ell^{n-1}(\ell^n -1).\] 
 \end{corollary}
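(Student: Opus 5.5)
The plan is to identify $r$ with the number of distinct irreducible characters $\chi_{ijs}$, $j\neq 0$, that occur in $\psi_{\ell^n}$ with positive multiplicity. Then we count them using the Chevalley--Weil formula of Proposition \ref{prop:chev_weil2}.

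By Lemma \ref{lemma:wolfart_14} and the discussion after it, each factor $\mathcal{A}_\nu^{k_\nu}$ corresponds to an $H_{\ell^n}$-invariant subspace $U_\nu$, and these subspaces are isotypic. Proposition \ref{prop:module_decomp} and the central character argument show that the non-Fermat part of $H^0(X_{\ell^n},\Omega_{\overline{\Q}})$ is $\bigoplus_{j\geq 1}V_{j,\ell^n}$. Moreover, $V_{j,\ell^n}$ contains exactly the characters $\chi_{ijs}$ with that $j$, with the multiplicities $\langle\psi_{\ell^n},\chi_{ijs}\rangle$. So I will argue that distinct isotypic components of $\bigoplus_{j\ge1}V_{j,\ell^n}$ give distinct, pairwise non-isogenous simple factors not occurring in $\Jac(Y_{\ell^n})$. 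Thus $r$ equals the number of triples $(i,j,s)$ with $1\le j\le \ell^n-1$, $0\le i,s<d_j$ and $\langle\psi_{\ell^n},\chi_{ijs}\rangle>0$. The fact that the factors avoid $\Jac(Y_{\ell^n})$ holds because the Fermat differentials have $j=0$, that is, trivial central character.

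For the identification of isotypic components with simple factors, I would take Wolfart's framework as given. It gives a bijection between the $\nu$ and the isotypic components of $\Phi$: each $I_\nu$ is isotypic, and two different $\nu$ cannot share an irreducible, since the $I_\nu$ span $\Jac$ and intersect finitely. I expect this to be the main obstacle. It requires that the isotypic decomposition over $\overline{\Q}$ be no finer than the isogeny decomposition. One must rule out a single $\mathcal{A}_\nu^{k_\nu}$ carrying several non-isomorphic $\chi_{ijs}$. My first attempt is to note that $\mathcal{A}_\nu^{k_\nu}$ is isotypic and that the $\chi_{ijs}$ are absolutely irreducible with distinct central or $\langle\alpha,\beta\rangle$-characters. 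If Wolfart's Lemma is only one-directional, I would additionally cite his remark that each irreducible constituent corresponds to a single simple factor.

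The remaining step is a computation. We show that every $\chi_{ijs}$ with $j\neq0$ has positive multiplicity in formula (\ref{eq:chev_weil2}). Write $d=d_j\le \ell^{n-1}$ and $\mu=i+s+\langle-(i+s)\rangle_d$, which is at most $2d$ (in fact $\mu\le 2d-2$ or a similar bound). The multiplicity is then at least $(\ell^n+3d-2\mu)/(2d)-3$. This is at least $(\ell^n-d-4)/(2d)$ up to small corrections, and it is positive once $\ell^n\ge 3d+$const, which holds since $\ell^n/d\ge\ell\ge3$. The boundary cases, namely $\ell=3$ and $d=\ell^{n-1}$ with $i,s$ near $d$, need a direct check. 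Given positivity, summing over $j$ gives $r=\sum_{j=1}^{\ell^n-1}d_j^2$. Grouping $j$ by $v_\ell(j)=k$ for $0\le k\le n-1$ gives $\sum_k(\ell^{n-k}-\ell^{n-k-1})\ell^{2k}$. This telescopes to $\ell^{n-1}(\ell^n-1)$.
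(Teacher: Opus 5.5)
Your route is the same as the paper's: identify $r$ with the number of $\chi_{ijs}$, $j\neq0$, occurring in $\psi_{\ell^n}$. The step you flag as the main obstacle is not just unproved, though. It is false, and the count fails with it. Lemma~\ref{lemma:wolfart_14} only gives that $U_\nu$ is $H_{\ell^n}$-invariant. Distinct central characters cannot make $U_\nu$ a single $\overline{\Q}$-isotypic component, because abelian subvarieties carry a rational structure. Concretely, $c$ preserves the isotypic subvariety $I_\nu$ and acts on $H^1(I_\nu,\Q)$, so its characteristic polynomial there lies in $\Q[x]$. Since $H^1(I_\nu,\C)\cong U_\nu\oplus\overline{U_\nu}$, setting $t_\nu(a)=\dim(U_\nu\cap V_{a,\ell^n})$ we get that $t_\nu(a)+t_\nu(-a)$ is independent of $a\in(\Z/\ell^n\Z)^*$. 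Now take $n=1$ and $\ell\geq5$. Each $t_\nu(a)$ is a multiple of $\ell$, and $\sum_\nu\bigl(t_\nu(a)+t_\nu(-a)\bigr)=\dim V_a+\dim V_{-a}=\ell(\ell-3)$. Hence no $U_\nu$ equals a single $V_j$, so Corollary~\ref{cor:matching_decomps} is wrong as stated. Moreover, at most $\ell-3$ isogeny classes meet $\bigoplus_{j\geq1}V_j$, i.e.\ $r\leq\ell-3<\ell-1$; for $\ell=5$ this gives $r\leq2$, not $4$. The idempotents that cut out abelian subvarieties only see Galois orbits of characters (here $\chi_{0,1,0},\dots,\chi_{0,\ell-1,0}$ form one orbit). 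How such a block splits into simple factors is not dictated by representation theory.

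The positivity step also breaks, exactly in the boundary case you postponed. Your bound $(\ell^n+3d-2\mu)/(2d)-3$ with $\mu\leq2d$ is positive only when $\ell^n/d>7$. Evaluating (\ref{eq:chev_weil2}) exactly uses the fact that all three Iverson terms fire only when $\mu=0$, and none fires when $\mu=2d$. The multiplicity is then $(\ell^n-3d)/(2d)$ if $i=s=0$, $(\ell^n+d)/(2d)$ if $i,s\neq0$ and $i+s<d$, and $(\ell^n-d)/(2d)$ otherwise. So $\chi_{0,j,0}$ is absent precisely when $\ell^n/d_j=3$, in agreement with $h_{0,j,0}=\ell^n/d_j-3$ in (\ref{eq:chi_elln}). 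For $\ell^n=3$ this means $r=0$ (Remark~\ref{remark:ell_three}), while the formula gives $2$. For $\ell=3$, $n\geq2$, the two characters $\chi_{0,3^{n-1},0}$ and $\chi_{0,2\cdot3^{n-1},0}$ are missing. Your evaluation $\sum_j d_j^2=\ell^{n-1}(\ell^n-1)$ by grouping on $v_\ell(j)$ is correct, but that sum is not $r$. The paper's one-line derivation rests on the same identification of $\overline{\Q}$-isotypic components with isogeny classes. So the statement cannot be established along these lines, and it is false in the cases above.
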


For the $\ell$-level Heisenberg curves $X_{\ell}$, write 
$V_j := V_{j,\ell}$. Additionally, note that the following 
$\ell>3$ assumption is based on Remark \ref{remark:ell_three}.

\begin{corollary}\label{cor:matching_decomps}
  For a prime $\ell >3$, we have mathching decompositions 
  \[H^0(X_{\ell},\Omega_{\overline{\Q}}) \cong H^0(Y_{\ell},\Omega_{\overline{\Q}}) \oplus 
\bigoplus\limits_{j=1}^{\ell-1} V_j, \quad \Jac(X_{\ell}) \sim \Jac(Y_{\ell}) \times \prod\limits_{j=1}^{\ell -1} \mathcal{A}_j^{k_j},\] 
meaning in the above decomposition of $H^0(X_{\ell},\Omega_{\overline{\Q}})$ as a $\overline{\Q}[H_{\ell}]$-module, 
the $V_j$ terms are isotypic components with characters $\chi_{V_j} = \frac{(\ell-3)}{2} \chi_{0,j,0}$.
\end{corollary}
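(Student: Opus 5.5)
The plan is to specialize the Chevalley--Weil multiplicity formula, Proposition \ref{prop:chev_weil2}, to level $\ell$. We then combine it with the $H_\ell$-equivariant splitting of Proposition \ref{prop:module_decomp} and Wolfart's Lemma \ref{lemma:wolfart_14}.

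\textbf{Step 1: the character of $V_j$.} For $1\leq j\leq \ell-1$ we have $\gcd(\ell,j)=1$, so the only irreducible character with central character $\zeta^j$ is $\chi_{0,j,0}$, of dimension $\ell$. Substituting $i=s=0$ and $\gcd=1$ into (\ref{eq:chev_weil2}) gives $(\ell+3)/2 - 3 = (\ell-3)/2$. By the preceding discussion, every irreducible constituent of $V_j$ is some $\chi_{i j s}$, which here forces it to be $\chi_{0,j,0}$. Hence $\chi_{V_j}=\frac{\ell-3}{2}\chi_{0,j,0}$. As a consistency check, the dimension is $\ell(\ell-3)/2 = g_{\mathcal K_\ell}-1$, which agrees with (\ref{eq:lEj}).

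\textbf{Step 2: isotypic components.} The Fermat part $H^0(Y_\ell,\Omega)$ is fixed by $c$, so its constituents are one-dimensional characters $\chi_{i,0,s}$. It therefore shares no constituent with any $V_j$, and distinct $V_j$ share none with each other. Since $\ell>3$, each $V_j$ is nonzero (Remark \ref{remark:ell_three}). The $V_j$ are thus exactly the isotypic components attached to the $\chi_{0,j,0}$, and the Fermat differentials form the sum of the remaining isotypic components.

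\textbf{Step 3: matching with the Jacobian.} Write $\Jac(X_\ell)\sim \prod_\nu \mathcal B_\nu^{k_\nu}$ with pairwise non-isogenous simple $\mathcal B_\nu$. By Lemma \ref{lemma:wolfart_14}, each $U_\nu$ is $H_\ell$-invariant and is a sum of isotypic components. I expect this to be the main obstacle: showing that each $U_\nu$ is a sum of full isotypic components, and that the isotypic component $V_j$ does not split across several non-isogenous factors.

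For the first point, I would use the argument following the lemma: $I_\nu$ is an $H_\ell$-stable abelian subvariety, and non-isogenous factors have no nonzero homomorphisms between them. For the second point, I would use the pulled-back Fermat factor (\ref{eq:pullback}) to separate off $H^0(Y_\ell,\Omega)$. Then I would show that the abelian subvariety $J_j$ of $\Jac(X_\ell)$ cut out by the idempotent of $\overline{\Q}[H_\ell]$ (or of $\Q[H_\ell]$ after Galois-orbit considerations) corresponding to $\chi_{0,j,0}$ is isogenous to a power of a single simple variety $\mathcal A_j$.

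To get this single-factor statement, my first attempt would be the following. The central character identifies $c$ with $\zeta^j$ on $J_j$, so $\Q(\zeta)$ embeds in $\mathrm{End}^0(J_j)$. The group algebra component acts on $H^0(J_j,\Omega)$ through the irreducible $\chi_{0,j,0}$ with multiplicity $(\ell-3)/2$. Consequently any decomposition of $J_j$ into non-isogenous pieces would split this single isotypic component into two $H_\ell$-stable pieces with no common irreducible. That is impossible, since only one irreducible occurs. Hence $J_j\sim \mathcal A_j^{k_j}$.

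Finally, the $\mathcal A_j$ for different $j$ are non-isogenous, and none of them is isogenous to a factor of $\Jac(Y_\ell)$. Indeed, Lemma \ref{lemma:wolfart_14} assigns the differentials of an isogeny class to a single invariant subspace, whereas here the differentials lie in distinct isotypic components. This yields the displayed decomposition, with the $V_j$ corresponding to the factors $\mathcal A_j^{k_j}$.
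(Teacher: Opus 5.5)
Your Steps 1 and 2 are correct and coincide with the paper's argument: the central character of $c$ together with $\gcd(\ell,j)=1$ forces every constituent of $V_j$ to be $\chi_{0,j,0}$, and (\ref{eq:chev_weil2}) with $i=s=0$ gives multiplicity $(\ell-3)/2$. The gap is in Step 3, and both devices you propose there fail.

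First, only $\Q[H_\ell]$ maps into $\mathrm{End}^0(\Jac(X_\ell))$, so the idempotent of $\chi_{0,j,0}$ in $\overline{\Q}[H_\ell]$ does not define an endomorphism of the Jacobian. Since $\chi_{0,j,0}(c)=\ell\zeta^j$, the characters $\chi_{0,1,0},\dots,\chi_{0,\ell-1,0}$ form a single $\Gal(\Q(\zeta)/\Q)$-orbit. Your ``Galois-orbit considerations'' therefore produce the rational idempotent $1-\frac{1}{\ell}\sum_i c^i$. This cuts out the complement $P$ of $\pi^*\Jac(Y_\ell)$, with $H^0(P,\Omega)=\bigoplus_{j=1}^{\ell-1}V_j$; there is no variety $J_j$. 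Second, the splitting argument is invalid. The module $\frac{\ell-3}{2}\chi_{0,j,0}$ does split into $H_\ell$-stable summands that share the irreducible $\chi_{0,j,0}$. Non-isogenous factors may also carry isomorphic representations on their differentials (for trivial $G$ they all do). So non-isogeny gives no ``no common irreducible'' conclusion.

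Moreover, the matching you are after is false for $\ell>3$, so Step 3 cannot be repaired. On $P$ one has $\Phi_\ell(c)=0$. Hence for every $c$-stable abelian subvariety $0\neq B\subseteq P$ (for instance every isotypic factor of $P$), $H^1(B,\Q)$ is a $\Q(\zeta)$-vector space of some dimension $d>0$. So every $\zeta^j$-eigenspace of $c$ on $H^1(B,\C)$ has dimension $d$. Now $c$ preserves the Hodge decomposition, and complex conjugation exchanges $H^{1,0}$ with $H^{0,1}$ and the $\zeta^j$-eigenspace with the $\zeta^{-j}$-eigenspace. Writing $W_\lambda$ for the $\lambda$-eigenspace of $c$ on $W=H^0(B,\Omega)$, this gives
\[
\dim W_{\zeta^j}+\dim W_{\zeta^{-j}}=d \qquad (1\leq j\leq \ell-1).
\]
Thus the differentials of each isotypic factor of $P$ have nonzero components in at least $(\ell-1)/2\geq 2$ of the $V_j$, and none of them is a single $V_j$. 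Equivalently, if $c$ acted on $H^0(B,\Omega)$ by the scalar $\zeta^j$, its characteristic polynomial on $H_1(B,\Q)$ would have only the roots $\zeta^{\pm j}$. That is impossible for a rational polynomial once $\ell-1>2$.

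What your argument does prove is the $\overline{\Q}[H_\ell]$-module statement, together with $\Jac(X_\ell)\sim\Jac(Y_\ell)\times P$ and $H^0(P,\Omega)=\bigoplus_j V_j$. For the Jacobian side, the paper offers only the remark after Lemma \ref{lemma:wolfart_14} that each $I_\nu$ ``is an isotypic component''. That remark makes the same unjustified identification of rational isotypic pieces with $\overline{\Q}[H_\ell]$-isotypic ones.
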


\subsection{A Theorem by Kani and Rosen}\label{subsec:KaniRosen}

For this subsection let $\mathcal{C}$ denote a non-singular, projective, 
geometrically connected curve over a field of arbitrary characteristic. 
In \cite{KaniRosenIdempotent} Kani and Rosen studied idempotent relations in 
$\mathrm{End}(\Jac(\mathcal{C}))\otimes_\Z \Q$ and related them to factors of the Jacobian. 
Denote also by $\mathcal{C}/G$ the quotient curve for $G\subseteq \mathrm{Aut}(\mathcal{C})$. 
One of their theorems is the following.
\begin{theorem}[ Theorem B of \cite{KaniRosenIdempotent}]\label{thm:kani_rosen}
Let $G\subseteq \mathrm{Aut}(\mathcal{C})$ be a (finite) subgroup such that 
$G=A_1\cup \cdots \cup A_t$, where the subgroups $A_i\subseteq G$ satisfy $A_i\cap A_j = \{1\}$ for $i\neq j$. Then, we have the isogeny relation.

\[\Jac(\mathcal{C})^{t-1} \times \Jac(\mathcal{C}/G)^g \sim \Jac(\mathcal{C}/A_i)^{a_i} 
\times \cdots \times \Jac(\mathcal{C}/A_t)^{a_t},\]
where $g=|G|$ and $a_i=|A_i|$ for $i=1,\ldots,t$.
\end{theorem}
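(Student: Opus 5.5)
The plan is to prove the isogeny relation by working in the rational endomorphism algebra $\mathrm{End}^0(\Jac(\mathcal{C})) := \mathrm{End}(\Jac(\mathcal{C}))\otimes_\Z\Q$. For a subgroup $H\subseteq G$ put
\[ \varepsilon_H := \frac{1}{|H|}\sum_{h\in H} h \in \Q[G]. \]
Since $G$ acts on $\Jac(\mathcal{C})$ through $\mathrm{Aut}(\mathcal{C})$, each $\varepsilon_H$ maps to an idempotent of $\mathrm{End}^0(\Jac(\mathcal{C}))$.

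The first step is to identify the image of $\varepsilon_H$. Write $\pi_H:\mathcal{C}\to\mathcal{C}/H$ for the quotient map. We have $\pi_H^*\pi_{H,*} = \sum_{h\in H}h$ and $\pi_{H,*}\pi_H^* = |H|$ on $\Jac(\mathcal{C}/H)$. Hence $\pi_H^*$ is an isogeny of $\Jac(\mathcal{C}/H)$ onto the abelian subvariety $\mathrm{Im}(|H|\varepsilon_H)$.

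The second step is a general dictionary between idempotent relations and isogenies, which is the main technical point. Suppose $\varepsilon_1,\ldots,\varepsilon_m$ and $\varepsilon'_1,\ldots,\varepsilon'_{m'}$ are idempotents in $\mathrm{End}^0(\mathcal{A})$ with
\[ \sum_i c_i\varepsilon_i = \sum_k c'_k\varepsilon'_k, \qquad c_i,c'_k\in\Z_{\geq 0}. \]
I would show that then $\prod_i \mathrm{Im}(\varepsilon_i)^{c_i}\sim\prod_k \mathrm{Im}(\varepsilon'_k)^{c'_k}$.

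To prove this, I would pass to a faithful semisimple representation that detects isogeny classes. Take $V_\ell(\mathcal{A})$ as a module over the semisimple $\Q_\ell$-algebra generated by $\mathrm{End}^0$, or work directly with the decomposition $\mathcal{A}\sim\prod\mathcal{B}_\nu^{k_\nu}$ as in Lemma \ref{lemma:wolfart_14}. Up to isogeny, abelian subvarieties correspond to right ideals $e\,\mathrm{End}^0$, and their isogeny classes are determined by the ranks of these ideals in each simple factor $M_{k_\nu}(D_\nu)$ of $\mathrm{End}^0(\mathcal{A})$. For an idempotent in a matrix algebra, this rank equals its reduced trace divided by the reduced degree of $D_\nu$. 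Reduced trace is additive, so the linear relation gives equality of the multiplicities of each simple $\mathcal{B}_\nu$ on both sides, and Poincaré reducibility then yields the isogeny. I expect this rank/trace bookkeeping to be the main obstacle; the cleanest route is to use that the trace form on a semisimple algebra is nondegenerate.

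The third step is the group-ring identity. Since the $A_i$ partition $G\setminus\{1\}$,
\[ \sum_{i=1}^t |A_i|\,\varepsilon_{A_i} = \sum_{i=1}^t\sum_{a\in A_i}a = \sum_{g\in G}g + (t-1)\cdot 1 = |G|\,\varepsilon_G + (t-1)\varepsilon_{\{1\}}. \]
Both sides are nonnegative integer combinations of idempotents. Applying the second step, and using the first step to identify $\mathrm{Im}(\varepsilon_H)$ with $\Jac(\mathcal{C}/H)$ (note $\mathrm{Im}(\varepsilon_{\{1\}}) = \Jac(\mathcal{C})$), we obtain
\[ \Jac(\mathcal{C})^{t-1}\times\Jac(\mathcal{C}/G)^{g}\sim\prod_i\Jac(\mathcal{C}/A_i)^{a_i}, \]
which is the claimed isogeny relation.
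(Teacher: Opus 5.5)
Your proof is correct. The paper gives no argument for this statement (it is quoted from Kani and Rosen), and your three steps are in substance how Theorem B is derived there:
\begin{itemize}
\item a general principle converting linear relations among idempotents of $\mathrm{End}^0(\Jac(\mathcal{C}))$ into isogeny relations (their Theorem A);
\item the identification $\mathrm{Im}(\varepsilon_H)\sim\Jac(\mathcal{C}/H)$ via $\pi_H^*\pi_{H,*}=\sum_{h\in H}h$ and $\pi_{H,*}\pi_H^*=|H|$;
\item the identity $\sum_i a_i\varepsilon_{A_i}=(t-1)\cdot 1+g\,\varepsilon_G$ in $\Q[G]$ forced by the partition.
\end{itemize}
What your version adds is a self-contained proof of the principle. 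It uses nothing about the idempotents beyond idempotency, in particular no Rosati-symmetry or commutativity. That is convenient here, since for example $\varepsilon_{\langle\alpha\rangle}$ and $\varepsilon_{\langle\beta\rangle}$ do not commute in $\Q[H_\ell]$, because $\langle\alpha\rangle\langle\beta\rangle$ is not a subgroup.

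The step you flag as the main obstacle is easier than you expect. Since $e$ acts as the identity on $\mathrm{Im}\,e$, one has $\mathrm{Hom}^0(\mathcal{B}_\nu,\mathrm{Im}\,e)=e\,\mathrm{Hom}^0(\mathcal{B}_\nu,\mathcal{A})$. Hence the multiplicity of the simple factor $\mathcal{B}_\nu$ in $\mathrm{Im}\,e$ equals $\mathrm{tr}\bigl(e\mid\mathrm{Hom}^0(\mathcal{B}_\nu,\mathcal{A})\bigr)/\dim_\Q D_\nu$, which is visibly additive in $e$. This is your formula $\mathrm{Trd}(e_\nu)/q_\nu$ in another guise. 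Neither $V_\ell$ nor nondegeneracy of the trace form is needed, and Poincar\'e reducibility then finishes the argument as you say.
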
 

Let us apply Theorem \ref{thm:kani_rosen} to the Heisenberg curve $X_{\ell}$. 
For this, we have to partition the discrete Heisenberg group $H_\ell$. Firstly, 
one way to achieve this is as follows. The group $H_{\ell}$ has order $\ell^3$ and 
every element in it has order $\ell$. Since two distinct groups of order $\ell$ must intersect only at
 $1$, we can partition $H_{\ell}$ with only subgroups of order $\ell$. 
 We have $\ell^3-1$ non-trivial elements fitting into parts with $\ell-1$ elements, thus $t= \ell^2+\ell+1$ for this partition, which yields
 \begin{equation}\label{eq:kani_rosen1}
 \Jac(X_{\ell})^{\ell+1} \sim \Jac(X_{\ell}/A_1) \times \cdots \times \Jac(X_{\ell}/A_{\ell^2+\ell+1}),
 \end{equation} and the $A_i$ are all the possible cyclic subgroups $\langle \beta^* c^* \alpha^*\rangle$, where by $*$ we denote arbritrary powers between $0$ and $\ell-1$.

We can partition $H_\ell$, in a second way, providing another isogeny relation for 
$\Jac(X_{\ell})$. Namely, denote by $A\subseteq H_{\ell}$ the subgroup 
$\langle \alpha \rangle \times \langle c \rangle \cong (Z/\ell\Z)^2$. It is a 
maximal abelian subgroup of $H_{\ell}$ which contains the center $Z(H_{\ell})$. We can now partition 
$H_\ell$ as $A$ and $H_{\ell}\setminus A$, where we also partition the latter as in the previous way, 
that is in terms of subgroups of order $\ell$ not contained in $A$. In this case, $A$ contains $\ell^2-1$ 
non-trivial elements, and we have to divide the rest $\ell^3 -\ell^2$ non-trivial elements into parts of $\ell-1$ elements. That is 
$t=1+\ell^2$, which yields the isogeny relation
\begin{equation}\label{eq:kani_rosen2}
\Jac(X_{\ell})^\ell \sim \Jac(X_{\ell}/A)^\ell \times \prod\limits_{A_i \not \subseteq A} \Jac(X_{\ell}/A_i). 
\end{equation} 
Combining now the isogeny relations (\ref{eq:kani_rosen1}) and (\ref{eq:kani_rosen2}), we get the isogeny relation

\[ \Jac(X_{\ell}) \times \Jac(X_{\ell}/A)^\ell \sim \prod\limits_{A_i \subseteq A} \Jac(X_{\ell/A_i}). \] Note that 
in the right-hand-side $\Jac(Y_{\ell})$ appears as a factor, since the quotient 
$X_{\ell}/Z(H_{\ell})=Y_{\ell}$ is the Fermat curve. It could, perhaps, be possible to use the above 
decomposition to say more about the simple abelian varieties appearing in Corollary \ref{cor:matching_decomps}, 
in terms of Jacobians of subcovers of $X_{\ell}\rightarrow \mathbb{P}^1$. However, as we will not require this 
information for our main Theorem \ref{thm:main}, we will not pursue this approach any further for now.

 \section{Complex Multiplication}\label{sec:cm}
In this section, we prove the main Theorem \ref{thm:main}, verifying Ihara's intuition from \cite{Ihara3point} 
that for $\ell>3$ the Jacobians of Heisenberg curves 
do not admit complex multiplication. We also discuss the $\ell=3$ case. We begin by recalling the precise definition of 
complex multiplication and some standard facts.

A simple polarized abelian variety $\mathcal{A}$ over a field of characteristic $0$ has 
{\em complex multiplication}, or in other words {\em is of CM-type}, if its endomorphism algebra 
\[ \mathrm{End}^0(\mathcal{A}) := \mathrm{End}(\mathcal{A})\otimes_\Z \Q \] is a number field 
$\mathbb{K}$ of degree $[\mathbb{K}:\Q] = 2\dim \mathcal{A}$. In this case, 
$\mathbb{K}$ is necessarily a CM-field, meaning it is a quadratic extension of a totally real field $F$ with 
$[F:\Q] = \dim \mathcal{A}$. If $\mathcal{A}$ is not simple, 
then it has a decomposition by the Poincaré complete reducibility theorem; 
we say $\mathcal{A}$ is of CM-type if every simple factor in the decomposition is of CM-type. 
This property is well-defined up to isogeny by Lemma \ref{lemma:isogenous}.
\begin{remark}
  The definition above is tailored to the characteristic $0$ case. Some authors prefer the phrase
  {\em admits sufficiently many complex multiplications}, in order to include both 0 and positive characteristic cases. 
  If $\mathcal{A}$ is an abelian variety over a field of positive characteristic, 
  one demands that the potentially non-commutative algebra $\mathrm{End}^0(\mathcal{A})$ 
  contains a commutative semi-simple subalgebra of rank $2\dim \mathcal{A}$ over $\Q$. 
  See \cite[4.2]{OortAbelianFiniteFields} for a 
  clarification regarding the above definition.
\end{remark}

\begin{lemma}\label{lemma:isogenous}
If $\mathcal{A} \sim \mathcal{B}$ are isogenous abelian varieties over 
a field of characteristic $0$, then 
$\mathcal{A}$ is of CM-type if and only if $\mathcal{B}$ is.
\end{lemma}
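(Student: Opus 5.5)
The plan is to reduce the statement to a comparison of simple factors, using the uniqueness part of Poincaré complete reducibility together with the fact that an isogeny induces an isomorphism of rational endomorphism algebras. By symmetry of the isogeny relation (there is an isogeny $\mathcal{B}\to\mathcal{A}$ in the reverse direction, a dual-type quasi-inverse up to multiplication by an integer), it suffices to show that if $\mathcal{A}$ is of CM-type then so is $\mathcal{B}$.

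First I treat the simple case. Let $\phi:\mathcal{A}\to\mathcal{B}$ be an isogeny with quasi-inverse $\psi:\mathcal{B}\to\mathcal{A}$, so $\psi\circ\phi=[N]_{\mathcal{A}}$ and $\phi\circ\psi=[N]_{\mathcal{B}}$. The map
\[ \mathrm{End}^0(\mathcal{A})\longrightarrow \mathrm{End}^0(\mathcal{B}), \quad f\mapsto \tfrac{1}{N}\,\phi\circ f\circ\psi \]
is a $\Q$-algebra isomorphism: it is additive, multiplicative because $\psi\phi=N$, and has inverse $g\mapsto \frac1N\psi g\phi$. Since $\dim\mathcal{A}=\dim\mathcal{B}$, and simplicity is preserved under isogeny (an abelian subvariety of $\mathcal{B}$ pulls back via $\phi$ to one of $\mathcal{A}$ of the same dimension), $\mathrm{End}^0(\mathcal{B})$ is a number field of degree $2\dim\mathcal{B}$ exactly when $\mathrm{End}^0(\mathcal{A})$ is one of degree $2\dim\mathcal{A}$.

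In the general case, write $\mathcal{A}\sim\prod\mathcal{A}_i^{k_i}$ by Poincaré reducibility. Composing with the isogeny $\mathcal{A}\sim\mathcal{B}$ gives a decomposition of $\mathcal{B}$ into simple factors isogenous to the $\mathcal{A}_i$. The step needing care, and the main obstacle, is well-definedness: the definition of CM-type refers to \emph{the} decomposition, so I must check that the simple factors of $\mathcal{B}$ in any Poincaré decomposition are, up to isogeny and multiplicity, the same as those of $\mathcal{A}$. For this I would invoke the uniqueness statement in Poincaré's theorem (via $\mathrm{Hom}$ between non-isogenous simple varieties being zero, so any simple $\mathcal{S}$ with a nonzero map into $\prod\mathcal{A}_i^{k_i}$ is isogenous to some $\mathcal{A}_i$). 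Then each simple factor of $\mathcal{B}$ is isogenous to some $\mathcal{A}_i$, which is CM by hypothesis, and the simple case finishes the proof.
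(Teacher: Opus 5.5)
Your proof is correct and rests on the same key fact as the paper's one-line proof, namely that an isogeny induces an isomorphism $\mathrm{End}^0(\mathcal{A})\cong\mathrm{End}^0(\mathcal{B})$; you prove this via the quasi-inverse rather than citing it. Your extra step, matching the simple factors using uniqueness in Poincar\'e's theorem, spells out the non-simple case that the paper's citation leaves implicit.
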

\begin{proof}
The lemma follows from the fact that isogenous abelian varieties have isomorphic endomorphism algebras, 
see e.g. \cite[pg. 43]{milneAV}.
\end{proof}
\begin{lemma}\label{lemma:subjac}
Let $X\rightarrow Y$ be a finite morphism of curves over $\mathbb{C}$. If $\Jac(X)$ is of CM-type, then $\Jac(Y)$ is of CM-type.
\end{lemma}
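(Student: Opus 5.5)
The plan is to show that $\Jac(Y)$ is isogenous to an abelian subvariety of $\Jac(X)$, and then to use the fact that an abelian subvariety of a CM-type abelian variety is again of CM-type.

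\textbf{Step 1: embedding $\Jac(Y)$ up to isogeny.} Let $f\colon X\rightarrow Y$ be the given finite morphism, of degree $d$. It induces the pullback $f^*\colon \Jac(Y)\rightarrow \Jac(X)$ on divisor classes and the norm (pushforward) $f_*\colon \Jac(X)\rightarrow \Jac(Y)$. Their composite satisfies $f_*\circ f^* = [d]$ on $\Jac(Y)$. Since $[d]$ is an isogeny, $\ker f^*$ is finite. Hence $f^*$ is an isogeny onto its image $B:=f^*(\Jac(Y))\subseteq \Jac(X)$, exactly as in (\ref{eq:pullback}). By Lemma \ref{lemma:isogenous}, it then suffices to show that $B$ is of CM-type.

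\textbf{Step 2: $B$ is of CM-type.} By the Poincaré complete reducibility theorem there is an abelian subvariety $B'\subseteq\Jac(X)$ with $\Jac(X)\sim B\times B'$. Decompose $B\sim\prod_i \mathcal{C}_i^{m_i}$ into simple, pairwise non-isogenous factors. Doing the same for $B'$ yields a decomposition of $\Jac(X)$ into simple factors. We need that each $\mathcal{C}_i$ is isogenous to one of the simple factors $\mathcal{B}_\nu$ of $\Jac(X)$.

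This follows from the uniqueness of the Poincaré decomposition up to isogeny and permutation. To see it directly: the composite
\[\mathcal{C}_i\hookrightarrow \Jac(X)\xrightarrow{\ \sim\ }\prod_\nu \mathcal{B}_\nu^{k_\nu}\]
is a nonzero homomorphism. Therefore some projection $\mathcal{C}_i\rightarrow\mathcal{B}_\nu$ is nonzero. A nonzero homomorphism between simple abelian varieties is an isogeny, since its kernel is finite and a dimension count then forces surjectivity.

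By hypothesis each $\mathcal{B}_\nu$ is of CM-type. So each $\mathcal{C}_i$ has $\mathrm{End}^0(\mathcal{C}_i)\cong\mathrm{End}^0(\mathcal{B}_\nu)$, a CM field of degree $2\dim\mathcal{B}_\nu=2\dim\mathcal{C}_i$. Thus every simple factor of $B$ is of CM-type, so $B$ is of CM-type by definition, and by Lemma \ref{lemma:isogenous} so is $\Jac(Y)$.

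\textbf{Expected obstacle.} The only step needing care is the uniqueness of the simple factors in Step 2. The argument given there reduces it to the elementary fact that nonzero maps between simple abelian varieties are isogenies. Everything else is standard.
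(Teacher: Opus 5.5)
Your proposal is correct and follows the paper's route. The paper simply asserts that $\Jac(Y)$ is isogenous to an abelian subvariety of $\Jac(X)$ and invokes Lemma~\ref{lemma:isogenous}. You additionally justify that isogeny via $f_*\circ f^*=[d]$, and you make explicit the step the paper leaves implicit: every simple factor of an abelian subvariety is isogenous to a simple factor of the ambient variety.

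One small slip: surjectivity of a nonzero map between simple abelian varieties comes from simplicity of the target (the image is a nonzero abelian subvariety), not from a dimension count.
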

\begin{proof}
This follows from the fact that $\Jac(Y)$ is isogenous to an abelian 
subvariety of $\Jac(X)$, together with \ref{lemma:isogenous}.
\end{proof}
As before, we will rely on the framework developed by Wolfart, which relates the isogeny
decomposition of the Jacobian to the irreducible representations 
arising from the action of the automorphism group on the space of 
holomorphic differentials. Below, we state his main theorem from \cite{WolfartCMJacobians}.

\begin{theorem}[Wolfart's Theorem $8$, \cite{WolfartCMJacobians}]
    Let $\Gamma$ be a normal torsion-free subgroup of a Fuchsian 
    triangle group $\Delta$ and $X= \Gamma/ \mathbb{H}$ be the uniformized 
    Riemann surface of genus $g>1$ with many automorphisms. Let 
    $G=\Delta/\Gamma\subseteq \mathrm{Aut}(X)$ be the Galois group 
    of the Belyi function $X\rightarrow \Delta/\mathbb{H}$. Let $\Phi$ be the canonical representation of $G$ on $H^0(X,\Omega_{\overline{\mathbb{Q}}})$. For any irreducible subspace $U$ of $\Phi$ there exists a simple abelian variety $A_\nu$ of $\Jac(X)$ occuring with multiplicity $k_\nu$ and with endomorphism algebra $D_\nu$, with dimension $q_\nu^2 $ over its center, such that the following inequality holds.

    \[\frac{2q_\nu \dim_\mathbb{C} A_\nu}{\dim_{\mathbb{Q}}D_\nu} \leq 
    \dim_{\overline{\mathbb{Q}}} U \leq k_\nu q_\nu.\] In particular, for the endomorphism algebra it holds that
    \[2 \dim_\mathbb{C} A_\nu \leq k_\nu \dim_\mathbb{Q} D_\nu.\]
\end{theorem}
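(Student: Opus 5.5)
The plan is to reproduce Wolfart's argument by relating three module structures on the isotypic piece $U_\nu$ of Lemma \ref{lemma:wolfart_14}: the $G$-module structure, the $\mathrm{End}^0$-module structure, and the Hodge structure. By Lemma \ref{lemma:wolfart_14}, every irreducible $G$-subspace $U$ lies in a unique $U_\nu\cong H^0(I_\nu,\Omega_{\overline{\Q}})$ with $I_\nu\sim A_\nu^{k_\nu}$. Since $G$ acts on $I_\nu$, its image lies in
\[\mathrm{End}^0(I_\nu)\cong M_{k_\nu}(D_\nu).\]
Write $Z_\nu$ for the center of $D_\nu$ and $e_\nu=[Z_\nu:\Q]$, so that $\dim_\Q D_\nu=e_\nu q_\nu^2$. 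After extending scalars,
\[M_{k_\nu}(D_\nu)\otimes_\Q\overline{\Q}\cong\prod_{\sigma:Z_\nu\hookrightarrow\overline{\Q}}M_{k_\nu q_\nu}(\overline{\Q}).\]
Accordingly, $U_\nu$ splits into eigenspaces $W_\sigma$ for the action of $Z_\nu$. Each $W_\sigma\cong\overline{\Q}^{k_\nu q_\nu}\otimes M_\sigma$, where $M_\sigma$ is a multiplicity space of dimension $m_\sigma$.

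For the upper bound, I would argue as follows. The center $Z_\nu$ commutes with $G$, because its elements are central in $\mathrm{End}^0(I_\nu)$. Hence each $W_\sigma$ is $G$-stable, and irreducibility of $U$ forces $U\subseteq W_\sigma$ for a single $\sigma$. On $W_\sigma$ the group acts through the factor $M_{k_\nu q_\nu}(\overline{\Q})\otimes 1$. Any irreducible $G$-submodule of $\overline{\Q}^{k_\nu q_\nu}\otimes M_\sigma$ is then isomorphic to a $G$-submodule of the standard $k_\nu q_\nu$-dimensional module. This gives $\dim U\le k_\nu q_\nu$.

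The lower bound is where the hypothesis of many automorphisms enters, and I expect it to be the main obstacle. I would first use the rational structure. The space $H_1(A_\nu,\Q)$ is a free $D_\nu$-module, and the Hodge decomposition of $H^1(A_\nu,\mathbb{C})$ into holomorphic and antiholomorphic parts gives $m_\sigma+m_{\bar\sigma}$ equal to a constant. Summing over all $e_\nu$ embeddings then yields
\[\sum_\sigma m_\sigma=\frac{\dim A_\nu}{q_\nu},\qquad m_\sigma+m_{\bar\sigma}=\frac{2\dim A_\nu}{e_\nu q_\nu}.\]
The inequality to aim for is therefore $\dim U\ge m_\sigma+m_{\bar\sigma}$. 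For this I would use rigidity: $X$ corresponds to an isolated point of the locus of curves with $G$-action. Consequently the Shimura variety of abelian varieties with the same $G$-action and polarization type, whose tangent space is the $G$-invariants of $\mathrm{Sym}^2 H^0(X,\Omega)^*$, must be zero-dimensional. Thus $\bigl(\mathrm{Sym}^2 H^0(X,\Omega)^*\bigr)^G=0$.

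Suppose instead that some irreducible $U\subseteq W_\sigma$ had dimension smaller than $m_\sigma+m_{\bar\sigma}$. Then, pairing the $U$-isotypic part of $W_\sigma$ with the dual contribution from $W_{\bar\sigma}$, the $\sigma$ and $\bar\sigma$ multiplicity spaces would be too large to be forced into a single Hodge piece by $G$. This would produce a nonzero $G$-invariant in $U^*\otimes U'^*\subseteq\mathrm{Sym}^2$, that is, a deformation, which is a contradiction. Making this step precise is the delicate part.

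Combining the bounds gives the displayed inequality, since $2q_\nu\dim A_\nu/\dim_\Q D_\nu=2\dim A_\nu/(e_\nu q_\nu)$. Comparing the two outer terms gives $2\dim A_\nu\le k_\nu e_\nu q_\nu^2=k_\nu\dim_\Q D_\nu$.
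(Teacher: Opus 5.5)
Your upper bound is correct and is essentially the argument the paper reproduces. The paper splits $U_\nu\cong H^0(A_\nu,\Omega_{\overline{\Q}})^{k_\nu}$ into $G$-stable blocks $(U_{\nu\mu})^{k_\nu}$ of dimension $k_\nu q_\nu$, and your eigenspaces $W_\sigma\cong\overline{\Q}^{k_\nu q_\nu}\otimes M_\sigma$ are a coarser grouping of the same blocks. One small fix: irreducibility does not force $U\subseteq W_\sigma$. If the $G$-representations on different copies of $\overline{\Q}^{k_\nu q_\nu}$ share a constituent, $U$ can sit diagonally. But the projection of $U$ to some $W_\sigma$ is nonzero, hence injective, and that is all you need.

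The lower bound is where your proposal fails, and the failing step is false rather than merely delicate. (The paper does not prove this bound; it quotes it from Wolfart.) Rigidity of $X$ only says that $X$ has no $G$-equivariant deformations, i.e.\ $H^1(X,T_X)^G=0$, equivalently $H^0(X,\Omega^{\otimes 2})^G=0$. The differential of the Torelli map is dual to the multiplication map $\mathrm{Sym}^2H^0(X,\Omega)\to H^0(X,\Omega^{\otimes 2})$. Its kernel (the quadrics through the canonical curve) is large and can contain $G$-invariants. So the Shimura variety of abelian varieties with the given $G$-action can be positive dimensional while $\Jac(X)$ is an isolated point of its intersection with the Jacobian locus.

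In fact $\bigl(\mathrm{Sym}^2H^0(X,\Omega)^*\bigr)^G=0$ is exactly the hypothesis of Streit's criterion (Section~\ref{subsec:streit}). Your claim would therefore make every curve with many automorphisms CM, contradicting Wolfart's genus $3$ and $4$ examples and Theorem~\ref{thm:main}. The paper even notes that the Heisenberg curves fail Streit's criterion. Moreover, whenever that vanishing does hold, every $A_\nu$ is CM, so $q_\nu=1$, $\dim_\Q D_\nu=2\dim A_\nu$, and the lower bound degenerates to $1\le\dim U$. No deformation argument of this type can carry the content of the inequality, which concerns non-CM factors.

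What is missing is an arithmetic input about periods; this is what Wolfart's integration on dessins, cited in Section~\ref{sec:periods}, is for.

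First, $X\to X/G\cong\mathbb{P}^1$ is branched over only three points. Hence $H_1(X,\Z)$ is generated by cycles built from the $G$-translates $ge$ of a single lift $e$ of $[0,1]$. For $\omega\in U$, every period is then $\sum_g n_g\int_e g^*\omega$. This is a $\overline{\Q}$-linear combination of $\int_e u_1,\dots,\int_e u_d$ for a $\overline{\Q}$-basis $u_1,\dots,u_d$ of $U$. So the $\overline{\Q}$-span of the periods of $\omega$ has dimension at most $\dim U$ (compare \eqref{eq:period}). This is where ``many automorphisms'' is really used.

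Second, apply W\"ustholz's analytic subgroup theorem, as used by Shiga--Wolfart, to a nonzero component of $\omega$ viewed on $A_\nu^{k_\nu}$. It says the only $\overline{\Q}$-linear relations among the periods of a nonzero $\overline{\Q}$-rational differential on the simple factor $A_\nu$ are those forced by $D_\nu$. Since $H_1(A_\nu,\overline{\Q})$ is free over $D_\nu\otimes\overline{\Q}$, this bounds the span from below by $2\dim A_\nu/(e_\nu q_\nu)=2q_\nu\dim A_\nu/\dim_\Q D_\nu$. That is exactly the quantity $m_\sigma+m_{\bar\sigma}$ you computed correctly. Your Hodge-theoretic bookkeeping identifies the right target, but reaching it requires this transcendence argument rather than rigidity.
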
 
We will specifically make use of the inequality
\begin{equation}\label{eq:ineq1}
\dim_{\overline{\Q}} U \leq k_\nu q_\nu, 
\end{equation}
thus we will pinpoint the exact argument of Wolfart proving it. This is a 
combination of \cite[Lemma 15]{WolfartCMJacobians} and the classification of simple abelian varieties and their endomorphism algebras 
\cite{AlbertA, MR156001}.

As in Lemma \ref{lemma:wolfart_14} we have subspaces 
$U_\nu \subseteq \Jac(X)$ that are invariant under the 
$G=: \Delta/\Gamma \subseteq \mathrm{Aut}(X)$ action. Each $U_\nu$ is 
isomorphic to $H^0(A_\nu^{k_\nu},\Omega_{\overline{\Q}}) \cong H^0(A_\nu,\Omega_{\overline{\Q}})^{k_\nu}$. 
Thus, if $U_{\nu\mu}, \mu=1,\ldots,m_{\nu}$ are the $D_\nu$-invariant irreducible subspaces of $H^0(A_\nu,\Omega_{\overline{\Q}})$, then 
$U_\nu = \sum_\mu (U_{\nu\mu})^{k_\nu}$. The action now of 
$G$ on $U_\nu$ is an action by endomorphisms 
$\mathrm{End}^0(\Jac(X))$, via the homomorphism 
$\Z[G]\rightarrow \mathrm{End}^0(\Jac(X))$, resctricted on $U_\nu$. As $\mathrm{End}^0(U_\nu) \cong \mathrm{End}^0(A_\nu^{k_\nu})\cong M_{k_\nu}(D_\nu)$, the action of $G$ on $U_{\nu}$ is via 
$k_\nu\times k_\nu$ matrices over $D_\nu$. Since each 
$U_{\nu\mu}$ is $D_\nu$-invariant, the entire block 
$(U_{\nu\mu})^{k_\nu}$ is $G$-invariant. Therefore, any $G$-irreducible subspace $U$ of $\Phi$ is contained in some $(U_{\nu\mu})^{k_\nu}$. This implies,
\[\dim_{\overline{\Q}} U \leq k_\nu \cdot \dim_{\overline{\Q}} U_{\nu\mu}.\] 

From the classification of simple abelian varieties and their endomorphism algebras \cite{AlbertA, MR156001}, each $D_\nu$ decomposes $H^0(A_\nu,\Omega_{\overline{\Q}})$ into irreducible subspaces of dimension 
$q_\nu$. That is $\dim_{\overline{\Q}} U_{\nu\mu } = q_\nu$, 
completing the justification of inequality (\ref{eq:ineq1}). 

We return our focus now on the Heisenberg curves $X_{\ell}$, 
for the primes $\ell > 3$. Recall that by $\mathcal{A}_j$ we denote the 
simple abelian varieties appearing in the decomposition of 
$\Jac(X_{\ell})$ from Corollary \ref{cor:matching_decomps}. 
To the abelian varieties $\mathcal{A}_j^{k_j}$ correspond the 
isotypic components $V_j$. The subspaces $V_j$ consist of 
$H_\ell$-irreducible spaces with characters $\chi_{0,j,0}$ of 
dimension $\ell$. Therefore, in this setting the inequality (\ref{eq:ineq1}) translates to
\begin{equation}\label{eq:ineq2}
\ell \leq k_j q_j,
\end{equation} where $q_j^2$ is the dimension of the endomorphism 
algebra $D_j := \mathrm{End}^0(\mathcal{A}_j)$ over its center.

We now prove the following lemmas and then we combine all the ingredients to state and prove our main theorem.
\begin{lemma}\label{lemma:ineq_dj}
The dimensions of the endomorphism algebras $D_j$ of each simple abelian variety $\mathcal{A}_j$ appearing in the decomposition of $\Jac(X_{\ell})$ satisfy the following inequality:
\[ \dim_\Q D_j \geq \ell-1. \]
\end{lemma}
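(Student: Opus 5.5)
The idea is to find a copy of the cyclotomic field $\Q(\zeta_\ell)$, where $\zeta_\ell$ denotes a primitive $\ell$-th root of unity, inside the center of $D_j$. The generator $c$ of $Z(H_\ell)$ supplies it. Since $[\Q(\zeta_\ell):\Q]=\ell-1$ and $D_j$ contains its center, the bound $\dim_\Q D_j\geq \ell-1$ then follows at once.

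First, fix $j$ and let $I_j\subseteq \Jac(X_\ell)$ be the abelian subvariety from the proof of Lemma \ref{lemma:wolfart_14} corresponding to $\mathcal{A}_j^{k_j}$. This is the $H_\ell$-invariant piece whose space of holomorphic differentials is $V_j$. The automorphism $c$ of $X_\ell$ induces an automorphism of $\Jac(X_\ell)$ preserving $I_j$. Its restriction gives an element
\[ e \in \mathrm{End}^0(I_j)\cong \mathrm{End}^0(\mathcal{A}_j^{k_j})\cong M_{k_j}(D_j). \]
By Proposition \ref{prop:module_decomp} and the discussion following Lemma \ref{lemma:wolfart_14}, $c$ acts on $V_j$ as multiplication by the scalar $\zeta^j$. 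Dually, the analytic representation of $e$ on the tangent space $T_0 I_j\cong V_j^*$ is also a scalar. Since $\ell\nmid j$, this scalar $\zeta^{\pm j}$ is a primitive $\ell$-th root of unity.

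Second, I show that $e$ is central in $\mathrm{End}^0(I_j)$. Let $\varphi\in \mathrm{End}^0(I_j)$ be arbitrary. The analytic representation $\rho_a:\mathrm{End}^0(I_j)\to \mathrm{End}_\mathbb{C}(T_0I_j)$ is an injective ring homomorphism, since an endomorphism of a complex torus is determined by its lift to the universal cover. As $\rho_a(e)$ is a scalar, it commutes with $\rho_a(\varphi)$, and injectivity gives $e\varphi=\varphi e$. Hence $e$ lies in the center of $M_{k_j}(D_j)$, which equals the center $Z(D_j)$, embedded as scalar matrices.

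Third, I identify $\Q(e)$. We have $e^\ell=1$, and $e\neq 1$ because $\rho_a(e)$ is a nontrivial root of unity. Moreover $\Phi_\ell(e)$, with $\Phi_\ell$ the $\ell$-th cyclotomic polynomial, is an element of $Z(D_j)$ whose analytic representation is $\Phi_\ell(\zeta^{\pm j})=0$. By injectivity of $\rho_a$, $\Phi_\ell(e)=0$. Since $\Phi_\ell$ is irreducible over $\Q$ and $Z(D_j)$ is a field (as $D_j$ is a division algebra), $\Q(e)\cong\Q(\zeta_\ell)$ is a subfield of $Z(D_j)$. Therefore
\[ \dim_\Q D_j\;\geq\;[Z(D_j):\Q]\;\geq\;[\Q(\zeta_\ell):\Q]=\ell-1. \]

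The main obstacle is the second step: passing from "$c$ acts as a scalar on $V_j$" to "$e$ is central in the full endomorphism algebra". This requires two facts. One is that $V_j$ is exactly the space of differentials of the isotypic subvariety $I_j$, which Lemma \ref{lemma:wolfart_14} and Corollary \ref{cor:matching_decomps} provide. The other is the faithfulness of the analytic representation. I would state both explicitly rather than rely only on the commutation of $c$ with the image of $H_\ell$, since that alone would not yield centrality in $M_{k_j}(D_j)$.
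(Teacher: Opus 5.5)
You are following the paper's own route: take the central element $c$, say it acts on $\mathcal{A}_j^{k_j}$ as a scalar, and deduce that $\Q(\zeta)\hookrightarrow D_j$. Your Steps 2 and 3 are a more careful version of what the paper writes. The paper asserts a map $\Q[\langle c\rangle]\to D_j$ although $c$ lives in $M_{k_j}(D_j)$, and it writes $\Q[\langle c\rangle]\cong\Q(\zeta)$ where it should be $\Q\times\Q(\zeta)$. Your faithfulness and centrality argument repairs both points.

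\textbf{The gap.} The problem is the input you take from Lemma \ref{lemma:wolfart_14} and Corollary \ref{cor:matching_decomps}. You need a $c$-stable abelian subvariety $I_j$ with $T_0I_j\cong V_j^*$, so that $\rho_a(e)$ is a scalar primitive $\ell$-th root of unity $\lambda$. For $\ell\geq 5$ no nonzero abelian variety has such an endomorphism. Since $\rho_r(e)\otimes\C\cong\rho_a(e)\oplus\overline{\rho_a(e)}$, the characteristic polynomial of $\rho_r(e)$ on $H_1(I_j,\Q)$ would be $\bigl((T-\lambda)(T-\bar\lambda)\bigr)^{\dim I_j}$. This polynomial lies in $\Q[T]$ and has $\lambda$ as a root, so $\Phi_\ell$ divides it. That forces all $\ell-1>2$ primitive $\ell$-th roots of unity to lie in $\{\lambda,\bar\lambda\}$, which is impossible. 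Equivalently, a field $K\subseteq\mathrm{End}^0(A)$ acts on $T_0A$ with multiplicities satisfying $n_\sigma+n_{\bar\sigma}=2\dim A/[K:\Q]>0$ for every embedding $\sigma$. So $\Q(\zeta_\ell)$ can never act through a single embedding. $V_j$ is an isotypic component over $\overline{\Q}$, but it is not the space of differentials of any abelian subvariety. Your Steps 2 and 3 are valid deductions, but from a premise that cannot hold. The paper's proof rests on the same premise.

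\textbf{What actually holds.} $H_1(X_\ell,\Q)$ sees the whole Galois orbit $\{\chi_{0,j,0}\}_{1\le j\le\ell-1}$ at once. The abelian subvariety cut out by the corresponding central idempotent of $\Q[H_\ell]$ is the full complement $P=\ker(1+c+\cdots+c^{\ell-1})^\circ$ of $\Jac(Y_\ell)$. Its tangent space is $\bigoplus_{j\geq 1}V_j^*$, on which $c$ is not a scalar, so your centrality step breaks down there. The simple factor $M_\ell(\Q(\zeta))$ of $\Q[H_\ell]$ gives $P\sim B^\ell$ with $\Q(\zeta)\hookrightarrow\mathrm{End}^0(B)$. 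However, for a simple factor $\mathcal{A}$ occurring in $B$ with multiplicity $m$, this only yields $\Q(\zeta)\hookrightarrow M_m(\mathrm{End}^0(\mathcal{A}))$. That forces $\dim_\Q\mathrm{End}^0(\mathcal{A})\geq\ell-1$ when $m=1$ but not in general; for instance $\Q(\zeta_5)$ embeds in $M_2(\Q(\sqrt5))$. The missing ingredient is an argument placing $\Q(\zeta_\ell)$ inside the endomorphism algebra of each simple factor that actually occurs. Neither your proof nor the paper's provides it.
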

\begin{proof}
The central generator $c=[\alpha,\beta]$ of order $\ell$ in $H_\ell$ 
acts diagonally by $\zeta^j$ on $V_j$. By duality, since 
$\Jac(X_\ell) \cong H^0(X_\ell,\Omega_{\mathbb{C}})^*/H_1(X_\ell,\mathbb{Z})$ 
when viewed over $\mathbb{C}$, the element $c$ induces an endomorphism 
on $\mathcal{A}_j^{k_j}$ that acts as scalar multiplication by 
$\zeta^{-j}$. Here we use the fact that we can extend our scalars to $\mathbb{C}$ and then descend to $\overline{\Q}$, since $X_{\ell}$ 
is defined over $\overline{\Q}$. 
Thus, we have a non-trivial algebra homomorphism
\[\mathbb{Q}[\langle c \rangle] \longrightarrow D_j,\] which is 
necessarily injective since $ \mathbb{Q}[\langle c \rangle] \cong \mathbb{Q}(\zeta)$. Therefore,
\[\dim_\mathbb{Q} D_j = [D_j : \mathbb{Q}(\zeta)]\cdot  
[\mathbb{Q}(\zeta): \mathbb{Q} ] =  [D_j : \mathbb{Q}(\zeta)] \cdot (\ell-1),\] from which the result follows.
\end{proof}
\begin{lemma}\label{lemma:inv_system}
The Heisenberg curves $X_{\ell^n}$ for $n\geq 1$ form an inverse system 
of branched covers of $\mathbb{P}^1_{\overline{\Q}}$ which are étale 
outside $0,1,\infty$.
\end{lemma}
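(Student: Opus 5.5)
To prove Lemma \ref{lemma:inv_system}, I will exhibit, for each $n\geq 1$, a surjective morphism $X_{\ell^{n+1}}\rightarrow X_{\ell^n}$ compatible with the Belyi maps to $\mathbb{P}^1$. The cleanest route is group-theoretic, via the uniformization of \S\ref{sec:heis_curves}. The curve $X_{\ell^n}$ corresponds to the kernel $N_n$ of the composite surjection
\[\pi_1(\mathbb{P}^1_{\mathbb{C}}-\{0,1,\infty\}) \cong F_2 \longrightarrow H_{\ell^n}, \qquad l_\alpha, l_\beta \mapsto \alpha,\beta.\]
Equivalently, $N_n$ is the normal closure of $l_\alpha^{\ell^n}$, $l_\beta^{\ell^n}$ and the third term $F_2(3)$ of the lower central series, as recalled in \S\ref{sec:periods}.

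\textbf{Compatibility of the quotients.} The reduction map $H_{\ell^{n+1}}\rightarrow H_{\ell^n}$ (entrywise modulo $\ell^n$) is a surjective homomorphism sending $\alpha\mapsto\alpha$ and $\beta\mapsto\beta$. Hence the two surjections from $F_2$ commute with it, and $N_{n+1}\subseteq N_n$. By the Galois correspondence for topological covers, this inclusion yields a covering map $X^\circ_{\ell^{n+1}}\rightarrow X^\circ_{\ell^n}$ of the open curves over $\mathbb{P}^1-\{0,1,\infty\}$. These maps are unramified, and they compose correctly because the reduction maps compose. Since $\mathbb{P}^1-\{0,1,\infty\}$ is covered only over the open set, every cover in the system is étale there; that this is the only possible ramification locus is exactly Proposition \ref{prop:Gal_H_n}.

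\textbf{Passing to projective curves over $\overline{\Q}$.} By the Riemann Existence theorem, each open cover extends uniquely to a finite morphism of smooth projective curves. The connecting maps extend as well, because a morphism of smooth curves extends uniquely across the punctures. Belyi's theorem then places everything over $\overline{\Q}$: morphisms between covers of $\mathbb{P}^1$ étale outside three points descend to $\overline{\Q}$, since the category of such covers is equivalent over $\overline{\Q}$ and over $\mathbb{C}$.

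\textbf{Explicit check via function fields.} Alternatively, one can verify $\mathcal{R}_{\ell^n}\subseteq \mathcal{R}_{\ell^{n+1}}$ directly. Put $x'=t^{1/\ell^{n+1}}$, $y'$ similarly, and let $\zeta'$ be a primitive $\ell^{n+1}$-th root of unity with $\zeta'^{\ell}=\zeta$. Then $x=x'^{\ell}$ and $y=y'^{\ell}$. One also wants $\varepsilon_n$ expressed inside $\mathcal{R}_{\ell^{n+1}}$ up to an element of $\mathcal{K}_{\ell^n}^{\times}$. For this I would compare the Kummer classes: the relevant identity is
\[\prod_{i}(1-\zeta^i x)^i \quad\text{versus}\quad \Bigl(\prod_{k}(1-\zeta'^k x')^k\Bigr)^{\ell},\]
regrouped using $1-\zeta^i x'^{\ell}=\prod_{m}(1-\zeta'^{i+m\ell^n}x')$. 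This computation is messy, and it is the main obstacle if one insists on explicit models. I would therefore rely on the group-theoretic argument above, which combined with the uniqueness of $X_{\ell^n}$ up to isomorphism from \S\ref{sec:heis_curves} suffices.
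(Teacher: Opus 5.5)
Your argument is correct, but it is not the paper's proof. Your main line (the kernels satisfy $N_{n+1}\subseteq N_n$ because $F_2\to H_{\ell^{n}}$ factors through $H_{\ell^{n+1}}$, then Galois correspondence, Riemann existence and descent) is essentially the alternative the paper sketches right after its proof. The paper's actual proof is the explicit Kummer computation you set aside as messy, and it is in fact short.

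\textbf{The paper's computation.} Write $i=i_0+\lambda\ell^n$ with $0\le i_0<\ell^n$ and $0\le\lambda<\ell$, and take $\zeta'^{\ell}=\zeta$. Then $\prod_{\lambda=0}^{\ell-1}(1-\zeta'^{i_0+\lambda\ell^n}x')=1-\zeta^{i_0}x$. This gives $\varepsilon_{n+1}^{\ell^{n+1}}=\varepsilon_n^{\ell^n}w^{\ell^n}$ with $w=\prod_{i_0,\lambda}(1-\zeta'^{i_0+\lambda\ell^n}x')^{\lambda}\in\mathcal{K}_{\ell^{n+1}}$. Hence $\varepsilon_{n+1}^{\ell}=\eta\,\varepsilon_n w$ for some root of unity $\eta$, so $\varepsilon_n\in\mathcal{R}_{\ell^{n+1}}$.

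\textbf{Your set-up of that check.} It is slightly off. The relevant field is $\mathcal{K}_{\ell^{n+1}}$, not $\mathcal{K}_{\ell^n}$. What is needed is that $\varepsilon_{n+1}^{\ell^{n+1}}/\varepsilon_n^{\ell^n}$ is an $\ell^n$-th power in $\mathcal{K}_{\ell^{n+1}}$; comparing $\prod_i(1-\zeta^ix)^i$ with the $\ell$-th power of $\prod_k(1-\zeta'^kx')^k$ is not the right test.

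\textbf{What each route buys.}
\begin{itemize}
\item The function-field route gives literal inclusions $\mathcal{R}_{\ell}\subseteq\mathcal{R}_{\ell^2}\subseteq\cdots$ of the very models used throughout the paper. It works entirely over $\overline{\Q}$ and needs no Riemann existence, no extension across punctures, no descent from $\mathbb{C}$, and no appeal to uniqueness of the Heisenberg curve.
\item Your route is more conceptual and makes compatibility of the transition maps automatic. However, it only yields the system up to the isomorphisms supplied by uniqueness. Also, the descent step you need is the comparison of étale covers of $\mathbb{P}^1-\{0,1,\infty\}$ over $\overline{\Q}$ and over $\mathbb{C}$ (Grothendieck/Weil), not Belyi's theorem proper.
\end{itemize}
For the use in Theorem \ref{thm:main}, which only requires some finite morphism $X_{\ell^n}\to X_\ell$, either approach suffices.
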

\begin{proof}
Recall that $\overline{\Q}(t)$ is the projective $t$-line and set $\zeta_{\ell^n}$ a primitive $\ell^n$-root of unity, as well as 
$x_n = t^{1/\ell^n}, y_n = (1-t)^{\ell^n}$ and 
$\varepsilon_n$ satisfying 
\[ \varepsilon_n^{\ell^n} = \prod\limits_{i=1}^{\ell^n -1} 
(1-\zeta_{\ell^n} x_n)^i. \] Then $\mathcal{K}_{\ell^n}=\overline{\Q}(x_n,y_n)$ 
is the function field of the Fermat curve $Y_{\ell^n}$ and it is obvious 
that $\mathcal{K}_{\ell^n} \subseteq \mathcal{K}_{\ell^{n+1}}$. 
Then, $\mathcal{R}_{\ell^n} = \mathcal{K}_{\ell^n}(\varepsilon_n)$ is the 
function field of the Heisenberg curve $X_{\ell^n}$. Observe that
\begin{align*}
\varepsilon^{\ell^{n+1}}_{n+1} & = 
\prod\limits_{i=1}^{\ell^{n+1}-1}( 1 - \zeta_{\ell^{n+1}}^i x_{n+1})^i \\ 
&= \prod\limits_{i=0}^{\ell^n-1} \left( \prod\limits_{\lambda=0}^{\ell-1} ( 1 - \zeta_{\ell^{n+1}}^{i+\lambda \ell^n} x_{n+1}) \right)^i \cdot 
\prod\limits_{i=0}^{\ell^n-1}\prod\limits_{\lambda=0}^{\ell-1} ( 1 - \zeta_{\ell^{n+1}}^{i+\lambda \ell^n} x_{n+1})^{\lambda \ell^n} \\
&= \varepsilon_{n}^{\ell^n} \cdot w^{\ell^n}, 
\end{align*}
where $w$ is an element in $\mathcal{K}_{\ell^{n+1}}$. Therefore 
$\varepsilon_{n+1}^{\ell} = \varepsilon_n \cdot w$ which implies 
$\mathcal{R}_{\ell^n}\subseteq \mathcal{R}_{\ell^{n+1}}$. Now from the 
anti-equivalence between function fields of transendence degree $1$ and non-singular projective 
curves, both over $\overline{\Q}$, as well as the tower of function fields 
$\overline{\Q}(t)\subseteq \mathcal{R}_\ell \subseteq \mathcal{R}_{\ell^2} \subseteq \mathcal{R}_{\ell^3} 
\subseteq \cdots $, we have the inverse system of morphisms 
$X_{\ell^m}\rightarrow X_{\ell^n}$ for $m \geq n \geq 1$, which 
are also compatible as covers of $\mathbb{P}^1_{\overline{\Q}}-\{0,1,\infty\}$ if the appropriate preimages are removed.
\end{proof}
Alternatively, this fact can also be seen via the pro-$\ell$ étale fundamental group of 
$\mathbb{P}^1_{\overline{\Q}}-\{0,1,\infty\}$. Denote by $\pi_1$ and $\pi_1^{\et}(\ell)$ its 
topological fundamental group (over $\mathbb{C}$ with the complex topology ) 
and its pro-$\ell$ étale fundamental group respectively. For any choice of coordinates 
$x,y,z$ as homotopy classes of loops around the branch points ${0,1,\infty}$, we have that 
$\pi_1 = \langle x,y,z | xyz=1 \rangle$ and $\pi_1^{\et}(\ell)$ is its pro-$\ell$ completion. 
We have the kernel $N_n$ of $\pi_1\rightarrow H_{\ell^n}$, where $N_n$ is 
normally generated by the elements 
$x^{\ell^n},y^{\ell^n}, [x,[x,y]], [y,[x,y]]$. Then, the pro-$\ell$ completion 
$\hat{N}_n$ is the kernel of 
$\pi_1^{\et}(\ell)\rightarrow H_{\ell^n}$. Now $\hat{N}_n$ is the pro-$\ell$ étale 
fundamental group of the Heisenberg curve $X_{\ell^n}$ minus the points lying above 
$0,1,\infty$. It is easy to see that $N_{n+1}\subset N_{n}$, which yields 
$\hat{N}_{n+1}\subset \hat{N}_{n}$ by left exactness of the profinite functor, 
since $N_{n+1}$ is of finite index in $N_n$. Thus, we 
get the morphisms $X_{\ell^m}\rightarrow X_{\ell^n}, m\geq n$ by Galois correspondence.

\begin{theorem}\label{thm:main}
Let $X_{\ell^n}$ be the $\ell^n$-level Heisenberg curve, for a prime 
$\ell > 3$. Then $\Jac(X_{\ell^n})$ is \textbf{not} of CM-type.
\end{theorem}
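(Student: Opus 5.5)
First reduce to the base level $n=1$. By Lemma \ref{lemma:inv_system} there is a finite morphism $X_{\ell^n}\to X_\ell$ for every $n\ge 1$. So by Lemma \ref{lemma:subjac}, if $\Jac(X_{\ell^n})$ were of CM-type then so would be $\Jac(X_\ell)$. It therefore suffices to show that $\Jac(X_\ell)$ is not of CM-type. By Corollary \ref{cor:matching_decomps} and Lemma \ref{lemma:isogenous}, it is enough to exhibit one index $j\in\{1,\dots,\ell-1\}$ for which the simple factor $\mathcal{A}_j$ is not of CM-type. In fact I will argue that every $\mathcal{A}_j$ fails to be CM, by contradiction.

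Assume $\mathcal{A}_j$ is of CM-type. Then $D_j=\mathrm{End}^0(\mathcal{A}_j)$ is a CM field $\mathbb{K}$ with $[\mathbb{K}:\Q]=2\dim\mathcal{A}_j$. Since $D_j$ is commutative, it is its own center, so $q_j=1$. Inequality (\ref{eq:ineq2}) then gives $k_j\ge \ell$.

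Next compute $\dim \mathcal{A}_j$ from the size of $V_j$. On one side, Lemma \ref{lemma:wolfart_14} and Corollary \ref{cor:matching_decomps} identify $H^0(\mathcal{A}_j^{k_j},\Omega_{\overline{\Q}})$ with the isotypic component $V_j$. On the other side, $\dim V_j = \tfrac{\ell-3}{2}\cdot \ell$, either from the character $\chi_{V_j}=\tfrac{\ell-3}{2}\chi_{0,j,0}$ or directly from (\ref{eq:lEj}), since $g_{\mathcal{K}_\ell}-1=\ell(\ell-3)/2$. Hence
\[ k_j\dim\mathcal{A}_j=\frac{\ell(\ell-3)}{2}. \]
Combined with $k_j\ge\ell$, this yields $\dim \mathcal{A}_j\le (\ell-3)/2$.

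Lemma \ref{lemma:ineq_dj} now gives the contradiction. It says $\dim_\Q D_j\ge \ell-1$; more precisely, $\Q(\zeta)$ embeds in $D_j=\mathbb{K}$, so $\ell-1$ divides $2\dim\mathcal{A}_j$. In particular
\[ 2\dim\mathcal{A}_j=[\mathbb{K}:\Q]\ge \ell-1, \]
i.e. $\dim\mathcal{A}_j\ge (\ell-1)/2 > (\ell-3)/2$, which is impossible. The hypothesis $\ell>3$ is needed so that $V_j\neq 0$ (Remark \ref{remark:ell_three}) and the $\mathcal{A}_j$ actually occur.

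The step I expect to need the most care is the identification of $V_j$ exactly with $H^0(\mathcal{A}_j^{k_j})$. One must check that the whole isotypic component $V_j$ corresponds to a single isogeny class $\mathcal{A}_j$, and not to several non-isogenous simple factors sharing the character $\chi_{0,j,0}$. If it corresponded to several factors, the counting would become $\sum_\nu k_\nu\dim\mathcal{A}_\nu=\ell(\ell-3)/2$. In that case I would run the same argument for each factor separately: each has $k_\nu\ge\ell$ by (\ref{eq:ineq1}), since any irreducible subspace inside its block is a copy of $\chi_{0,j,0}$, of dimension $\ell$. The bound $\dim\mathcal{A}_\nu\le(\ell-3)/2$ then still holds factor by factor, while the central element $c$ still acts on each such factor through $\Q(\zeta)$. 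So the contradiction persists.
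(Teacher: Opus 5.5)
Your argument reproduces the paper's proof step for step: reduction to $n=1$, then $q_j=1\Rightarrow k_j\ge\ell$, then $k_j\dim\mathcal{A}_j=\dim V_j$, then Lemma~\ref{lemma:ineq_dj}. It therefore inherits a genuine gap. The step you flagged is not merely delicate; it is false for $\ell\ge5$, and your fallback handles the opposite of what actually happens.

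Let $P$ be the complement of $\Jac(Y_\ell)$ in $\Jac(X_\ell)$, and let $B\subseteq P$ be any nonzero $c$-stable abelian subvariety. Every isotypic block $\mathcal{A}_\nu^{k_\nu}$ of $P$ is such a $B$, since $c\circ f$ is again a homomorphism $\mathcal{A}_\nu\to P$. Because $c$ has no fixed vectors on $H_1(P,\Q)$, $H_1(B,\Q)$ is a $\Q(\zeta)$-vector space. Hence every primitive $\ell$-th root of unity is an eigenvalue of $c$ on $H^1(B,\mathbb{C})=H^0(B,\Omega)\oplus\overline{H^0(B,\Omega)}$, all with the same positive multiplicity. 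Complex conjugation swaps the $\zeta^{j}$- and $\zeta^{-j}$-eigenspaces. So the subspace of $H^0(X_\ell,\Omega)$ corresponding to $H^0(B,\Omega)$ meets $V_j$ or $V_{\ell-j}$ nontrivially for every $j$, i.e.\ it meets at least $(\ell-1)/2\ge2$ of the $V_j$. No block has its differentials inside a single $V_j$. Thus the matching in Corollary~\ref{cor:matching_decomps} is false, as is the claim after Lemma~\ref{lemma:wolfart_14} that blocks are $H_\ell$-isotypic. The only valid count is $\sum_\nu k_\nu\dim\mathcal{A}_\nu=\ell(\ell-1)(\ell-3)/2$. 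This gives no contradiction even if you grant $\dim\mathcal{A}_\nu\ge(\ell-1)/2$: a single block with $\dim\mathcal{A}=(\ell-1)/2$ and $k=\ell(\ell-3)\ge\ell$ fits.

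The lower bound is also unjustified. Since $c$ is an automorphism of the block, it gives $\Q(\zeta)\hookrightarrow\mathrm{End}^0(\mathcal{A}_\nu^{k_\nu})=M_{k_\nu}(D_\nu)$, not $\Q(\zeta)\hookrightarrow D_\nu$. The latter would require $c$ to act as a scalar, which is the impossible matching again.

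In fact the data your argument uses cannot produce a contradiction at all. Let $E$ have CM by an imaginary quadratic field $K$. Set $Q'=(E\otimes_{\Z}\Z[\zeta])^{(\ell-3)/2}$ and $P'=(Q')^{\ell}$. Let $H_\ell$ act through its $\ell$-dimensional representation induced from $\langle\alpha,c\rangle$, realized by monomial matrices in $\mathrm{GL}_\ell(\Z[\zeta])$. Then $P'\sim E^{\ell(\ell-1)(\ell-3)/2}$ is of CM-type with $\dim_\Q K=2<\ell-1$. At the same time $H^0(P',\Omega)\cong\bigoplus_{j=1}^{\ell-1}\frac{\ell-3}{2}\chi_{0,j,0}$ and $H_1(P',\mathbb{C})\cong\bigoplus_{j=1}^{\ell-1}(\ell-3)\chi_{0,j,0}$, exactly as for $P$. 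Wolfart's inequality holds with $q=1$ and $k=\ell(\ell-1)(\ell-3)/2\ge\ell$.

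So Chevalley--Weil multiplicities, Wolfart's inequalities and the Albert classification alone cannot exclude CM here. Some arithmetic input specific to $X_\ell$ is needed. Neither your proposal nor the paper's proof supplies it, since the paper rests on the same Corollary~\ref{cor:matching_decomps} and Lemma~\ref{lemma:ineq_dj}.
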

\begin{proof}
By the combination of Lemmas \ref{lemma:isogenous}, 
\ref{lemma:subjac} and \ref{lemma:inv_system}, it suffices to prove the non CM-type only for the base case $n=1$.

We will thus prove it for $\Jac(X_{\ell})$. It is well-known that Fermat Jacobians are of CM-type \cite{MR511556}, 
so we have to work with the simple abelian varieties $\mathcal{A}_j$ from Corollary 
\ref{cor:matching_decomps}. Assume the contrary that $\Jac(X_{\ell})$ is of CM-type, 
which means every $\mathcal{A}_j$ is by Lemma \ref{lemma:isogenous}. That is the endomorphism 
algebra $D_j$ of $\mathcal{A}_j$ is a CM-field of dimension $2 \dim_\mathbb{C} \mathcal{A}_j$ 
over $\Q$. Consequently, it has dimension $q_j=1$ over its center. Thus, inequality (\ref{eq:ineq2}) becomes 
\[\ell \leq k_j, \quad j=1,2,\ldots,\ell-1,\] and furthermore, we have 
\[\dim_{\mathbb{C}}\mathcal{A}_j^{k_j} = k_j \dim_{\mathbb{C}}\mathcal{A}_j = 
\dim_{\overline{\Q}} V_j = \frac{\ell(\ell-3)}{2}, \quad \ell > 3, \] as the dimensions are carried over in the construction of invariant subspaces from Lemma \ref{lemma:wolfart_14}. By the CM-type assumption, we have that $2\dim_\mathbb{C} \mathcal{A}_j = \dim_\Q D_j$ and thus by Lemma \ref{lemma:ineq_dj} 
\[ \frac{(\ell -1)}{2} \leq \dim_{\mathbb{C}} \mathcal{A}_j. \] Combining everything, we have
\[ \frac{(\ell -1)}{2} \leq \dim_{\mathbb{C}} \mathcal{A}_j \leq \frac{(\ell-3)}{2},\] which is clearly a contradiction. 
In fact, we have proven that none of the abelian varieties $\mathcal{A}_j$ is of CM-type, and the theorem follows.
\end{proof}

The takeaway of this theorem is that, even though the Heisenberg curves $X_{\ell^n}$ are curves 
with many automorphisms, the representation theory of the automorphism subgroup $H_{\ell^n}$ dictates the decomposition of 
$\Jac(X_{\ell^n})$ into simple blocks whose endomorphism algebras contain the CM-field $\Q(\zeta)$, 
but are not CM-fields themselves. The notion of simple abelian varieties over $\mathbb{C}$ whose endomorphism algebra contains a CM-field appears as {\em generalized complex multiplication} in the work of Shiga and Wolfart \cite{WolfartShiga}.
%
%of dimension large enough to admit 
%a $\Q(\zeta)$ action, but not large enough to be able to admit a CM action with respect to their entire 
%dimension. This phenomenon appears as {\em generalized complex multiplication} in the work of Shiga and Wolfart \cite{WolfartShiga}.

\subsection{The \texorpdfstring{$\ell=3$}{} case.} \label{subsec:ell_three}
By Remark \ref{remark:ell_three} the abelian varieties $\mathcal{A}_j$ do not exist for $\ell=3$. 
In fact, the Heisenberg curve $X_3$ is an elliptic curve isogenous to the Fermat curve 
$Y_3$, as discussed in \S 2, which implies $\Jac(X_3)$ is of CM-type. 

So it is a question whether CM-type Jacobians are to be found on the rest of the $\ell$-tower. We can 
replicate the previous arguments specifically for $X_9$ to show that actually $\Jac(X_9)$ 
is not of CM-type. Indeed, denote by $\mathcal{A}$ a simple abelian variety from its 
decomposition, with endomorphism algebra $D$ and multiplicity $k$. We have that 
$D$ contains $\Q(\zeta_9)$, for $\zeta_9$ a primitive $9$-th root of unity, which is of dimension $6$ over $\Q$. Thus $6 \leq \dim_\Q D$. Suppose that $\mathcal{A}^k$ corresponds to the isotypic component of the 
irreducible representation with corresponding character $\chi_{1,3,1}$, i.e. $i=s=1$ and $j=3$, from the 
space of holomorphic differentials. The formula of Chevalley-Weil (\ref{eq:chev_weil2}) in this case yields the multiplicty $2$. 
Since the character $\chi_{1,3,1}$ is of dimension 
$3$, we have $\dim_{\mathbb{C}} \mathcal{A}^k = 6$ and Wolfart's inequality (\ref{eq:ineq1}) yields $3\leq kq$, 
for $q^2$ the dimension of $D$ over its center. These are enough to prove that $\mathcal{A}$ is not of CM-type, since if we are to 
assume otherwise we get the contradiction $3 \leq \dim_{\mathbb{C}} \mathcal{A} \leq 2$. This finishes the claim that $\Jac(X_9)$ is not of CM-type and we have proved the following complementary theorem.
\begin{theorem}\label{thm:complement}
Let $X_{3^n}$ be the $3^n$-level Heisenberg curve, for an integer $n\geq2$. Then $\Jac(X_{3^n})$ is \textbf{not} of CM-type.
\end{theorem}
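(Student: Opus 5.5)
The plan is to reduce to the base case $n=2$, exactly as in the proof of Theorem \ref{thm:main}. By Lemma \ref{lemma:inv_system} there is a finite morphism $X_{3^n}\rightarrow X_9$ for every $n\geq 2$. By Lemma \ref{lemma:subjac}, if $\Jac(X_{3^n})$ were of CM-type then so would be $\Jac(X_9)$. So it suffices to show that $\Jac(X_9)$ is not of CM-type, and for this it is enough to exhibit one simple isogeny factor $\mathcal{A}$ that is not of CM-type. By Poincaré reducibility and Lemma \ref{lemma:isogenous}, CM-type means that every simple factor is of CM-type.

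To find the factor, I would take the irreducible character $\chi_{1,3,1}$ of $H_9$. Here $j=3$, $d_j=3$, so its dimension is $9/3=3$, and $c$ acts on it by the scalar $\zeta_9^3$. First, I would evaluate the Chevalley--Weil formula (\ref{eq:chev_weil2}) with $\ell^n=9$, $i=s=1$, $j=3$. The representative of $-(i+s)=-2$ modulo $3$ is $1$, and none of $1,1,2$ is divisible by $3$. This gives
\[\frac{9+9-2(2+1)}{6}=2.\]
So the isotypic component has $\overline{\Q}$-dimension $6$. By Lemma \ref{lemma:wolfart_14}, this component is $H^0(\mathcal{A}^k,\Omega_{\overline{\Q}})$ for a single simple $\mathcal{A}$ of multiplicity $k$, with $k\dim_\mathbb{C}\mathcal{A}=6$.

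Two lower bounds then follow. Wolfart's inequality (\ref{eq:ineq1}) applied to an irreducible subspace of character $\chi_{1,3,1}$ gives $3\leq kq$, where $q^2=\dim_{Z(D)}D$ and $D=\mathrm{End}^0(\mathcal{A})$. Assuming CM-type, $D$ is a field, so $q=1$ and hence $k\geq 3$. That forces $\dim_\mathbb{C}\mathcal{A}\leq 2$.

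On the other side, I would imitate Lemma \ref{lemma:ineq_dj} to get a lower bound on $D$. This is the step I expect to be the main obstacle. The central element $c$ acts on this component by the scalar $\zeta_9^3$, a primitive cube root of unity. So the naive argument only embeds $\Q(\zeta_3)$ into $D$, which gives $\dim_\Q D\geq 2$ and is not enough. The paper's sketch claims instead that $\Q(\zeta_9)\subseteq D$. To justify this, I would look for a different element of $\Q[H_9]$ that acts on the $\chi_{1,3,1}$-isotypic part with minimal polynomial $\Phi_9$. A natural candidate is $\alpha$: its restriction to $\chi_{1,3,1}$ has character values $3\zeta_9^{\mu}$ only for $3\mid\mu$, so its eigenvalues are the three $9$th roots $\zeta_9^{i+3t}$, which are primitive since $i=1$. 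I would then use the fact that the isotypic idempotent $e_{\chi}\in\overline{\Q}[H_9]$ together with $\alpha$ generate, inside $\mathrm{End}^0(\mathcal{A}^k)\cong M_k(D)$, a commutative subalgebra containing $\Q(\zeta_9)$. After Galois-averaging over the conjugates of $\chi_{1,3,1}$ to work over $\Q$, this gives a field of degree $6$ acting faithfully.

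For a CM field $D$ acting on $\mathcal{A}^k$ this only bounds $k\cdot\dim_\Q D$ rather than $\dim_\Q D$, so the passage to $D$ itself is exactly the delicate point. I would try to circumvent it by a centralizer argument: the commutant of the $H_9$-action on the isotypic piece is $M_2$ over $\overline{\Q}$, so it cannot absorb all of $\Q(\zeta_9)$. If this works, then CM-type gives $2\dim_\mathbb{C}\mathcal{A}=\dim_\Q D\geq 6$, so $\dim_\mathbb{C}\mathcal{A}\geq 3$. This contradicts $\dim_\mathbb{C}\mathcal{A}\leq 2$ and finishes the proof.
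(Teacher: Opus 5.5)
You follow the paper's argument step for step: the reduction to $X_9$, the character $\chi_{1,3,1}$ with Chevalley--Weil multiplicity $2$, and Wolfart's bound $3\le kq$. You have also correctly located its weak point. The paper simply asserts $\Q(\zeta_9)\subseteq D$, and, as you note, the central element $c$ only gives $\Q(\zeta_3)$. Your repair does not close this gap.

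The character $\chi_{1,3,1}$ is $\Q(\zeta_3)$-valued: its nonzero values are $3\zeta_9^{m+3\lambda+\mu}$ with $3\mid m,\mu$. Its only Galois conjugate is $\overline{\chi_{1,3,1}}=\chi_{2,6,2}$, and its Schur index is $1$ since $H_9$ is an odd $p$-group. Hence all of $\Q[H_9]$ acts on the $H_9$-stable abelian subvariety $B$ cut out by the rational idempotent $e_{\chi_{1,3,1}}+e_{\chi_{2,6,2}}$ through the simple algebra $M_3(\Q(\zeta_3))$. So $B\sim E^3$ with $\Q(\zeta_3)\subseteq\mathrm{End}^0(E)$, and that is all the $H_9$-action tells you about $E$.

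Inside $M_3(\Q(\zeta_3))$ your element $\alpha$ satisfies $\alpha^3=c$, and $c$ is the central $\zeta_3$. So $\alpha$ generates $\Q(\zeta_9)$ only as a \emph{non-central} maximal subfield of $M_3(\Q(\zeta_3))\subseteq M_3(\mathrm{End}^0(E))$, which imposes nothing on $D$. The commutant remark does not help either. First, $\alpha$ is not in the commutant of $H_9$. Second, the $\overline{\Q}$-linear commutant $M_2(\overline{\Q})$ of the action on differentials places no constraint on the $\Q$-algebra $D$.

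There is a second unjustified step, which the paper shares: $k\dim_{\mathbb{C}}\mathcal{A}=6$. Because $\chi_{1,3,1}$ is not rational, its isotypic piece of $H^0$ need not be the space of differentials of an abelian subvariety. By (\ref{eq:chev_weil2}), $\chi_{2,6,2}$ occurs once in $H^0(X_9,\Omega_{\mathbb{C}})$. So the differentials of $B$ have character $2\chi_{1,3,1}+\chi_{2,6,2}$, $\dim B=9$, and $E$ is a threefold on which $\Q(\zeta_3)$ acts with signature $(2,1)$.

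The numerical data you use are then consistent with CM, as two configurations show.
\begin{itemize}
\item Take $E\sim E_0^3$, with $E_0$ the elliptic curve of $j$-invariant $0$, and $\Q(\zeta_3)$ acting by its CM on two factors and by the conjugate on the third. Then $\mathcal{A}=E_0$, $D=\Q(\zeta_3)\not\supseteq\Q(\zeta_9)$, $q=1$ and $k\ge 9\ge 3$, so every constraint holds.
\item Take $E$ simple with CM by $\Q(\zeta_9)$ of a type inducing signature $(2,1)$; $\Jac$ of $y^3=x^4-x$ is such a threefold. Then $D\supseteq\Q(\zeta_9)$ does hold, but $k\dim\mathcal{A}\ge 9$, and you only get $3\le\dim\mathcal{A}\le 3$.
\end{itemize}
So no refinement of this counting argument can produce a contradiction. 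Closing the gap needs genuinely new information about $\mathrm{End}^0(E)$.
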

It is worth noting that, the previous argument does not work if we use the character 
$\chi_{0,1,0}$ for $j=1$ and $i=s=0$, which is of dimension $9$. In this case, 
the two inequalities would yield $\dim_\mathbb{C}\mathcal{A} = 3$ with no contradiction, 
and the corresponding simple abelian variety $\mathcal{A}$, which is not isogenous to a 
subvariety of $\Jac(Y_9)$ (these correspond to one-dimensional characters and $\chi_{0,1,0}$ is $9$-dimensional), could 
potentially be of CM-type with endomorphism algebra $\Q(\zeta_9)$.

\subsection{A question by Oort.}\label{subsec:oort}

As it appears in the literature, specifically in \cite{OortCMJacobians} and \cite[Question 2.4]{ObusShaska}, 
Oort initially hoped to be able to construct CM Jacobians from curves with many automorphisms. 
As it turned out, this is not always possible. Wolfart \cite{WolfartCMJacobians} 
classified the entire case genus $\leq 4$ case and showed there are counter-examples to Oort's question. 
Specifically, he found one counter example in genus $3$, which is a hyperelliptic curve, 
and two counterexamples in genus $4$. Wolfart also 
shows McBeath's curve \cite[Section 6.5]{WolfartCMJacobians}, which is of genus $7$, does not have a CM-type Jacobian.

Furthermore, more counter-examples were given by M\"uller and Pink in the case of hyperelliptic curves \cite{PinkMueller}. 
They classified them into $3$ infinite families with CM-type Jacobians and $15$ exceptional curves; $10$
out of which do not have CM-type Jacobians. The largest genus of their counterexamples is $30$.

Later in \cite{ObusShaska}, Obus and 
Shaska generalized this result by classifying all the superelliptic curves with many automorphisms. 
They found $68$ exceptional superelliptic curves, out of which $33$ are without CM Jacobians (including the $10$ hyperelliptic ones), 
with the largest genus being $1830$.

In all these cases the counterexamples so far appeared to be sporadic instances, 
as exceptional curves not belonging in one out of three infinite families, 
and their genus remained bounded. In the present paper, we have shown that this phenomenon is not contained 
in isolated cases. From Theorems \ref{thm:main} and 
\ref{thm:complement} we provide an infinite family of curves with many automorphisms of 
unbounded genus, as in equation (\ref{eq:genus}), with non-CM Jacobians. 
That is Heisenberg curves are curves with many 
automorphisms, by Remark \ref{remark:many_aut}, as they are normal Belyi covers.
\subsection{Remarks on Streit's CM criterion} \label{subsec:streit}

In their studies of superelliptic and hyperelliptic curves \cite{PinkMueller,ObusShaska}, Pink, Müller, Obus and Shaska
utilized the following criterion to verify the CM-type Jacobians. Let $\mathcal{C}$ 
be a curve with many automorphisms, and $G\subseteq \Aut(\mathcal{C})$ act on $H^0(\mathcal{C},\Omega_{\mathbb{C}})$ with character 
$\chi$. Let
$\chi_{\textrm{triv}}$ denote the trivial character of $G$ and let $\mathrm{Sym}^2 \chi$ be the character of the $G$-action 
on the symmetric square $\mathrm{Sym}^2H^0(\mathcal{C},\Omega_\mathbb{C})$. Then Streit's 
CM criterion \cite[Proposition 5]{StreitPeriods} is the following: If 
\[ \langle \mathrm{Sym}^2 \chi, \chi_{\textrm{triv}} \rangle_G = 0,\] then 
$\Jac(\mathcal{C})$ is of CM-type. In the same proposition, Streit provides an 
equivalent statement in terms of real, complex, and quaternionic irreducible characters.
A geometric intuition of this criterion is also stated in \cite{PinkMueller} by noting that 
$\mathrm{Sym}^2H^0(\mathcal{C},\Omega_\mathbb{C})$ is naturally isomorphic to the tangent space of 
the Siegel moduli space for the genus of $\mathcal{C}$. In practice, a useful way to compute the character 
$\mathrm{Sym}^2 \chi$ is the following 
\[ \mathrm{Sym}^2 \chi(g) = \frac{1}{2}\left(\chi(g^2) + \chi(g)^2\right).\]
Furthermore, the exhaustive classification in the hyperelliptic and superelliptic cases showed that for these curves,
 the converse of Streit's criterion also holds. Obus and Shaska consequently pose the question: what is the geometric intuition for this phenomenon?

We may add more context to the question, since the Heisenberg curves $X_{\ell^n}$--which we have proven do not have 
Jacobians of CM-type--also satisfy a similar converse to Streit's criterion. Indeed, notice that all the characters $\chi_{ijs}$ are complex 
(i.e., having Schur indicator 0), except for the trivial character $\chi_{0,0,0}$. By comparing the multiplicities given by the
Chevalley-Weil formula (\ref{eq:chev_weil2}) with the character $\chi_{\ell^n}$ from 
equation (\ref{eq:chi_elln}) on the first (co)homology group, we observe that (except for the curve $X_3$) the 
complex characters are not disjoint between the spaces of holomorphic and anti-holomorphic differentials. Thus, they fail 
to satisfy Streit's criterion \cite[Proposition 5]{StreitPeriods}. Note that we do not use the full automorphism group 
of $X_{\ell^n}$ in this argument, but this is justified by \cite[Remark 4.5]{ObusShaska}.

\section{Ihara Theory}\label{sec:ihara}

Grothendieck's 1984 {\em esquisse d'un programme} \cite{MR1483107} proposed the study of the outer action of 
$G_\Q := \Gal(\overline{\Q}/\Q)$ on the profinite completion of the fundamental group of 
$\mathbb{P}^1-\{0,1,\infty\}$. Based on this in the following years, in a series of seminal papers 
\cite{Ihara1986towers,Ihara1985-it,IharaTateTwist, 
MR1159208}, Ihara initiated the study of the Galois action on the pro-$\ell$ representation 
\[ \phi : G_\Q \longrightarrow \mathrm{Out}(\mathcal{F}), \] where $\mathrm{Out}$ denotes the outer automorphisms and $\mathcal{F}$ is the inverse limit of all 
$\ell$-power order quotients of the topological fundamental group $\pi_1(\mathbb{P}^1_\mathbb{C} - \{0,1,\infty\}, b) 
= \langle x,y,z \mid xyz=1\rangle $. This action is independent of the basepoint $b$. Regarding $\phi$, 
the natural questions one may ask, and Ihara focused on, are what is the image and the kernel? 
On the one hand, the study of $\mathrm{im}\phi$ led to the term {\em profinite braids}:
\[ \{ \sigma \in \mathrm{Aut}(\mathcal{F}): \sigma(x) \sim x^{\alpha}, \ \sigma(y) \sim y^{\alpha}, \ \sigma(z)\sim z^{\alpha}, 
\ \alpha \in \Z_{\ell}^* \}, \] where $\sim$ denotes conjugation. The above group can be seen as a generalization of 
Artin's presentation of 
the braid group as a mapping class group of a punctured closed disk \cite{FarbMagalit}. A fact which inspires 
Ihara's name in arithmetic topology \cite{MorishitaATIT,MR3912937}, see also \cite{KontogarParamFermat}. On the other hand, 
although in the profinite case the kernel is trivial as a consequence of Belyi's theorem, 
in the pro-$\ell$ case it is not trivial, and to study it, Ihara defined a descending filtration $\{ G_{\Q_{(m)}}\}_{m\geq 1}$ 
of $G_\Q$ such that $\cap_m G_{\Q_{(m)}} = \ker \phi$, which is given by the kernels of 
\[\phi_m : G_\Q \longrightarrow \mathrm{Out}(\mathcal{F}/ \mathcal{F}(m+1)),\] where $\mathcal{F}(1)=\mathcal{F}$ and 
$\mathcal{F}(m+1)=[\mathcal{F},\mathcal{F}(m)]$ is the lower central series of $\mathcal{F}$. These subgroups are closed in the 
pro-$\ell$ topology of $\mathcal{F}$ and characteristic so that the $\phi_m$ are well-defined. This filtration is of 
utmost importance, since based on $\mathrm{gr}^m G_\Q := \Gal(\Q_{(m+1)}/\Q_{(m)})$, he defined the graded object 
\[\mathfrak{g}^\ell = \bigoplus\limits_{m\geq 1} \mathrm{gr}^m G_\Q,\] and along with Deligne, conjectured that the graded Lie algebra $\mathfrak{g}^\ell \otimes_{\Z_\ell} \Q_\ell$ is freely 
generated by the Soulé characters. For more details and their definition see \cite{SharifiSouleIhara}, and \cite{BrownSurveyDeligneIhara}. Additionally, in joint paper with Anderson in 1988 \cite{AndersonIhara88}, Ihara fully 
determined 
$\ker\phi$ as a group, which lead to another question. Is its corresponding field the maximal extension of 
$\Q(\mu_{\ell}^\infty) := \cup_n \Q(\zeta_{\ell^n})$ that is unramified outside $\ell$? 
The current state of the art is that the answer is 
positive for the regular primes $\ell$, a result intimately involving the Vandiver conjecture. The work of Sharifi 
\cite{MR1935409} proves this as a
consequence of the Deligne-Ihara conjecture, which was subsequently proven by Brown in \cite{MR2993755}.

Before this theory was as understood as it is today, and even before 
explicitly determing $\ker\phi$, Ihara compared 
the profinite braids and the filtration $\Q_{(m)}$, both defined with respect to 
$\mathcal{F}$, with alternative definitions based on $\mathcal{F}^*:= \mathcal{F}/[\mathcal{N},\mathcal{N}]$, for $\mathcal{N}$ some 
normal closed subgroup of $\mathcal{F}$. The case of the Fermat tower of curves $Y_{\ell^n}$ is particularly interesting in this setting, 
as these dominate the abelian $\ell$-power coverings of $\mathbb{P}^1-\{0,1,\infty\}$ and these correspond to 
$\mathcal{N}=\mathcal{F}(2)$, in the towers of curves viewpoint of \cite{Ihara1986towers}. To bring complex multiplication into the picture, the group 
$\mathcal{N}/[\mathcal{N},\mathcal{N}]$ can be canonically identified with 
$\varprojlim T_{\ell}(\Jac(Y_{\ell^n}))$, which is the inverse limit of the Tate modules of their respective Jacobians.

Set $\mathcal{F}^* := \mathcal{F}/[\mathcal{F}(2),\mathcal{F}(2)]$ and as previously define the lower central series $\mathcal{F}^*(m)$. We have the homomorphisms 
\[\psi : G_\Q\longrightarrow \mathrm{Out}(\mathcal{F}^*),\] and 
\[\psi_m : G_\Q\longrightarrow \mathrm{Out}(\mathcal{F}^*/\mathcal{F}^*(m+1)),\]
with fields $\Q_{[m]}$ corresponding to the kernels $\ker\psi_m$. Following \cite{Ihara3point} strictly, Ihara compared the 
$1$-profinite braids (of exponent $\alpha=1$) 
of $\mathcal{F}$ and $\mathcal{F}^*$ to establish an equivalence
\[\ker \phi = \ker\psi \iff \Q_{[m]}=\Q_{(m)}, \textrm{ for all } m\geq 1, \iff \mathrm{rank}\ \mathrm{gr}^m G_\Q = c_m^\prime, \textrm{ for all } m\geq 1\]
where $c_m^\prime = 1$ if $m$ is odd and $0$ otherwise, for $m\geq 3$. His focus was to use curves with 
non-CM Jacobians in order to show that the field corresponding 
to $\ker\phi$ is a non-abelian extension of
 $\Q(\mu_{\ell^\infty})$. In contrast, the fixed field 
 $\ker\psi$ yields an abelian extension, since each intermediate extension 
 $\Q_{[m]}/\Q(\mu_{\ell^\infty})$ is abelian. In turn, this 
 would show that the two filtrations do not coincide. 

 The last paragraph of \cite{Ihara3point} reads: ``For $\ell>3$ one might use Heisenberg curves instead, but at present, the 
 author does not know whether their Jacobians do not really have enough CM''. Thus, if $K_\phi$ is the 
 field corresponding to $\ker\phi$, our Theorem \ref{thm:main} implies that the extension $K_\phi/\Q(\mu_{\ell^\infty})$ 
 is non-abelian. This non-abelian fact was, of course, already understood by Anderson and Ihara in \cite{AndersonIhara88}. 
 Their method involved ``generating'' a large family of 
 curves (including Heisenberg curves) as products of curves of genus $0$, which would be used to determine $K_\phi$. However, their method did not explicitly 
 focus on the previous question of Ihara regarding the Heisenberg Jacobians. As it appears in the literature, this 
 question remained unresolved until the 
 present paper.

\bibliography{AKGeneral.bib}
\bibliographystyle{plain}
\end{document}